\title{The 1-periodic derived category of a gentle algebra :
	Part 1 - Indecomposable objects}
\author{Joseph Winspeare}
\date{}
\begin{document}

	\maketitle
	
	\section*{Abstract}
	
	Combining results from Keller and Buchweitz, we describe the 1-periodic derived category of a finite dimensional algebra $A$ of finite global dimension as the stable category of maximal Cohen-Macaulay modules over some Gorenstein algebra $A^\ltimes$. 
	In the case of gentle algebras, using the geometric model introduced by Opper, Plamondon and Schroll, we describe indecomposable objects in this category using homotopy classes of curves on a surface. In particular, we associate a family of indecompoable objects to each primitive closed curve. We then prove using results by Bondarenko and Drozd concerning a certain matrix problem, that this constitutes a complete description of indecomposable objects. 
	\tableofcontents
	
	\section*{Introduction} 
	
	The aim of this paper is to define the 1-periodic derived category of a gentle algebra of finite global dimension and describe the isomorphism classes of indecomposable objects using the geometric model in \cite{OPS}.
	
	\noindent Gentle algebras first appear in work by Assem and Happel \cite{AH} as tilted algebras of type $A$ and in work by Assem and Skowronski \cite{AS} as tilted algebras of type $\tilde{A}$. Their module category as well as their derived category have been well studied (\cite{BR,CB},\cite{BR,CB}) using string combinatorics. 
	 
	In particular, given a gentle algebra $A$, a complete explicit description of indecomposable objects in the derived category $\Dcat^b(A)$ has been given in \cite{BM} as well as an explicit description of morphism spaces in \cite{ALP}. These descriptions are combinatorial in nature and are given in terms of the quiver and relations associated to $A$. The class of gentle algebras is closed under derived equivalences \cite{SZ} and a first derived invariant was described in \cite{AAG} using these combinatorial techniques.
	 
	These results have been translated in geometrical terms in \cite{BCS} for the module category and in \cite{OPS} for the derived category.  More precisely, one can associate to $A$ a dissected marked surface $(S,M,P,\Delta)$. A link between the derived category and the partially wrapped Fukaya category of this surface was given in \cite{HKK}. Indecomposable objects are then described by homotopy classes of graded curves, morphisms are given by oriented intersections and certain cones can be described by resolutions of crossings. This model has given rise to a complete derived invariant for gentle algebras (see \cite{LP,APS,Opp,JSW}).
	 
	In the geometric model for the derived category, the homotopy class of a closed curve on $S$ gives rise to a family of indecomposable objects if and only if it is gradable, i.e. if its winding number with regards to the dissection $\Delta$ is zero. However, all homotopy classes of closed curves play an important role as they are necessary to the computation of the derived invariant mentionned above. This motivates the construction of a category in which all homotopy classes of closed curves give rise to a family of indecomposable objects.
	
	A natural candidate for this category is the 1-periodic derived category of the gentle algebra, that we define as the triangulated hull of the orbit category $\Dcat^b(A)/[1]$ in the sense of Keller \cite{Kel}. 
	
	The main result of this paper is the following 
	
	\begin{theorem*}[Theorem 3.3.6.]
		Let $A$ be a gentle algebra of finite global dimension with associated marked surface $(S,M,\Delta)$. The isomorphism classes of indecomposable objects in $(\Dcat^b(A)/[1])_\Delta$ are described by "band objects" and "string objects", where
		\begin{itemize}
			\item "string objects" are parametrized by non trivial homotopy classes of curves on $(S,M,\Delta)$ joining two marked points where the homotopy has fixed ends.
			\item "band objects" are parametrized by non trivial homotopy classes of primitive closed curves and by isomorphism classes of indecomposable $k[x,x^{-1}]$-modules.
		\end{itemize}
	\end{theorem*}

	Note that the category $(\Dcat^b(A)/[1])_\Delta$ has also been considered in \cite{Chr} as the 1-periodic Fukaya category of the surface $S$ and a description of indecomposable objects using curves on the surface has been given. His approach is however completely different to ours : the 1-periodic Fukaya category is defined as a global section of a (co)sheaf of stable $\infty$-categories in the sense of \cite{DK}.
	
	Here, we use Keller's approach to triangulated orbit categories which can be seen as a full subcategory of the singularity category of a certain DG-algebra. 
	 
	More precisely, we study in Section 1 this triangulated orbit category when $A$ is any finite-dimensional $k$-algebra of finite global dimension. This leads us to consider the Gorenstein algebra $A^\ltimes \simeq A \otimes_k k[\varepsilon]/\langle \varepsilon^2 \rangle$ (Proposition 1.2.4.), its singularity category $\Dcat_{\textup{sg}}(A^\ltimes)$ and its stable category of maximal Cohen-Macaulay modules $\barCM(A^\ltimes)$. 
	 We then obtain the following commutative diagram (Theorem 1.3.4.) of triangulated functors.
	 
	 \begin{center}
	 	\begin{tikzpicture}[scale = 1.2]
	 		\node (1) at (0,0) {$(\Dcat^b(A)/[1])_\Delta$};
	 		\node (2) at (0,2) {$\Dcat^b(A)$};
	 		\node (3) at (4,0) {$\barCM(A^\ltimes)$};
	 		\node (4) at (4,2) {$\barCM^\Z (A^\ltimes)$};
	 		\node (5) at (2,-2) {$\Dcat_{\textup{sg}}(A^\ltimes)$};
	 		
	 		\draw[->] (2) to node[above,midway] {$\sim$}(4);
	 		\draw[->] (1) to node[above,midway] {$\sim$} (3);
	 		\draw[->] (2) to (1);
	 		\draw[->] (4) to (3);
	 		\draw[->] (1) to node[below, midway, sloped] {\cite{Kel}} node[above,midway,sloped] {$\sim$} (5);
	 		\draw[->] (5) to node[below, midway, sloped] {\cite{Bu}} node[above,midway,sloped] {$\sim$} (3);
	 	\end{tikzpicture}
	 \end{center}
	 
	 Then a careful analysis of Buchweitz's equivalence leads to an explicit description of the essential image of the functor $\Dcat^b(A) \to (\Dcat^b(A)/[1])_\Delta$ (Corollary 1.3.9.). We further prove that this functor preserves almost split sequences (Proposition 1.4.1.).
	 	 
	 The case where $A$ is gentle is studied in sections 2 and 3. We give here an explicit description of objects associated to homotopy classes of curves on $S$ in terms of maximal Cohen-Macaulay $A^\ltimes$-modules. The main theorem is shown in Section 3. The proof is similar to the one in \cite{BM} and uses the Bondarenko-Drozd classification on indecomposable objects in a certain category of matrices (\cite{BD}).

	\section*{Conventions}
	
	In this paper, unless otherwise stated, $k$ is a field and $A$ is a finite dimensional $k$-algebra of finite global dimension. 
	
	\noindent If $B$ is a finite dimensional algebra, we denote $\mod \, B$ the category of finite dimensional right $B$-modules, $\Dcat^b(B)$ the bounded derived category of finite dimensional $B$-modules and $\per(B)$ the full subcategory of $\Dcat^b(B)$ consisting of objects quasi-isomorphic to a bounded complex of projective $B$-modules.
	
	\noindent If $Q$ is a quiver, we denote $Q_0$ the set of vertices and $Q_1$ the set of arrows. If $\alpha$ and $\beta$ are two paths of $Q$ such that the target of $\alpha$ is the source of $\beta$, we denote $\alpha \beta$ the path obtained by concatening them.	
	
	\section{The orbit category of $\Dcat^b(A)$ under $[1]$}
	
	\subsection{Orbit categories}
	
	In this section, we recall results from \cite{Kel} concerning orbit categories and apply them to $(\Dcat^b(A)/[1])_\Delta$.
	
	\begin{definition}
		Let $\Tcat$ be an additive category and $F$ an autoequivalence of $\Tcat$. We define the quotient category $\Tcat/F$ as the category with objects those of $\Tcat$ and morphism spaces :
		\[\forall X,Y \in \Tcat, \, \Hom_{\Tcat/F}(X,Y) = \bigoplus_{i \in \Z} \Hom_\Tcat (X,F^i Y)\]
	\end{definition}
	
	\begin{remark}
		The quotient category $\Tcat/F$ is an additive category and the natural projection functor $\Tcat \to \Tcat/F$ is an additive functor.
		
		However, if $\Tcat$ is a triangulated category and $F$ is an autoequivalence of triangulated categories then $\Tcat/F$ does not necessarily have a triangulated structure. 
	\end{remark}
	
	The definition of the orbit cateogry $(\Dcat^b(A)/[1])_\Delta$ can be found in \cite{Kel}. This definition comes with a fully faithful additive functor
	\[\Dcat^b(A)/[1] \longrightarrow (\Dcat^b(A)/[1])_\Delta\]
	such that the composition
	\[\Dcat^b(A) \longrightarrow \Dcat^b(A)/[1] \longrightarrow (\Dcat^b(A)/[1])_\Delta\]
	is a triangulated functor.
	
	\begin{definition}
		We define the trivial extension of $A$ with regards to itself as $A^{\ltimes} = A \oplus A$ with product :
		\[\forall a,a',x,x' \in A, \; (a,x).(a',x') = (aa', ax' + xa')\]
	\end{definition}
	
	\begin{theorem}{(\cite[Section 7]{Kel})}
		
		Let $A$ be as in the previous definition, then the projection on the first coordinate $A^\ltimes \twoheadrightarrow A$ induces a functor $\Dcat^b(A) \to \Dcat^b(A^{\ltimes})$. This functor in turn induces an equivalence of triangulated categories
		\[K : (\Dcat^b(A)/[1])_\Delta \simeq \Dcat_{sg}(A^\ltimes):= \Dcat^b(A^{\ltimes})/\per(A^{\ltimes})\]
	\end{theorem}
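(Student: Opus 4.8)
This is Keller's theorem \cite[Section 7]{Kel}; the plan I would follow is to produce the equivalence $K$ from the functor induced by $\pi$ and then check it is an equivalence by a direct computation. Identify $A^\ltimes$ with $A\otimes_k k[\varepsilon]/\langle\varepsilon^2\rangle$ via $\varepsilon\mapsto(0,1_A)$, and write $\pi\colon A^\ltimes\twoheadrightarrow A$ for the projection. Since $k[\varepsilon]/\langle\varepsilon^2\rangle$ is self-injective over $k$, one has $\mathrm{injdim}_{A^\ltimes}(A^\ltimes)=\mathrm{gl.dim}\,A<\infty$ on both sides, so $A^\ltimes$ is Iwanaga--Gorenstein and by Buchweitz \cite{Bu} there is a triangulated equivalence $\Dcat_{sg}(A^\ltimes)\simeq\barCM(A^\ltimes)$. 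Restriction of scalars along $\pi$ is exact, hence induces a triangulated functor $\pi_*\colon\Dcat^b(A)\to\Dcat^b(A^\ltimes)$; post-composing with the Verdier quotient gives a triangulated functor $\bar\pi_*\colon\Dcat^b(A)\to\Dcat_{sg}(A^\ltimes)$. The two things to prove are then: (i) $\bar\pi_*\circ[1]\simeq\bar\pi_*$ coherently, so that $\bar\pi_*$ factors through a triangulated functor $K\colon(\Dcat^b(A)/[1])_\Delta\to\Dcat_{sg}(A^\ltimes)$ by the universal property of the triangulated hull; and (ii) $K$ is essentially surjective and fully faithful.

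For (i): if $P$ is a projective right $A$-module, then $P\otimes_A A^\ltimes$ is a projective right $A^\ltimes$-module on which $\varepsilon$ acts by a map whose image and kernel are both isomorphic to $\pi_*P$, so there is a short exact sequence $0\to\pi_*P\to P\otimes_A A^\ltimes\to\pi_*P\to0$ of $A^\ltimes$-modules, functorial in $P$. Any short exact sequence $0\to X\to Q\to Y\to0$ with $Q$ projective yields a triangle in $\Dcat^b(A^\ltimes)$, hence, after killing $Q$ in $\Dcat_{sg}(A^\ltimes)$, an isomorphism $Y\simeq X[1]$; applied to the sequence above this gives $\bar\pi_*P[1]\simeq\bar\pi_*P$, naturally in $P$. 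Since $A$ has finite global dimension, $\Dcat^b(A)=K^b(\mathrm{proj}\,A)$, so this natural isomorphism propagates to a natural isomorphism $\bar\pi_*\circ[1]\simeq\bar\pi_*$ on all of $\Dcat^b(A)$. Feeding this datum, lifted to a dg enhancement (which the functoriality of the above sequences makes possible), into Keller's construction of the hull produces $K$, together with a commutative triangle $\Dcat^b(A)\to(\Dcat^b(A)/[1])_\Delta\xrightarrow{K}\Dcat_{sg}(A^\ltimes)$.

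For (ii): every finite-dimensional $A^\ltimes$-module $M$ sits in $0\to\varepsilon M\to M\to M/\varepsilon M\to0$ with outer terms in $\mod A$ (as $\varepsilon$ annihilates them), and $A^\ltimes$ itself lies in $\operatorname{thick}_{A^\ltimes}(A)$ via $0\to A\to A^\ltimes\to A\to0$; hence $\operatorname{thick}_{A^\ltimes}(A)=\Dcat^b(A^\ltimes)$, so $\Dcat_{sg}(A^\ltimes)$ is generated as a triangulated category by the image of $\bar\pi_*$, whence $K$ is essentially surjective (the hull being generated by the image of $\Dcat^b(A)$). For full faithfulness, since $K$ is triangulated and the hull is generated by the image of $\Dcat^b(A)$, it suffices to check $K$ is bijective on $\Hom(X,Y)$ for $X,Y$ objects of $\Dcat^b(A)$. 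On the source, full faithfulness of $\Dcat^b(A)/[1]\hookrightarrow(\Dcat^b(A)/[1])_\Delta$ gives $\Hom_{(\Dcat^b(A)/[1])_\Delta}(X,Y)=\bigoplus_{i\in\Z}\Hom_{\Dcat^b(A)}(X,Y[i])$, a \emph{finite} sum by finiteness of the global dimension. On the target, $\Hom_{\Dcat_{sg}(A^\ltimes)}(\bar\pi_*X,\bar\pi_*Y)=\widehat{\operatorname{Ext}}^0_{A^\ltimes}(\pi_*X,\pi_*Y)$; tensoring a bounded complex of projectives representing $X$ over $A$ with the periodic complete resolution $(\cdots\xrightarrow{\varepsilon}R\xrightarrow{\varepsilon}R\xrightarrow{\varepsilon}\cdots)$ of $k$ over $R:=k[\varepsilon]/\langle\varepsilon^2\rangle$ yields a complete resolution of $\pi_*X$ over $A^\ltimes$, and a Künneth computation over the field $k$ gives $\widehat{\operatorname{Ext}}^\bullet_{A^\ltimes}(\pi_*X,\pi_*Y)\cong\big(\bigoplus_{i\in\Z}\Hom_{\Dcat^b(A)}(X,Y[i])\big)\otimes_k k[u,u^{-1}]$ with $\deg u=1$, whose degree-$0$ part is $\bigoplus_{i\in\Z}\Hom_{\Dcat^b(A)}(X,Y[i])$. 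Tracking composition through these identifications shows the map induced by $K$ is the evident isomorphism, so $K$ is fully faithful and hence an equivalence.

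The main obstacle is twofold. First, upgrading the isomorphism $\bar\pi_*\circ[1]\simeq\bar\pi_*$ of triangulated functors to the coherent, dg-level datum required by the universal property of the triangulated hull in \cite{Kel}; the key point is that the short exact sequences $0\to\pi_*P\to P\otimes_A A^\ltimes\to\pi_*P\to0$ are genuinely functorial in $P$, so they realize the isomorphism on an enhancement. Second, the full-faithfulness step rests on the Tate/Künneth computation over $A^\ltimes\simeq A\otimes_k k[\varepsilon]/\langle\varepsilon^2\rangle$: identifying the Tate construction with inverting the degree-one periodicity class, and verifying compatibility with composition so that the abstract equivalence really is the functor induced by $\pi$. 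The same content can be organized differently by first invoking Keller's identification of $(\Dcat^b(A)/[1])_\Delta$ with $\per$ of the dg orbit algebra $\bigoplus_{p\ge0}A[p]$ and then matching that category with $\Dcat_{sg}(A^\ltimes)$ via Koszul duality against the Frobenius algebra $k[\varepsilon]/\langle\varepsilon^2\rangle$; the computational heart is unchanged.
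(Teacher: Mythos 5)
Your proposal is correct in spirit and takes a genuinely different route on the one part the paper actually proves itself. The paper cites \cite{Kel,Kelcor} for the existence and full faithfulness of $K$ and only establishes density; its density argument goes through simple modules: $\operatorname{thick}(\Scat_{A^\ltimes})=\Dcat^b(A^\ltimes)$ by the Jordan--H\"older filtration, and $\iota_A$ restricts to a bijection on isoclasses of simples because $A^\ltimes/\operatorname{rad}(A^\ltimes)\simeq A/\operatorname{rad}(A)$, so the image of $\Dcat^b(A)$ generates $\Dcat_{\textup{sg}}(A^\ltimes)$ as a thick subcategory. You reach the same conclusion via the $\varepsilon$-filtration $0\to\varepsilon M\to M\to M/\varepsilon M\to 0$ (whose outer terms are restricted $A$-modules) instead of Jordan--H\"older; this is equally valid and arguably slightly more structural, since it makes no reference to simples. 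Where you depart more substantially is that you also sketch the full-faithfulness via a Tate-cohomology/K\"unneth computation over $A^\ltimes\simeq A\otimes_k k[\varepsilon]/\langle\varepsilon^2\rangle$, together with the two-step d\'evissage from generators. That computation is the right shape and the d\'evissage is standard, but as you yourself note, lifting the natural isomorphism $\bar\pi_*\circ[1]\simeq\bar\pi_*$ to a coherent dg datum is exactly the content of Keller's universal property, so in the end you are still invoking Keller for the construction of $K$; the paper just cites him once for both construction and full faithfulness and saves the computation. In short: same architecture (construct $K$, check full faithfulness, check density), a different and correct density argument, and an additional sketch of the full-faithfulness step that the paper delegates entirely to the reference.
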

	
	\begin{proof}
		It is shown in \cite{Kel,Kelcor} that the functor $K$ exists and is fully faithful. It remains to show that $K$ is dense.
		
		By definition of $K$, the following diagram commutes :
		
		\begin{center}
			\begin{tikzpicture}[scale = 1.8]
				\node (1) at (0,0) {$\Dcat^b(A)$};
				\node (2) at (2,0) {$\Dcat^b(A^\ltimes)$};
				\node (3) at (4,0) {$\Dcat_{sg}(A^\ltimes)$};
				\node (5) at (2,-1) {$(\Dcat^b(A)/[1])_{\Delta}$};
				\draw[-to] (1) to node[above, midway] {$\iota_A$} (2);
				\draw[->>] (2) to node[above, midway] {$\pi_{A^\ltimes}$} (3);
				\draw[-to] (1) to (5);
				\draw[right hook-latex] (5) to node[below, midway] {$K$} (3);
			\end{tikzpicture}
		\end{center}
		
		Let $\Scat_{A^\ltimes}$ (resp. $\Scat_{A}$) be the full subcategory of $\Dcat^b(A^\ltimes)$ (resp. $\Dcat^b(A)$) consisting of simple objects. It is known that the thick subcategory of $\Dcat^b(A^\ltimes)$ generated by the simple objects $\textup{thick}(\Scat_{A^\ltimes})$ is $\Dcat^b(A^\ltimes)$. Because $\pi_{A^\ltimes}$ is essentially surjective, we have $\textup{thick}(\pi_{A^\ltimes}(\Scat_{A^\ltimes})) = \Dcat_{sg}(A^\ltimes)$. 
		
		However, $\iota_A$ restricts to an equivalence between $\Scat_{A}$ and $\Scat_{A^\ltimes}$, thus we have
		\[\Dcat_{sg}(A^\ltimes) = \textup{thick}(\pi_{A^\ltimes}\circ \iota_A(\Scat_A)).\]
		
		This finishes the proof.
	\end{proof}

	\subsection{Properties of $A^\ltimes$}
	
	\begin{definition}
		We define the following maps that will be useful in the rest of the paper :
		
		\begin{multicols}{2}
			{\begin{align*}
				\pi : A^\ltimes & \longrightarrow A \\
				(a,x) & \mapsto a
			\end{align*}}
			{\begin{align*}
				\iota : A & \longrightarrow A^\ltimes \\
				a & \mapsto (a,0)
			\end{align*}}
			{\begin{align*}
				\sigma : A^\ltimes & \longrightarrow A^\ltimes \\
				(a,x) & \mapsto (0,a)
			\end{align*}}
			{\begin{align*}
				q : A^\ltimes & \longrightarrow A^\ltimes \\
				(a,x) & \mapsto (a,-x)
			\end{align*}}
		\end{multicols}
		
		The map $\iota$ is a morphism of $A$-bimodules and the maps $\pi$, $\sigma$ and $q$ are morphisms of $A^\ltimes$-bimodules.
	\end{definition}
	
	We will now be considering algebras defined as quotients of path algebras of quivers. If $Q$ is a quiver, we define $Q_0$ to be the set of vertices and $Q_1$ the set of arrows of $Q$.
	
	\begin{proposition}
		If $A = kQ/I$, then $A^\ltimes = kQ'/I'$ where :
		\begin{itemize}
			\item $Q'_0 = Q_0$ ;
			\item $Q'_1 = Q_1 \cup \{\varepsilon_i : i \to i | i \in Q_0\}$
			\item the relations in $I'$ are those of $I$ to which we add the relations $\varepsilon_i ^2 = 0$ for all $i \in Q_0$ and $\varepsilon_i \alpha = \alpha \varepsilon_j$ for all $\alpha : i \to j $ in $Q_1$.
		\end{itemize}
	\end{proposition}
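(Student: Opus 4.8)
The plan is to produce an explicit $k$-algebra homomorphism $\Phi \colon kQ' \to A^\ltimes$, to show it is surjective with kernel $I'$, and to conclude by a dimension count. Write $\bar p \in A$ for the image of a path $p$ of $Q$ under $kQ \twoheadrightarrow A$, and let $(e_i)_{i \in Q_0}$ be the complete set of primitive orthogonal idempotents of $A$ attached to the vertices. First I would define $\Phi$ on the generators of the path algebra by $e_i \mapsto (e_i,0)$, $\alpha \mapsto (\bar\alpha,0)$ for $\alpha \in Q_1$, and $\varepsilon_i \mapsto (0,e_i)$. Since $(0,e_i)$ is supported at the vertex $i$ — explicitly $(e_j,0)(0,e_i)(e_l,0) = \delta_{ij}\delta_{il}(0,e_i)$ — and $(\bar\alpha,0)$ connects the idempotents at the source and target of $\alpha$, this assignment is compatible with the defining relations of $kQ'$ and extends uniquely to an algebra homomorphism. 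It is moreover surjective: its image contains every $(a,0)$, $a\in A$, because $A$ is spanned by images of paths, and every $(0,a) = (a,0)\cdot\sum_{i\in Q_0}(0,e_i)$.

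Next I would check that the three families of relations generating $I'$ lie in $\ker\Phi$: an element $r \in I$, viewed in $kQ'$ via $kQ \hookrightarrow kQ'$, has $\Phi(r) = (\bar r,0) = 0$; one has $\Phi(\varepsilon_i^2) = (0,e_i)^2 = 0$; and for $\alpha\colon i \to j$ in $Q_1$ one computes $\Phi(\varepsilon_i\alpha) = (0,e_i)(\bar\alpha,0) = (0,\bar\alpha) = (\bar\alpha,0)(0,e_j) = \Phi(\alpha\varepsilon_j)$, using $e_i\bar\alpha = \bar\alpha = \bar\alpha e_j$. Hence $\Phi$ induces a surjection $\Psi \colon kQ'/I' \twoheadrightarrow A^\ltimes$, and since $A$ is finite dimensional it only remains to show $\dim_k kQ'/I' \le \dim_k A^\ltimes = 2\dim_k A$. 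The key is a normal form: using $\varepsilon_i\alpha = \alpha\varepsilon_j$ one shows by induction on length that $\varepsilon_i\, p \equiv p\,\varepsilon_{t(p)} \pmod{I'}$ for every path $p$ of $Q$ with source $i$, where $t(p)$ denotes the target of $p$. Pushing every loop $\varepsilon_{(-)}$ occurring in an arbitrary path of $Q'$ to the right and then invoking $\varepsilon_i^2 = 0$, one finds that any path of $Q'$ containing at least two loops is zero modulo $I'$, while any path containing at most one loop is congruent modulo $I'$ to $p$ or to $p\,\varepsilon_{t(p)}$ for some path $p$ of $Q$. Consequently $kQ'/I'$ is spanned by the images of $\{\,p\,\}$ and of $\{\,p\,\varepsilon_{t(p)}\,\}$, with $p$ ranging over the paths of $Q$.

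Finally, the images of the paths $p$ span a quotient of $A = kQ/I$, hence a space of dimension at most $\dim_k A$; and because $I' \supseteq I\cdot kQ'$, decomposing $A$ and $I$ by target vertex shows that $\bar p \mapsto \overline{p\,\varepsilon_{t(p)}}$ is a well-defined $k$-linear map $A \to kQ'/I'$ whose image is exactly the span of the $\overline{p\,\varepsilon_{t(p)}}$, contributing at most another $\dim_k A$. This gives $\dim_k kQ'/I' \le 2\dim_k A$, so the surjection $\Psi$ is an isomorphism and $\ker\Phi = I'$. (One may also recognize the relations of $I'$ as precisely those governing $A \otimes_k k[\varepsilon]/(\varepsilon^2)$, which matches the stated presentation.) The only genuinely delicate point is the normal-form reduction together with the well-definedness of $\bar p \mapsto \overline{p\,\varepsilon_{t(p)}}$ modulo $I'$; everything else is a routine verification on generators, and throughout one should keep in mind the path-composition convention fixed in the Conventions section so that the bookkeeping in $\varepsilon_i\alpha = \alpha\varepsilon_j$ stays consistent.
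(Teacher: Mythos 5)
Your proof is correct and follows essentially the same strategy as the paper: construct an explicit algebra homomorphism between $kQ'/I'$ and $A^\ltimes$, then conclude by a dimension count based on the observation that every nonzero path class in $kQ'/I'$ has a normal form $p$ or $p\,\varepsilon_{t(p)}$ with $p$ a path in $Q$. The bookkeeping differs slightly: the paper defines a map $f : kQ'/I' \to A^\ltimes$ directly on (equivalence classes of) paths, asserts injectivity, and asserts the equality $\dim_k(kQ'/I') = 2\dim_k A$; you instead define $\Phi$ on the free path algebra $kQ'$, verify that the three families of relations lie in $\ker\Phi$, establish surjectivity, and prove the inequality $\dim_k(kQ'/I') \le 2\dim_k A$ via the normal form and the auxiliary linear map $\bar p \mapsto \overline{p\,\varepsilon_{t(p)}}$. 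Your version is the more careful of the two — the paper's claims of injectivity and of the exact dimension are both consequences of the normal-form reduction that you actually carry out, whereas the paper leaves them at the level of assertion — and it also makes the (correct) use of the paper's concatenation convention explicit when checking $\Phi(\varepsilon_i\alpha) = \Phi(\alpha\varepsilon_j)$.
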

	
	\begin{proof}
		Let $kQ'/I'$ be the algebra defined above. Let $w$ be a path in $(Q',I')$, using the new relations we see that there is at most one arrow $\varepsilon_i$ in $w$. 
		
		We can now define the map :
		
		\begin{align*}
			f :  kQ'/I' &\to A^\ltimes \\
			w \, \textup{path of} \, (Q',I') &\mapsto \begin{cases}
				(0,w) \, \textup{if $w$ contain an arrow $\varepsilon_i$} \\
				(w,0) \, \textup{else}
			\end{cases}
		\end{align*}
		We now show that it is a well defined morphism of algebras. First, it is a linear map of vector spaces as it is defined on basis elements. If $w$ and $w'$ are two paths of $(Q',I')$ then by considering all possible cases, it is straightforward to show that $f(ww') = f(w)f(w')$.
		
		We thus have a morphism of algebras that is injective. However, the dichotomy of paths being in one of the two cases above shows that $\dim_k (kQ'/I') = 2. \dim_k (A) = \dim_k(A^\ltimes)$. Thus it is an isomorphism.
	\end{proof}
	
	\begin{remark}
		The previous description of $A^\ltimes$ also stands if $A$ is of infinite global dimension.
	\end{remark}
	
	\begin{proposition}
		There is an isomorphism of $k$-algebras :
		\[A^\ltimes \simeq A \otimes_k k[\varepsilon]/\langle \varepsilon^2 \rangle\]
	\end{proposition}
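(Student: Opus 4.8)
The plan is to write down the obvious candidate map directly and verify it is a bijective homomorphism of $k$-algebras; since both sides have underlying vector space $A\oplus A$ and passing between them is essentially a relabelling, I do not expect any real obstacle here — the only thing to be careful about is keeping track of the multiplication rules on the two sides.

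Concretely, let $\bar\varepsilon$ denote the image of $\varepsilon$ in $k[\varepsilon]/\langle\varepsilon^2\rangle$, so that $\{1,\bar\varepsilon\}$ is a $k$-basis and, as a $k$-vector space, $A\otimes_k k[\varepsilon]/\langle\varepsilon^2\rangle = (A\otimes 1)\oplus(A\otimes\bar\varepsilon)$. I would define
\[
\phi : A^\ltimes \longrightarrow A\otimes_k k[\varepsilon]/\langle\varepsilon^2\rangle,\qquad (a,x)\longmapsto a\otimes 1 + x\otimes\bar\varepsilon .
\]
This is manifestly $k$-linear, and the direct sum decomposition above shows immediately that it is bijective. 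It is also unital, since $\phi(1,0)=1\otimes 1$ is the identity of the tensor product.

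The remaining point is multiplicativity, which is a one-line check: on the one hand $\phi\big((a,x)(a',x')\big)=\phi(aa',\,ax'+xa')=aa'\otimes 1+(ax'+xa')\otimes\bar\varepsilon$, and on the other hand, expanding the product on the tensor algebra by bilinearity and using $\bar\varepsilon^2=0$,
\[
\phi(a,x)\,\phi(a',x') = (a\otimes 1 + x\otimes\bar\varepsilon)(a'\otimes 1 + x'\otimes\bar\varepsilon) = aa'\otimes 1 + (ax'+xa')\otimes\bar\varepsilon .
\]
The two expressions coincide, so $\phi$ is an isomorphism of $k$-algebras.

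As a remark, when $A=kQ/I$ one can instead read the statement off Proposition 1.2.3: the added loops $\varepsilon_i$ together with the relations $\varepsilon_i^2=0$ and $\varepsilon_i\alpha=\alpha\varepsilon_j$ exactly say that $\varepsilon=\sum_{i\in Q_0}\varepsilon_i$ is a central element with $\varepsilon^2=0$ adjoined freely, i.e. that $A^\ltimes$ is $A\otimes_k k[\varepsilon]/\langle\varepsilon^2\rangle$. I would nonetheless prefer the coordinate argument above, since it makes no assumption on $A$ beyond being a $k$-algebra and in particular does not require presenting $A$ by a quiver with relations.
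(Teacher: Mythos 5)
Your proof is correct and uses essentially the same map as the paper, namely $(a,x)\mapsto a\otimes 1 + x\otimes\bar\varepsilon$; the only stylistic difference is that the paper exhibits an explicit inverse $m\otimes(\alpha+\beta\varepsilon)\mapsto(m\alpha,m\beta)$, whereas you argue bijectivity from the direct-sum decomposition and separately check multiplicativity.
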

	
	\begin{proof}
		One easily checks that the following morphisms of $k$-algebras 
		\begin{multicols}{2}
			\begin{center}
			\begin{align*}
			f :  A^\ltimes &\to A \otimes_k k[\varepsilon]/\langle \varepsilon^2 \rangle \\
			(a,b) &\mapsto a \otimes 1 + b \otimes \varepsilon
			\end{align*}
			\end{center}
			\columnbreak 
			\begin{center}
			\begin{align*}
			g :  A \otimes_k k[\varepsilon]/\langle \varepsilon^2 \rangle &\to A^\ltimes \\
			m \otimes (\alpha + \beta \varepsilon)  &\mapsto (m \alpha , m \beta)
			\end{align*}
			\end{center}
		\end{multicols}
			
		are inverses of each other.
	\end{proof}
	
	\noindent Recall that a $k$-algebra $B$ is Gorenstein if $B$ has finite injective dimension as a right $B$-module and as a left $B$-module.
	
	\begin{corollary}
		The $k$-algebra $A^{\ltimes}$ is Gorenstein.
	\end{corollary}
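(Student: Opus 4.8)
The plan is to read this off Proposition 1.2.4. Writing $R := k[\varepsilon]/\langle \varepsilon^2 \rangle$, we have $A^\ltimes \simeq A \otimes_k R$, so it suffices to show that the tensor product over $k$ of the finite global dimension algebra $A$ with the self-injective algebra $R$ is Gorenstein. Throughout I would use the duality $D := \Hom_k(-,k)$ between finite dimensional left and right modules over a finite dimensional $k$-algebra.

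First I would record the two inputs. The algebra $R$ is a commutative Frobenius algebra, hence self-injective; concretely $R \cong D(R)$ as $R$-modules, so $R_R$ is injective. The algebra $A$ has finite global dimension $d$, so the right module $A_A$ admits a finite injective coresolution $0 \to A_A \to I^0 \to \cdots \to I^d \to 0$, and similarly on the left, finite global dimension being left--right symmetric for finite dimensional algebras; in particular $A$ itself is Gorenstein.

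The key step is the claim: if $I$ is a finite dimensional injective right $A$-module, then $I \otimes_k R$ is an injective right $A^\ltimes$-module. To see this, note that $I$ is a direct summand of $D(A)^n$ for some $n$ (since $D(A)$ is an injective cogenerator of $\mod A$ and $I$ is finite dimensional), hence $I \otimes_k R$ is a direct summand of $(D(A) \otimes_k R)^n$; using $R \cong D(R)$ and the natural isomorphism $D(A) \otimes_k D(R) \cong D(A \otimes_k R) = D(A^\ltimes)$ of right $A^\ltimes$-modules, the module $D(A) \otimes_k R$ is injective over $A^\ltimes$, and therefore so is its summand $I \otimes_k R$. Since $R$ is free over $k$, applying $- \otimes_k R$ to the finite injective coresolution of $A_A$ above produces an exact sequence $0 \to A^\ltimes \to I^0 \otimes_k R \to \cdots \to I^d \otimes_k R \to 0$ with injective terms, so the injective dimension of $A^\ltimes$ as a right module over itself is at most $d$.

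Finally I would run the same argument on the other side: $(A^\ltimes)^{\mathrm{op}} \cong A^{\mathrm{op}} \otimes_k R$ because $R$ is commutative, and $A^{\mathrm{op}}$ again has finite global dimension, so the injective dimension of $A^\ltimes$ as a left module is also finite. Hence $A^\ltimes$ is Gorenstein. The only non-formal point is the claim that $I \otimes_k R$ is injective over $A^\ltimes$; the rest is bookkeeping with the $k$-duality $D$ and the exactness of $- \otimes_k R$. (Alternatively, one could simply invoke the known fact that a tensor product over $k$ of two Gorenstein $k$-algebras is Gorenstein, with injective dimension bounded by the sum.)
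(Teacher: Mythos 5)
Your proof is correct, but it is a different argument from the one in the paper. The paper's proof is a one-liner: it observes that $k[\varepsilon]/\langle\varepsilon^2\rangle$ is Gorenstein and then cites a result from Auslander--Reiten to conclude that $A^\ltimes \simeq A \otimes_k k[\varepsilon]/\langle\varepsilon^2\rangle$ is Gorenstein if and only if $A$ is, so the statement follows since $A$ has finite global dimension. You instead give a self-contained direct proof: you use the $k$-duality $D = \Hom_k(-,k)$ to show that $D(A) \otimes_k R \cong D(A^\ltimes)$ is injective over $A^\ltimes$ (since $R \cong D(R)$ by self-injectivity), deduce that $- \otimes_k R$ sends finite dimensional injective $A$-modules to injective $A^\ltimes$-modules, and then tensor a finite injective coresolution of $A_A$ by the exact functor $- \otimes_k R$; the opposite side follows by symmetry using $R^{\mathrm{op}} = R$. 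This is exactly the standard proof of the fact the paper outsources to its reference, and you even note the general principle (tensor product of Gorenstein algebras is Gorenstein) at the end. What your approach buys is transparency and an explicit bound $\mathrm{injdim}\,A^\ltimes \leq \mathrm{gldim}\,A$; what the paper's citation buys is brevity. Both are fine; your version is the one a reader who did not have the reference at hand would want to see.
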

	
	\begin{proof}
		The algebra $k[\varepsilon]/\langle \varepsilon^2 \rangle$ is a Gorenstein algebra. Therefore $A^\ltimes$ is Gorenstein if and only if $A$ is (\cite{AR}).
	\end{proof}
	
	Because $A^\ltimes$ is Gorenstein, we can consider maximal Cohen-Macaulay modules over $A^\ltimes$. We now recall from \cite{Bu} the definition of maximal Cohen-Macaulay modules.
		
	\begin{definition}
		Let $B$ be a finite dimensional Gorenstein algebra. A $B$-module $M$ is said to be maximal Cohen-Macaulay if we have :
		\[ \forall i \neq 0 , \, \Ext^i_B(M,B) = 0\]
		The full subcategory of $\mod \, B$ consisting of maximal Cohen-Macaulay modules is denoted $\textup{CM}(B)$.
	\end{definition}
	
	\begin{remark}
		The projective $B$-modules are maximal Cohen-Macaulay, thus by definition they are projective-injective in $\CM(B)$. This endows $\CM(B)$ with a structure of Frobenius category. Thus we can consider its stable category $\barCM(B)$ which has a structure of triangulated category (\cite{Hap}). 
	\end{remark}
	
	\begin{proposition}
		Let $M$ be a $A^\ltimes$-module. Then $M$ is maximal Cohen-Macaulay if and only if $M_A$ is a projective $A$-module. 
	\end{proposition}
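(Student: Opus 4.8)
The plan is to prove the sharper statement that there is a natural isomorphism
\[ \Ext^i_{A^\ltimes}(M, A^\ltimes) \;\simeq\; \Ext^i_A(M_A, A) \]
for every $A^\ltimes$-module $M$ and every $i \geq 0$. Granting this, the proposition follows at once: $M$ is maximal Cohen-Macaulay if and only if the left-hand side vanishes for all $i \neq 0$, hence if and only if $\Ext^i_A(M_A, A) = 0$ for all $i \geq 1$; and since $A$ has finite global dimension, a module $N$ over $A$ with $\Ext^{\geq 1}_A(N,A) = 0$ is necessarily projective (take a minimal projective resolution of $N$; if its top term were non-zero, applying $\Hom_A(-,A)$ — which sends radical maps to radical maps — would produce a non-zero $\Ext$ in that degree).

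To set up the isomorphism I would first record that, via $\iota$, one has $A^\ltimes = \iota(A) \oplus \sigma(A^\ltimes)$ as right $A$-modules, so $(A^\ltimes)_A$ is free of rank $2$; hence the restriction functor $(-)_A \colon \mod A^\ltimes \to \mod A$ is exact and sends projectives to projectives. Applying $(-)_A$ to a projective resolution $P_\bullet \to M$ over $A^\ltimes$ therefore produces a projective resolution $(P_\bullet)_A \to M_A$ over $A$, so the two sides of the claimed isomorphism are the cohomologies of $\Hom_{A^\ltimes}(P_\bullet, A^\ltimes)$ and $\Hom_A((P_\bullet)_A, A)$ respectively. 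It thus suffices to produce a natural isomorphism between these two Hom-complexes, and for that it is enough to exhibit a natural transformation
\[ \Psi_M \colon \Hom_{A^\ltimes}(M, A^\ltimes) \longrightarrow \Hom_A(M_A, A) \]
which is an isomorphism whenever $M$ is projective over $A^\ltimes$ (naturality then forces $\Psi$ to commute with the differentials, and term-wise isomorphism on the $P_\bullet$ gives an isomorphism of complexes).

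The transformation I would use is $\Psi_M(\varphi) = \mathrm{pr} \circ \varphi$, where $\mathrm{pr} \colon (A^\ltimes)_A \to A$ is the map $(a,b) \mapsto b$; under the identification $A^\ltimes \simeq A \otimes_k k[\varepsilon]/\langle \varepsilon^2 \rangle$ of Proposition 1.2.4 this is $\mathrm{id}_A \otimes \lambda$, where $\lambda \colon k[\varepsilon]/\langle\varepsilon^2\rangle \to k$, $c + d\varepsilon \mapsto d$, is the (symmetric) Frobenius form. That $\mathrm{pr}$ is right $A$-linear for the $A$-structure induced by $\iota$ is immediate from $(a,b)\cdot c = (ac,bc)$, and naturality of $\Psi$ in $M$ is clear. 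Finally $\Psi_{A^\ltimes}$ is injective by a one-line computation — if $\varphi \in \Hom_{A^\ltimes}(A^\ltimes, A^\ltimes)$ is left multiplication by $(a,b)$ then $\Psi_{A^\ltimes}(\varphi)$ sends $(x,y) \mapsto ay + bx$, which vanishes identically only if $a = b = 0$ — and it is then bijective because both sides have $k$-dimension $2 \dim_k A$; by additivity $\Psi_P$ is an isomorphism for every projective $A^\ltimes$-module $P$, which completes the argument.

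The one genuine subtlety is the choice of $\mathrm{pr}$. The naive projection $\pi \colon (a,b) \mapsto a$ does \emph{not} work: the associated transformation kills the $A$-submodule $\sigma(A^\ltimes) \subseteq A^\ltimes$ and so is not injective. One really must pick out the ``$\varepsilon$-component'' — equivalently, use the Frobenius form of $k[\varepsilon]/\langle\varepsilon^2\rangle$ — to make $\Psi_{A^\ltimes}$ an isomorphism rather than merely a morphism. Everything else (exactness of restriction, the dimension count, and the elementary projective-dimension fact quoted in the first paragraph) is routine.
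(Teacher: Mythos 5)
Your proof is correct and, once the notation is unwound, it is essentially the paper's own argument. The paper invokes a change-of-rings isomorphism $\Ext^n_{A^\ltimes}(X,\Hom_A(A^\ltimes,U)) \simeq \Ext^n_A(X,U)$ from Assem and then exhibits an explicit $A^\ltimes$-module isomorphism $\Hom_A(A^\ltimes,A)\simeq A^\ltimes$ via maps $\Gamma,\Delta$; your natural transformation $\Psi_M$ is exactly the composite of those two identifications (your computation that $\Psi_{A^\ltimes}$ sends left multiplication by $(a,b)$ to $(x,y)\mapsto ay+bx$ reproduces the paper's $\Gamma$ verbatim), so you are simply reproving the cited adjunction lemma from scratch rather than quoting it, and the final reduction to finite global dimension is the same.
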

	
	\begin{proof}
		As an $A$-module, $A^\ltimes$ is isomorphic to $A \oplus A$ and is thus projective. We can use \cite[Proposition~2.4.]{Ass} : 
		\[\forall n \geq 0, \; \forall X \in \mod A^\ltimes, \; \forall U \in \mod A, \; \Ext_{A^\ltimes}^n(X,\Hom({}_{A^\ltimes }A_A^\ltimes,U)) \simeq \Ext_A^n (X,U)\]
		If we use this isomorphism for $X=M$ a $A^\ltimes$-module and $U = A$, we get :
		\[\forall n \geq 0, \; \Ext^n_{A^\ltimes}(M,\Hom_A (A^\ltimes, A)) \simeq \Ext_A^n (M,A)\]
		We will now show that there is an isomorphism of $A^\ltimes$-modules between $\Hom_A (A^\ltimes, A)$ and $A^\ltimes$. Consider the following morphism of $A^\ltimes$-modules :
		\begin{align*}
			\Gamma : A^\ltimes & \longrightarrow \Hom_A(A^\ltimes, A) \\
			(a,x) & \mapsto (f_{(a,x)} : (b,y) \mapsto ay+xb)
		\end{align*}
		This morphism is additive, and if $(a,x),(a',x'),(b,y) \in A^\ltimes$ then :
		
		\[\left(\Gamma((a,x))._{A^\ltimes}(a',x')\right) (b,y) = f_{(a,x)}((a',x')(b,y)) = f_{(a,x)}(a'b, a'y + x'b) = aa'y+ax'b+xa'b\]
		
		and 
		\[\Gamma((a,x)(a',x'))(b,y) = \Gamma((aa',ax'+xa'))(b,y) = f_{(aa',ax'+xa')}(b,y) = aa'y+ax'b+xa'b\]
		Thus it is a morphism of $A^\ltimes$-modules. Consider the following morphism of $A^\ltimes$-modules :
		\begin{align*}
			\Delta : \Hom_A(A^\ltimes,A) & \longrightarrow A^\ltimes \\
			f &\mapsto (f(0,1),f(1,0))
		\end{align*}
		This map is clearly additive. To see that it is a morphism of $A^\ltimes$-modules, let $(a,x)$ be in $A^\ltimes$ and $f : A^\ltimes \to A$ be morphism of $A$-modules. We have :
		\begin{align*}
			\Delta(f.(a,x)) &= (f((a,x)(0,1)),f((a,x)(1,0))) \\
			&= (f(0,a),f(a,x)) \\
			&= (f(0,1)a, f(0,1)x + f(1,0)a) \\
			&= (f(0,1),f(1,0))(a,x) \\
			&= \Delta(f)(a,x)
		\end{align*}
		We can now observe that $\Delta$ and $\Gamma$ are mutual inverses, which proves :
		\[\Hom_A(A^\ltimes,A) \simeq A^\ltimes \]
		Thus $M$ is a maximal Cohen-Macaulay module over $A^\ltimes$, if and only if it is a maximal Cohen-Macaulay module over $A$. Since $A$ has finite global dimension, this second condition is equivalent to being a projective $A$-module.
	\end{proof}

	\begin{definition}
		Let $B$ be a finite dimensional $k$-algebra. We define the category of differential modules $\Dif \, B$ as follows :
		\begin{itemize}
			\item the objects are pairs $(M,d)$ where $M$ is a finitely generated $B$-module and $d$ is an endomorphism of $M$ such that $d^2=0$ ;
			\item for any two objects $(M,d)$ and $(N,d')$, a morphism $f : (M,d) \to (N,d')$ is the data of a morphism of $B$-modules $f : M \to N$ such that $f \circ d = d' \circ f$.
		\end{itemize}
		We also define $\Pcat(B)$ to be the full subcategory of $\Dif \, B$ consisting of objects $(P,d)$ where $P$ is a projective $B$-module.
	\end{definition}
	
	The categories $\Dif \, B$ and $\Pcat(B)$ are additive categories.
	
	\begin{proposition}
		Let $M$ be a $A^\ltimes$ module. We define $\Psi(M)$ to be the differential module $(M_A,d)$ where $M_A$ is the restriction of $M$ to $\mod \, A$ and $d$ is the endomorphism of $M_A$ given by the action of $\varepsilon$ on $M$ when $A^\ltimes$ is seen as the algebra $A \otimes_k k[\varepsilon]/\langle \varepsilon^2 \rangle$.
		
		If $f :M \to N$ is a morphism of $A^\ltimes$-modules, we define $\Psi(f)$ to be the restriction of $f$ to a morphism of $A$-modules $f_A : M_A \to N_A$.
		
		Then $\Psi$ defines an additive functor $\mod \, A^\ltimes \to \Dif \, A$ that is an equivalence of categories. Furthermore, this functor restricts to an equivalence of categories $\CM(A^\ltimes) \overset{\sim}{\to} \Pcat(A)$.
	\end{proposition}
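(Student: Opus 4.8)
The plan is to exhibit an explicit quasi-inverse $\Phi\colon \Dif\, A \to \mod\, A^\ltimes$, check that $\Psi$ and $\Phi$ are mutually inverse, and then read off the statement about maximal Cohen-Macaulay modules from the previous proposition.

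First I would work through the identification $A^\ltimes \simeq A \otimes_k k[\varepsilon]/\langle \varepsilon^2 \rangle$. Under it, the element $\varepsilon = 1 \otimes \varepsilon$ is central in $A^\ltimes$ and squares to zero, so for a right $A^\ltimes$-module $M$ the map $d\colon m \mapsto m\varepsilon$ satisfies $d(m)a = m\varepsilon a = ma\varepsilon = d(ma)$ for $a \in A$ and hence is a morphism of $A$-modules (not merely $k$-linear), with $d^2 = 0$; this is exactly the pair $\Psi(M) = (M_A, d)$. Conversely, given $(M,d) \in \Dif\, A$, I would define $\Phi(M,d)$ to be $M$ equipped with the right $A^\ltimes$-action $m\cdot(a + b\varepsilon) = ma + d(m)b$, which is well defined precisely because $d$ is $A$-linear and $d^2 = 0$, and set $\Phi(f) = f$ on morphisms.

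Next I would observe that, since $A$ and $\varepsilon$ generate $A^\ltimes$ as a $k$-algebra, a $k$-linear map $f\colon M \to N$ between $A^\ltimes$-modules is $A^\ltimes$-linear if and only if it is $A$-linear and commutes with the two differentials, i.e. if and only if it is a morphism $\Psi(M) \to \Psi(N)$ in $\Dif\, A$. This single remark shows simultaneously that $\Psi$ and $\Phi$ are well-defined additive functors, that each is fully faithful, and that $\Psi\Phi$ and $\Phi\Psi$ are the respective identity functors; hence $\Psi$ is an equivalence (in fact an isomorphism) of categories.

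Finally, for the restriction, recall that $\Pcat(A)$ is by definition the full subcategory of $\Dif\, A$ on the objects $(P,d)$ with $P$ projective, while $\CM(A^\ltimes)$ is a full subcategory of $\mod\, A^\ltimes$. By the previous proposition, $M \in \CM(A^\ltimes)$ if and only if $M_A$ is projective, i.e. if and only if $\Psi(M) \in \Pcat(A)$; since $\Psi$ is an equivalence identifying these two full subcategories, it restricts to the desired equivalence $\CM(A^\ltimes) \overset{\sim}{\to} \Pcat(A)$. I do not expect a genuine obstacle here: the only point that needs care is the bookkeeping with right modules that makes $d$ honestly $A$-linear rather than just $k$-linear, together with the fact that $\Pcat(A)$ was set up as a full subcategory, so that no separate fullness check is required for the last statement.
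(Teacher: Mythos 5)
Your proposal is correct and follows essentially the same route as the paper: both define the quasi-inverse $\Phi$ by re-equipping a differential $A$-module $(M,d)$ with the right $A^\ltimes$-action in which $\varepsilon$ acts by $d$, and both invoke the preceding proposition (that $M\in\CM(A^\ltimes)$ iff $M_A$ is projective) to get the restricted equivalence $\CM(A^\ltimes)\simeq\Pcat(A)$. You simply make explicit the bookkeeping the paper leaves to the reader, in particular that $\varepsilon$ is central so $d$ is honestly $A$-linear and that $A^\ltimes$-linearity of a map is equivalent to being a morphism in $\Dif\,A$.
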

	
	\begin{proof}
		To show that it is indeed an equivalence of categories, we will give its quasi-inverse. We define the functor $\Phi : \Dif \, A \to \mod \, A^\ltimes$ as follows :
		\begin{itemize}
			\item if $(M,d)$ is an object of $\Dif \, A$, we define the $A^\ltimes$-module as having underlying vector space $M$ and the action of $A^\ltimes \simeq A \otimes_k k[\epsilon]/ \langle \epsilon^2 \rangle$ as follows :
			\[\forall a\otimes f \in A \otimes_k k[\epsilon]/ \langle \epsilon^2 \rangle \; \forall x \in M, \; x.(a\otimes f) = a f(d)(x)\]
			where $f$ in $k[\varepsilon]/\langle \varepsilon^2 \rangle$ is the polynomial of degree at most one representing its equivalence class.
			\item the functor $\Phi$ is defined to be the identity on morphisms.
		\end{itemize}
		Thus defined, $\Phi$ is an additive functor and we can check that it is a quasi-inverse to $\Psi$. 
		
		Using Proposition 1.2.8., we see that $\Psi$ restricts to an equivalence of categories  $$\CM(A^\ltimes) \overset{\sim}{\to} \Pcat(A)$$.
	\end{proof}
	
	This equivalence endows $\Pcat(A)$ with a structure of Frobenius category in which the projective-injective objects are the pairs $(P\oplus P, \begin{pmatrix}
		0 & \id_P \\
		0 & 0
	\end{pmatrix})$.
	
	Recall that a graded $k$-algebra $B$ is a $k$-algebra with the data of a decomposition $B = \bigoplus_{i \in \Z} B_i$ such that if $x$ and $y$ are respectively in $B_i$ and $B_j$ then $xy$ is in $B_{i+j}$. A graded (right) $B$-module is a $B$-module $M = \bigoplus_{i \in \Z} M_i$ such that if $x$ and $m$ are respectively in $B_i$ and $M_j$ then $m.x$ is in $M_{i+j}$. A morphism of graded $B$-modules $f : M \to N$ is a morphism of $B$-modules such that for all $i$ in $\Z$, $f(M_i) \subset N_i$. If $M$ is a graded $B$-module, we denote $M(1)$ the shifted module with $(M(1))_i = M_{i-1}$. We denote $\mod^\Z \, B$ the category of graded $B$-modules.
	
	We now endow $A^\ltimes$ with a grading. To do so, recall the isomorphism $A^\ltimes \simeq A \otimes_k k[\varepsilon]/\langle \varepsilon^2 \rangle$ from Proposition 1.2.4. By considering $A$ as a graded algebra concentrated in degree zero and the grading on $k[\varepsilon]/\langle \varepsilon^2 \rangle$ by setting $\deg(\varepsilon)=1$, we have an induced grading on $A^\ltimes$. As $A$ is a subalgebra of $A^\ltimes$, it has an induced grading which is the trivial grading.
	
	With this grading, we can consider $\CM^\Z(A^\ltimes)$ the full subcategory of $\mod^\Z \, A^\ltimes$ consisting of graded maximal Cohen-Macaulay modules. 
	
	\begin{definition}
		We define the category $\Dif^\Z(A)$ as follows :
		\begin{itemize}
			\item objects are pairs $(M,d)$ where $M$ is a graded $A$-module and $d$ is an endomorphism of $M$ with degree 1.
			\item for any two objects $(M,d)$ and $(N,d')$, a morphism $f : (M,d) \to (N,d')$ is the data of a degree zero morphism of graded $A$-modules $f :M \to N$ such that $f \circ d = d' \circ f$.
		\end{itemize}
		We also define $\Pcat^\Z (A)$ to be the full subcategory of $\Dif^\Z (A)$ consisting of objects $(P,d)$, where $P$ is a projective $A$-module.
	\end{definition}
	
	Using the same arguments as previously, there is an equivalence of categories $\Pcat^\Z (A) \simeq \CM^\Z(A)$ that endows $\Pcat^\Z(A)$ with a structure of Frobenius category.	

	\begin{proposition}
		There exists a commutative diagram 
		
		\begin{center}
			\begin{tikzpicture}[scale = 2]
				\node (1) at (0,0) {$\Ccat^b(\proj \, A)$};
				\node (2) at (3,0) {$\Pcat^\Z(A) \simeq \CM^\Z (A^\ltimes)$};
				\node (3) at (0,-1) {$\Dcat^b(A) \simeq \Kcat^b(\proj \, A)$};
				\node (4) at (3,-1) {$\underline{\mathcal{P}}^\Z(A) \simeq \barCM^\Z(A^\ltimes)$};
				
				\draw[->] (1) to node[above, midway] {$\tilde{\Gamma}$} (2);
				\draw[->] (1) to (3);
				\draw[->] (3) to node[above, midway] {$\Gamma$} (4);
				\draw[->] (2) to (4);
			\end{tikzpicture}
		\end{center}
		
		\noindent where $\tilde{\Gamma}$ is an exact equivalence of categories and $\Gamma$ is a triangulated equivalence of categories induced by $\tilde{\Gamma}$.
	\end{proposition}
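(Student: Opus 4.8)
The plan is to build $\tilde{\Gamma}$ as a \emph{totalization} functor, recognise both columns of the square as Frobenius categories whose stable categories are the entries in the bottom row, and then let $\Gamma$ and the commutativity of the square drop out formally from the functoriality of the passage to the stable category.

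First I would fix the two exact structures. On $\Ccat^b(\proj A)$ I take the componentwise split short exact sequences of complexes as the conflations; this is a Frobenius exact structure whose projective–injective objects are exactly the contractible complexes, i.e. the direct summands of finite sums of the interval complexes $I_P^{\,i}\colon \cdots \to 0 \to P \overset{\id}{\to} P \to 0 \to \cdots$ concentrated in degrees $i,i+1$ with $P \in \proj A$, and whose stable category is $\Kcat^b(\proj A)$, which coincides with $\Dcat^b(A)$ since $A$ has finite global dimension. On $\Pcat^\Z(A)$ I take the Frobenius structure transported from $\CM^\Z(A^\ltimes)$ through the graded analogue of Proposition 1.2.9; concretely, its conflations are the short exact sequences of differential graded modules that become split exact after forgetting the differentials. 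Here Proposition 1.2.8 is what makes the two descriptions agree: the restriction to $A$ of the right-hand term of a conflation in $\CM^\Z(A^\ltimes)$ is projective, so the underlying sequence of graded $A$-modules splits. Its projective–injective objects are the graded analogues of the objects $(P \oplus P, \left(\begin{smallmatrix}0&\id_P\\0&0\end{smallmatrix}\right))$ from the remark following Proposition 1.2.9, namely $(P(n)\oplus P(n-1),d)$ with $d$ the identity of $P$ raising the degree by one.

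Next I define $\tilde{\Gamma}$ on objects by sending $(P^\bullet,\partial)$ to the graded $A$-module $\bigoplus_{i\in\Z}P^i$, with $P^i$ placed in degree $i$, equipped with the degree-$1$ endomorphism $\partial$; this module is finitely generated because the complex is bounded, and it is projective, so it lies in $\Pcat^\Z(A)$, while a chain map $f^\bullet$ is sent to $\bigoplus_i f^i$. Then $\tilde{\Gamma}$ is exact and reflects conflations, since totalizing turns a componentwise split exact sequence of complexes into a sequence of differential graded modules that is split after forgetting differentials, and conversely; it is fully faithful, because a degree-$0$ graded morphism $\bigoplus P^i \to \bigoplus Q^i$ is forced to decompose as $\bigoplus_i g^i$ with $g^i\colon P^i \to Q^i$, and commuting with the differential is precisely the chain-map condition; and it is essentially surjective, because any $(P,d)\in\Pcat^\Z(A)$ has $P \cong \bigoplus_i P_i$ with each graded piece $P_i$ a finitely generated projective $A$-module (the grading on $A$ being trivial, a graded projective is a sum of shifts of the $A(n)$), while $d$ of degree $1$ restricts to maps $d_i\colon P_i \to P_{i+1}$ with $d_{i+1}d_i=0$, i.e. a bounded complex of projectives mapped by $\tilde{\Gamma}$ to something isomorphic to $(P,d)$. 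Since $\tilde{\Gamma}$ moreover carries interval complexes to objects of the form $(P(n)\oplus P(n-1),d)$, it matches the two classes of projective–injectives. Hence $\tilde{\Gamma}$ is an exact equivalence of Frobenius categories, so it induces a triangulated equivalence $\Gamma$ between the stable categories $\Dcat^b(A) \overset{\sim}{\longrightarrow} \underline{\Pcat}^\Z(A) = \barCM^\Z(A^\ltimes)$, and the square commutes on the nose because both vertical arrows are the canonical projections onto the stable category and $\Gamma$ is by definition the functor induced by $\tilde{\Gamma}$.

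The fully faithful and essentially surjective parts are routine bookkeeping once the conventions are fixed. The step I expect to be the only genuine obstacle is pinning down the two exact structures so that $\tilde{\Gamma}$ is exact on the nose: verifying (or citing the folklore fact) that $\Ccat^b(\proj A)$ with componentwise split conflations is a Frobenius category with stable category $\Kcat^b(\proj A)$, and checking that under $\tilde{\Gamma}$ this matches the Frobenius structure on $\Pcat^\Z(A)$ coming from $\CM^\Z(A^\ltimes)$ — which is exactly the point where Proposition 1.2.8 is needed, through the splitting of the relevant short exact sequences of $A$-modules. Everything downstream of that identification is formal.
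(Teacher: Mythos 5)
Your proposal is correct and follows essentially the same route as the paper: define $\tilde{\Gamma}$ as the totalization functor $(P^\bullet,\partial)\mapsto(\bigoplus_i P^i,\sum_i \partial^i)$, verify it is an exact equivalence of Frobenius categories matching the projective–injective objects (the interval complexes on one side and the graded modules of the form $(P\oplus P,\text{degree-shift})$ on the other), and then invoke the standard fact (Happel's lemma) that such a functor descends to a triangulated equivalence of stable categories, with commutativity of the square automatic since both vertical arrows are the stabilization projections. The only difference is one of emphasis: you spell out the exact structures and the full faithfulness and essential surjectivity checks that the paper leaves as ``one easily checks,'' and you point explicitly at Proposition 1.2.8 as the reason the conflations in $\CM^\Z(A^\ltimes)$ are $A$-split; these are genuine details worth recording, but the underlying argument is the same.
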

	
	\begin{proof}
		Let $\tilde{\Gamma} : \Ccat^b(\proj \, A) \to \Pcat^\Z(A)$ be the functor defined as follows :
		\begin{itemize}
			\item If $(P^\bullet,d^\bullet)$ is a bounded complex of projective $A$-modules then $\tilde{\Gamma}((P^\bullet,d^\bullet)) = (\bigoplus_{i \in \Z} P^i, \sum_{i \in \Z} d^i)$ where the grading on $\bigoplus_{i \in \Z} P^i$ as an $A^\ltimes$-module is given by the degree of each $P^i$.
			\item A morphism of complexes $f^\bullet : P^\bullet \to Q^\bullet $ is sent to $\tilde{\Gamma}(f^\bullet) = \sum_{i \in \Z} f^i$.
		\end{itemize} 
		
		The functor $\tilde{\Gamma}$ is well defined on objects. To see that it is well defined on morphisms, let $f^\bullet : P^\bullet \to Q^\bullet$ be a morphism of cochain complexes of projective $A$-modules. Then the map
		\begin{align*}
			\tilde{\Gamma}(f) : \bigoplus_{i \in \Z} P^i &\to \bigoplus_{i \in \Z} Q^i \\
			(x_i)_{i \in \Z} &\mapsto (f^i(x_i))_{i \in \Z}
		\end{align*}
		is a morphism of $A^\ltimes$-modules. Indeed, if $(a,b)$ is in $A^\ltimes$, we have :
		\begin{align*}
			\tilde{\Gamma}(f)((x_i)_{i \in \Z}.(a,b)) &= \tilde{\Gamma}(f)((x_i a + d^{i-1}(x_{i-1}b))_{i \in \Z}) \\
			&= (f^i(x_i a + d^{i-1}(x_{i-1}b)))_{i \in \Z} \\
			&= (f^i(x_i)a + d^{i-1}(f^{i-1}(x_{i-1})b))_{i \in \Z} \\
			&= (f^i(x_i))_{i \in \Z} .(a,b) \\
			&= \tilde{\Gamma}(f)((x_i)_{i \in \Z}).(a,b)
		\end{align*}
		
		One easily checks that it is an additive equivalence of categories. Furthermore, if $0 \to (A^\bullet,d_A^\bullet) \overset{f^\bullet}{\to} (B^\bullet,d_B^\bullet) \overset{g^\bullet}{\to} (C^\bullet,d_C^\bullet) \to 0$, its image by $\tilde{\Gamma}$ is the short exact sequence
		\[0 \to (\bigoplus_{i \in \Z} A^i , \sum_{i \in \Z} d_A^i) \overset{\sum_{i \in \Z} f^i}{\longrightarrow}(\bigoplus_{i \in \Z} B^i , \sum_{i \in \Z} d_B^i) \overset{\sum_{i \in \Z} g^i}{\longrightarrow} (\bigoplus_{i \in \Z} C^i , \sum_{i \in \Z} d_C^i) \to 0\]
		This equivalence is thus an exact equivalence between two Frobenius categories. The projective-injective objects in $\Ccat^b(\proj A)$ are isomorphic to complexes of the form $\dots \to 0 \to P \xlongequal{} P \to 0 \to \dots$. We now see that $\tilde{\Gamma}$ sends projective-injective in $\Ccat^b(\proj A)$ to projective-injective objects in $\Pcat^\Z(A)$.
		Using \cite[Lemma p23]{Hap}, the functor $\tilde{\Gamma}$ restricts to a triangulated equivalence $\Gamma : \Kcat^b(\proj A) \to \underline{\mathcal{P}}^\Z (A)$.
		\end{proof}
		
		\begin{remark}
			The functors $\tilde{\Gamma} \circ [1]$ and $(1) \circ \tilde{\Gamma}$ are not isomorphic as the functor $[1]$ introduces a sign on the differential whereas $(1)$ does not change the graded module structure. 
		\end{remark}
	
	\begin{proposition}
		Let $M$ be a maximal Cohen-Macaulay $A^\ltimes$-module. There exists a unique decomposition $M \simeq M' \oplus M_{\textup{proj}}$ where $M_{\textup{proj}}$ is a projective $A^\ltimes$ module and $M'$ is of the form $(Q,\psi)$ with $\Im(\psi) \subset \textup{rad}(Q)$.
	\end{proposition}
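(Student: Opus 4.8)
The plan is to transport the statement across the equivalence $\Psi\colon\CM(A^\ltimes)\xrightarrow{\sim}\Pcat(A)$ established above and to deduce both assertions from the Krull--Schmidt property. Recall that $\Psi$ identifies the projective $A^\ltimes$-modules with the projective--injective objects of the Frobenius category $\Pcat(A)$, which by the discussion following that equivalence are exactly the objects $H(P_0):=\left(P_0\oplus P_0,\ \left(\begin{smallmatrix}0&\id\\0&0\end{smallmatrix}\right)\right)$ with $P_0$ a projective $A$-module; since the primitive idempotents of $A^\ltimes$ and of $A$ coincide (the difference lies in a nilpotent ideal), $H(P_0)$ is indecomposable exactly when $P_0$ is. As $A^\ltimes$ is finite dimensional, $\mod A^\ltimes$ is a Krull--Schmidt category, and $\CM(A^\ltimes)$, being closed under direct summands, is Krull--Schmidt as well. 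Hence $M$ decomposes into indecomposables $M\simeq\bigoplus_j M_j$, uniquely up to isomorphism and permutation; collecting the projective $M_j$ into $M_{\textup{proj}}$ and the remaining ones into $M'$ gives $M\simeq M'\oplus M_{\textup{proj}}$ with $M_{\textup{proj}}$ projective and $M'$ without nonzero projective direct summand. Uniqueness of such a decomposition is then formal: the multiset of indecomposable projective direct summands of $M$ is an isomorphism invariant, so $M_{\textup{proj}}$ is determined up to isomorphism, and cancellation in the Krull--Schmidt category pins down $M'$.

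It remains to match this description with the stated one, i.e.\ to prove that for $(Q,\psi)\in\Pcat(A)$ the condition $\Im(\psi)\subseteq\textup{rad}(Q)$ holds if and only if $(Q,\psi)$ has no nonzero projective--injective direct summand. The forward implication is immediate, since an $H(P_0)$-summand with $P_0\neq 0$ contributes to $\psi$ a block $\left(\begin{smallmatrix}0&\id\\0&0\end{smallmatrix}\right)$ whose image is not contained in the radical (Nakayama). For the converse, suppose $\Im(\psi)\not\subseteq\textup{rad}(Q)$, so the endomorphism $\overline\psi$ induced on the semisimple module $\overline Q:=Q/\textup{rad}(Q)$ is nonzero, and note $(\overline\psi)^2=0$ because it is induced by $\psi^2=0$. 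Pick a simple summand $T$ of $\overline Q$ with $\overline\psi|_T\neq 0$; then $\overline\psi|_T$ is injective, $T':=\overline\psi(T)\cong T$ is a simple summand of $\overline Q$, and $(\overline\psi)^2=0$ forces $T\cap T'\subseteq T\cap\ker\overline\psi=0$, so that $T\oplus T'$ is a direct summand of $\overline Q$. Lifting $T$ through the projective cover $Q\twoheadrightarrow\overline Q$ gives an indecomposable projective direct summand $P\subseteq Q$, with inclusion $\iota$, whose top maps isomorphically onto $T$. Then $\theta:=\begin{pmatrix}\psi\iota & \iota\end{pmatrix}\colon P\oplus P\to Q$ is a morphism $H(P)\to(Q,\psi)$ in $\Pcat(A)$ (the chain condition amounts to $\psi(\psi\iota)=0$), and modulo the radical $\theta$ is the isomorphism $\overline{\psi\iota}\oplus\overline\iota\colon\overline P\oplus\overline P\xrightarrow{\sim}T'\oplus T$ onto a summand of $\overline Q$. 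Since $Q$ is a projective $A$-module, a morphism into $Q$ from a projective module that is a split monomorphism modulo the radical is already a split monomorphism of $A$-modules; hence $\theta$ is injective with projective cokernel $C$, i.e.\ there is a conflation $0\to H(P)\xrightarrow{\theta}(Q,\psi)\to C\to 0$ in $\Pcat(A)$. Because $H(P)$ is injective in the Frobenius category $\Pcat(A)$, this conflation splits, so $H(P)$ with $P\neq 0$ is a direct summand of $(Q,\psi)$ — contradicting the hypothesis. Combined with the first paragraph, this proves the proposition.

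The only genuine difficulty I anticipate is in this converse. Two points require care: first, the relation $(\overline\psi)^2=0$ is precisely what makes $T$ and its image $T'$ independent inside $\overline Q$, so that $\theta$ is injective on tops and $T\oplus T'$ is a genuine summand; second, injectivity of $\theta$ modulo the radical upgrades to an honest split monomorphism of $A$-modules only because the target $Q$ is projective — this would fail for an arbitrary target. Once these are in hand, the conflation built from $\theta$ is absorbed by the injectivity of $H(P)$ in the Frobenius structure of $\Pcat(A)\simeq\CM(A^\ltimes)$. The rest — the passage through $\Psi$, the identification of the projective objects, and the extraction of existence and uniqueness from Krull--Schmidt — is routine.
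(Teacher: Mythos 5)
Your proof is correct, and it follows the same overall strategy as the paper: transport to $\Pcat(A)$, identify the projective--injective objects as the $H(P_0)$'s, and prove the characterisation by showing that when $\Im(\psi)\not\subseteq\textup{rad}(Q)$ one can exhibit a split monomorphism $H(P)\hookrightarrow(Q,\psi)$. Where you diverge is in the two auxiliary steps. For existence and uniqueness of the decomposition, you invoke Krull--Schmidt in $\CM(A^\ltimes)$ directly, whereas the paper passes to the 1-periodic cochain complex, decomposes it in $\Ccat(\proj A)$ into a contractible summand $Q^\bullet$ and a remainder $R^\bullet$, and then argues that the decomposition is compatible with 1-periodicity; your route is shorter and sidesteps that compatibility discussion. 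For locating the projective--injective summand, the paper decomposes $R=\bigoplus_i R_i$ into indecomposable projectives and finds indices $k,l$ with $\pi_k\psi|_{R_l}\colon R_l\to R_k$ an isomorphism, then branches into $k\neq l$ (explicit monomorphism from $H(R_l)$) and $k=l$ (whose treatment in the paper is, as written, incomplete). Your argument instead works at the level of the semisimple top $\overline Q$: picking a simple $T$ not killed by $\overline\psi$, the relation $\overline\psi^{\,2}=0$ forces $T$ and $T'=\overline\psi(T)$ to be disjoint simple summands, after which the morphism $\theta=\begin{pmatrix}\psi\iota&\iota\end{pmatrix}$ is a split monomorphism of $A$-modules and the injectivity of $H(P)$ in the Frobenius structure finishes. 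This handles the ``$k=l$'' and ``$k\neq l$'' situations uniformly (even when $T$ and $T'$ are isomorphic simples, they sit in distinct summands of $\overline Q$), so in this respect your argument is actually tighter than the one given in the paper. One small remark: when you assert that a morphism into a projective which is a split mono modulo the radical is already a split mono of $A$-modules, it is worth recalling that this is exactly where Nakayama and the Krull--Schmidt property of $\proj A$ are used; your parenthetical appeal to projectivity of the target is the right reason, and the claim holds.
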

	
	\begin{proof}
		We write $M = (P,\varphi)$ with $P$ a projective $A$-module and $\varphi$ an endomorphism of $P$ such that $\varphi^2 = 0$. This data is equivalent to that of a 1-periodic cochain complex of projective $A$-modules, let us denote it $P^\bullet$. As an object of $\Ccat(\proj \, A)$, we have a unique decomposition $P^\bullet = R^\bullet \oplus Q^\bullet $ where $Q^\bullet$ is the direct sum of cochain complexes of the form $\dots\to 0 \to X = X \to 0 \to \dots$ and $R^\bullet$ has no direct summands of this form.
		
		Because $P^\bullet$ is 1-periodic, so are $R^\bullet$ and $Q^\bullet$. This gives a decomposition $(P,\varphi) = (R,\psi) \oplus (Q,\psi')$.
		
		If a cochain complex of the form $\dots\to 0 \to X = X \to 0 \to \dots$ is a summand of $Q^\bullet$ then so are all its shifts. We can thus construct a summand of $Q^\bullet$
		\[\dots \to X\oplus X \overset{\begin{pmatrix}
				0 & id_X \\ 0 & 0
		\end{pmatrix}}{\to} X\oplus X \overset{\begin{pmatrix}
				0 & id_X \\ 0 & 0
		\end{pmatrix}}{\to} X \oplus X \to \dots  \; .\]
		
		We now see that $Q^\bullet$ can be written as a finite direct sum of cochain complexes of this form. Thus it is a cochain complex of the form : $(X \oplus X, \begin{pmatrix}
			0 & id_X \\
			0 & 0
		\end{pmatrix})$ , i.e. the projective $A^\ltimes$-module $X \otimes_A A^\ltimes$.
		
		It remains to show that $\Im(\psi)$ is a subset of $\textup{rad}(R)$. To do so, we show that if it is not the case, then $(R,\psi)$ has a projective direct summand.
		
		Let $R = \bigoplus_{i \in I} R_i$ be the decomposition of $R$ into indecomposable projective $A$-modules.
		
		If there exists $x$ in $R$ such that $\psi(x)$ is not in $\textup{rad}(R)$ then there exists $k$ in $I$ such that $\pi_k\psi(x)$ is not in $\textup{rad}(R_k)$ where $\pi_k$ is the projection of $R$ onto $R_k$.
		
		Furthermore, if $x = \sum_{i \in I} x_i$ then there exists $l$ in $I$ such that $\pi_k \psi(x_l)$ is not in $\textup{rad}(R_k)$. This means that $f := \pi_k \psi_{|R_l} : R_l \to R_k$ is a morphism between indecomposable projective modules such that $\Im(f)$ is not a subset of $\textup{rad}(R_k)$. Thus $f$ is an isomorphism.
		
		If $k \neq l$, we write $R = R_k \oplus R_l \oplus R'$ and we consider the following morphism 
		\[(R_l \oplus R_l, \begin{pmatrix}
			0 & 0 \\ 1 & 0
		\end{pmatrix}) \overset{\begin{pmatrix}
				1 & \psi : R_l \to R_l \\
				0 & f \\
				0 & \psi :  R_l \to R'
		\end{pmatrix}}{\xrightarrow{\hspace*{6cm}}} (R,\psi)\]
		This morphism is well defined and is a monomorphism because $f$ is an isomorphism. However, $(R,\psi)$ is a Cohen-Macaulay $A^\ltimes$-module and $(R_l \oplus R_l, \begin{pmatrix}
			0 & 0 \\ 1 & 0
		\end{pmatrix})$ is projective-injective in $\CM(A^\ltimes)$. Thus it is a summand of $(R,\psi)$.
		
		If $k=l$ then by writing $R = R_l \oplus R'$, the restriction of $\varphi$ to $R_l$ is an isomorphism. Thus if $x$ is in $R_l \backslash \textup{rad}(R_l)$ then $\varphi(x)$ is not in $\textup{rad}(R_l)$.
	\end{proof}
	
	We can now define $\CMrad(A^\ltimes)$ as the full subcategory of $\textup{CM}(A^\ltimes)$ containing objects $(P,\varphi)$ such that $\Im(\varphi)$ is a subset of $\textup{rad}(P)$. By the previous proposition, we have a bijection between indecomposable objects of $\CMrad(A^\ltimes)$ and indecomposable objects of $\barCM(A^\ltimes)$.
	
	\subsection{Keller-Buchweitz functors}
	
	As seen in Corollary 1.2.5, $A^\ltimes$ is a Gorenstein algebra. When this is the case, there exists an equivalence of categories $\Dcat_{sg}(A^\ltimes) \overset{\sim}{\longrightarrow} \barCM(A^\ltimes)$ explicited in \cite{Bu}.
	
	The aim of this section is to show that this triangulated equivalence is compatible with the triangulated equivalence $(\Dcat^b(A)/[1])_\Delta \simeq \Dcat_{sg}(A^\ltimes)$ given by Keller (Theorem 1.1.4., \cite{Kel}) and the triangulated equivalence $\Dcat^b(A) \simeq \barCM^\Z(A^\ltimes)$ of Proposition 1.2.12.
	
	We now recall results from \cite{Bu} about maximal Cohen-Macaulay modules over a Gorenstein algebra $B$.
	
	\begin{itemize}
		\item If $M$ is a $B$-module, a complete resolution of $M$ is an acyclic complex of projective $B$-modules $(P^\bullet,d^\bullet)$ such that $\Coker(d^{-1} : P^{-1} \to P^0) = M$.
		\item $\underbar{APC}(B)$ denotes the homotopy category of acyclic complexes of finitely generated projective $B$-modules. It is a full triangulated subcategory of $\Kcat(\proj \, B)$ the homotopy category of cochain complexes of projective $B$-modules.
		\item For $i \in \Z$, we define the i-th syzygy functor as follows : 
		\[\forall (X^\bullet,d^\bullet) \in \Kcat(\proj \, B), \; \Omega_i (X^\bullet) = \Coker(d^{i-1} : X^{i-1} \to X^i)\]
	\end{itemize}
	
		If $M$ is a $B$-module and $p_M : P \to M$ a projective cover of $M$. We define 
		\[\Omega_B (M) = \Ker (p_M).\]
		This defines an autoequivalence of $\underbar{$\mod$} \, B$ called the loop-space functor.

		The i-th syzygy functor $\Omega_i$ defines a functor $\underbar{\APC}(B) \to \underbar{\mod} \, B$. This functor maps the inverse of the shift to the loop-space functor i.e. :
		\[\forall X \in \underbar{\APC}(B), \; \Omega_B(\Omega_i(X)) \simeq \Omega_i(X[-1])\]
	
		The composition of canonical functors 
		\[ \mod \, B \to \Dcat^b(B) \to \Dcat_{sg}(B)\]
		induces a unique functor $\iota_B : \underbar{\mod} \, B \to \Dcat_{sg}(B)$ that sends the loop-space functor to the inverse of the shift :
		\[\forall M \in \underbar{\mod} \, B, \; \iota_B(\Omega_B(M)) \simeq \iota_B(M)[-1]\]
	
	\begin{remark}
		When applying $\Omega_0$ to a object $P^\bullet$ in $\underbar{APC}(B)$, the $B$-module that is obtained is maximal Cohen-Macaulay because for all positive integers $i$ and $j$ with $j$ large enough $\Ext^i(\Omega_0 P^\bullet,B) = \Ext^{i+j}(\Omega_{-j}P^\bullet,B) = 0$. 
	\end{remark}
	
	We now have the following theorem giving us the desired equivalence.
	
	\begin{theorem}{\cite{Bu}}
		Let $B$ be a Gorenstein algebra. Then :
		\begin{itemize}
			\item The syzygy functor $\Omega_0$ induces a triangulated equivalence of categories, denoted by the same symbol :
			\[\Omega_0 : \underbar{\APC(B)} \to \barCM(B).\]
			\item The restriction of $\iota_B$ to $\barCM(B)$ defines a triangulated equivalence of categories :
			\[\iota_B : \barCM(B) \to \Dcat_{sg}(B).\]
		\end{itemize}
	\end{theorem}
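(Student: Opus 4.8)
The plan is to treat the two equivalences separately, and in each case to verify that the functor is well defined and triangulated, then essentially surjective, then fully faithful; a triangulated functor with these three properties is an equivalence of triangulated categories.

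For $\Omega_0 : \underbar{\APC}(B) \to \barCM(B)$ I would first note well-definedness: $\Omega_0(P^\bullet)$ is maximal Cohen-Macaulay for every acyclic complex of projectives $P^\bullet$ (the Remark above), and a null-homotopy of a chain map $P^\bullet \to Q^\bullet$ restricts on the degree-one cycle modules to a map factoring through the projective $Q^0$, so $\Omega_0$ descends to the homotopy category and lands in $\barCM(B)$. For triangulatedness, $\underbar{\APC}(B)$ is a full triangulated subcategory of $\Kcat(\proj B)$ (shifts and mapping cones of acyclic complexes of projectives are acyclic), every triangle in it is isomorphic to one induced by a termwise-split exact sequence $0 \to X^\bullet \to Y^\bullet \to Z^\bullet \to 0$ of such complexes, and applying $\Omega_0$ to the resulting short exact sequence of cycle modules yields a conflation in the Frobenius category $\CM(B)$, hence a triangle in $\barCM(B)$; one also reads off $\Omega_0(X^\bullet[1]) \cong \Sigma\, \Omega_0(X^\bullet)$ from the conflation $0 \to \Omega_0(X^\bullet) \to X^1 \to \Omega_0(X^\bullet[1]) \to 0$ whose middle term is projective-injective in $\CM(B)$. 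Essential surjectivity is the substantial point: given $M \in \CM(B)$, I would build a complete resolution by splicing a projective resolution of $M$ with the $\Hom_B(-,B)$-dual of a projective resolution of the left module $M^\vee := \Hom_B(M,B)$, using that $0 \to M^{\vee\vee} \to Q_0^\vee \to Q_1^\vee \to \cdots$ is acyclic (as $\Ext^{>0}_B(M^\vee,B)=0$) and the reflexivity isomorphism $M \cong M^{\vee\vee}$ to glue at degree zero; both inputs rest on two-sided Gorensteinness. Full faithfulness is then the comparison theorem for complete resolutions: any morphism $M \to N$ of maximal Cohen-Macaulay modules lifts to a chain map of complete resolutions, uniquely up to homotopy, and such a chain map is null-homotopic exactly when $\Omega_0$ of it factors through a projective — in the lift, the negative part is the ordinary projective-resolution comparison (needing only acyclicity of the target), while extending past degree zero uses that the syzygies of a complete resolution are maximal Cohen-Macaulay, i.e. $\Ext^1_B(\Omega_i Q^\bullet, B)=0$, which lets one extend a partial lift along an inclusion $\Im(d^i) \hookrightarrow Q^{i+1}$ of a submodule into a projective.

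For $\iota_B : \barCM(B) \to \Dcat_{sg}(B)$, recall that the composite $\mod B \to \Dcat^b(B) \to \Dcat_{sg}(B)$ annihilates projectives and so factors through $\underbar{\mod}\,B$ as $\iota_B$, which sends the loop-space functor to $[-1]$ because the sequence $0 \to \Omega_B M \to P \to M \to 0$ becomes a triangle in $\Dcat_{sg}(B)$ with $P \cong 0$; its restriction to $\barCM(B)$ is triangulated since a conflation of maximal Cohen-Macaulay modules is in particular a short exact sequence in $\mod B$, inducing the corresponding triangle in $\Dcat_{sg}(B)$. For essential surjectivity, the image is a triangulated subcategory containing every simple module $S$: since $B$ has finite injective dimension, $\Omega_B^n S$ is maximal Cohen-Macaulay for $n$ large and $S \cong (\Omega_B^n S)[n]$ in $\Dcat_{sg}(B)$, and the simples generate $\Dcat^b(B)$ as a thick subcategory, so their images generate $\Dcat_{sg}(B)$. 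For full faithfulness I would invoke the standard description $\Hom_{\Dcat_{sg}(B)}(M,N) \cong \varinjlim_n \Hom_{\underbar{\mod}\,B}(\Omega_B^n M, \Omega_B^n N)$, the colimit along the maps induced by $\Omega_B$ (valid for all $B$-modules, since inverting the perfect complexes lets one absorb projective tails into syzygies; see \cite{Bu}): when $M$ and $N$ are maximal Cohen-Macaulay, $\Omega_B$ is an autoequivalence of $\barCM(B)$ with quasi-inverse the cosyzygy functor $\Sigma$ of the Frobenius structure, so every transition map $\Hom_{\barCM(B)}(\Omega_B^n M, \Omega_B^n N) \to \Hom_{\barCM(B)}(\Omega_B^{n+1} M, \Omega_B^{n+1} N)$ is an isomorphism, the colimit collapses to $\Hom_{\barCM(B)}(M,N)$, and unwinding the identifications shows the colimit map is $\iota_B$ itself.

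I expect the main obstacle to be the essential-surjectivity step for the first equivalence — the construction of complete resolutions of arbitrary maximal Cohen-Macaulay modules — since this is exactly where the two-sided Gorenstein hypothesis does real work, through the reflexivity of maximal Cohen-Macaulay modules and the stability of the maximal Cohen-Macaulay property under $\Hom_B(-,B)$. The fully faithful steps, although the most computational, are routine once the comparison theorem for complete resolutions and the colimit description of morphisms in $\Dcat_{sg}(B)$ are in place; both are classical and could simply be cited from \cite{Bu}.
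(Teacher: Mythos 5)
The paper does not prove this statement; it cites it from \cite{Bu}. Your argument is correct and reconstructs Buchweitz's proof essentially as it appears in that source --- complete resolutions built by splicing a projective resolution of $M$ with the $\Hom_B(-,B)$-dual of a projective resolution of $M^\vee$ (where two-sided Gorensteinness enters through reflexivity of maximal Cohen-Macaulay modules and stability of the MCM property under duality), the comparison theorem for complete resolutions to get full faithfulness of $\Omega_0$, and the colimit description of $\Hom$ in $\Dcat_{sg}(B)$ together with the fact that $\Omega_B$ is an autoequivalence of $\barCM(B)$ for full faithfulness of $\iota_B$ --- so it matches the cited source rather than diverging from anything in the paper.
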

	
	\begin{remark}
		These equivalences can be visualized on the following diagram :
		\begin{center}
			\begin{tikzpicture}[scale = 1.2]
				\node (1) at (0,4) {$0$};
				\node (2) at (2,4) {$\proj \, B$};
				\node (3) at (4,4) {CM(B)};
				\node (4) at (8,4) {$\barCM(B)$};
				\node (5) at (10,4) {$0$};
				\node (6) at (-2,2) {$0$};
				\node (7) at (0,2) {$\proj \, B$};
				\node (8) at (2,2) {$\mod \, B$};
				\node (9) at (5,2) {$\underbar{\mod} \, B$};
				\node (10) at (6.5,2) {$0$};
				\node (11) at (10,2) {$\underbar{\APC}(B)$};
				\node (12) at (0,0) {$0$};
				\node (13) at (2,0) {$\per \, B$};
				\node (14) at (4,0) {$\Dcat^b(B)$};
				\node (15) at (8,0) {$\Dcat_{sg}(B)$};
				\node (16) at (10,0) {$0$};
				
				\draw[-to] (1) to (2);
				\draw[-to] (2) to (3);
				\draw[-to] (3) to (4);
				\draw[-to] (4) to (5);
				\draw[-to] (6) to (7);
				\draw[-to] (7) to (8);
				\draw[-to] (8) to (9);
				\draw[-to] (9) to (10);
				\draw[-to] (12) to (13);
				\draw[-to] (13) to (14);
				\draw[-to] (14) to (15);
				\draw[-to] (15) to (16);
				\draw[-to] (2) to (7);
				\draw[-to] (3) to (8);
				\draw[-to] (4) to (9);
				\draw[-to] (4) to node[right,midway] {$\iota_B$} (15);
				\draw[-to] (11) to node[right,midway] {$\Omega_0$}(4);
				\draw[-to] (7) to (13);
				\draw[-to] (8) to (14);
				\draw[-to] (9) to node[left,midway] {$\iota_B$} (15);
				\draw[-to] (11) to node[right, midway] {$\iota_B \circ \Omega_0$}(15);
			\end{tikzpicture}
		\end{center}
	\end{remark}
	
	\begin{remark}
		The (quasi)-inverse of $\iota_B : \barCM(B) \to \Dcat_{sg}(B)$ is computed as follows :
		\begin{itemize}
			\item For a complex $X$ of $\Dcat^b(B)$, choose a projective resolution $P \to X$.
			\item Truncate the complex $P$ in degree $k$ where $k$ is an integer satisfying 
			\[k \leq \min \{ i \in \Z | H^i(P) = H^i(X) \neq 0 \} - \textup{injdim} \, R\]
			to obtain a complex $\sigma_{\leq k}(P)$ such that $\Omega_k(\sigma_{\leq k}(X))$ is maximal Cohen-Macaulay.
			\item Extend $\sigma_{\leq k}(P)$ to an acyclic complex of projective modules $\sigma_{\leq k}(P)^\#$.
			\item Take the 0th syzygy of this acyclic complex to obtain :
			\[\iota_B^{-1}(X) = \Omega_0(\sigma_{\leq k}(P)^\#)\]
		\end{itemize}
		This computation also gives the (quasi)-inverse of $\iota_B \circ \Omega_0$
	\end{remark}
	
	The aim of the rest of this section is now to prove the following result.
	
	\begin{theorem}
		The following diagram commutes.
		
		\begin{center}
			\begin{tikzpicture}[scale = 1.2]
				\node (1) at (0,0) {$\Dcat_{sg}(A^\ltimes)$};
				\node (2) at (0,2) {$\Dcat^b(A)$};
				\node (3) at (4,0) {$\barCM(A^\ltimes)$};
				\node (4) at (4,2) {$\barCM^\Z (A^\ltimes)$};
				
				\draw[->] (2) to node[above,midway] {$\Gamma$}(4);
				\draw[->] (1) to node[above,midway] {$\iota_{A^\ltimes}^{-1}$} (3);
				\draw[->] (2) to node[left, midway] {$K$} (1);
				\draw[->] (4) to node[right, midway] {$\mathcal{F}$} (3);
			\end{tikzpicture}
		\end{center}
		where 
		\begin{itemize}
			\item $\mathcal{F}$ is the functor "forgetting" the grading, induced by the functor $\grmod \, A^\ltimes \to \mod \, A^\ltimes$ ;
			\item $\Gamma$ is the functor from Proposition 1.2.12 ;
			\item $K$ is the triangulated functor given by Theorem 1.1.4 ;
			\item $\iota_{A^\ltimes}^{-1}$ is the equivalence of cateogories given above.
		\end{itemize} 
	\end{theorem}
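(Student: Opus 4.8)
The plan is to reduce the statement to a computation on bounded complexes of projectives and to use the description of $\iota_{A^\ltimes}^{-1}$ recalled in Remark 1.3.3. Since $A$ has finite global dimension, every object of $\Dcat^b(A)$ is isomorphic to a bounded complex $(P^\bullet,d^\bullet)$ of finitely generated projective $A$-modules and every morphism is represented by a morphism of complexes, so it is enough to construct an isomorphism $\iota_{A^\ltimes}^{-1}(K(X))\simeq\mathcal{F}(\Gamma(X))$ for $X=(P^\bullet,d^\bullet)$, natural in morphisms of complexes and killing homotopies, hence descending to $\Kcat^b(\proj A)=\Dcat^b(A)$. For such an $X$, Proposition 1.2.12 gives $\mathcal{F}(\Gamma(X))=(\bigoplus_i P^i,\sum_i d^i)$: that is, the $A^\ltimes$-module $M_X$ whose restriction to $A$ is $\bigoplus_i P^i$ and on which $\varepsilon$ acts by the total differential. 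Writing $K$ for the composite $\Dcat^b(A)\to(\Dcat^b(A)/[1])_\Delta\to\Dcat_{sg}(A^\ltimes)$, the proof of Theorem 1.1.4 identifies it with $\pi_{A^\ltimes}\circ\iota_A$, so $K(X)$ is the class in $\Dcat_{sg}(A^\ltimes)$ of the complex $\iota_A(P^\bullet)$ whose terms are the $P^i$ with $\varepsilon$ acting as $0$.

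The heart of the argument is an explicit complete resolution of $\iota_A(P^\bullet)$. Over $A^\ltimes$ each module $P^i$ (with $\varepsilon$ acting as $0$) has the $2$-periodic projective resolution $\cdots\xrightarrow{\cdot\varepsilon}P^i\otimes_A A^\ltimes\xrightarrow{\cdot\varepsilon}P^i\otimes_A A^\ltimes\to P^i\to 0$. These fit into a double complex with vertical differentials $\cdot\varepsilon$ and horizontal differentials $d^i\otimes\id$; since these commute, the usual signs make their totalization a genuine complex. Letting the resolution direction range over all of $\Z$, the totalization $\mathbf{R}^\bullet$ is a complex of projective $A^\ltimes$-modules with $\mathbf{R}^n=\bigoplus_i P^i\otimes_A A^\ltimes$ for every $n$; as its rows are exact and only finitely many columns are nonzero, $\mathbf{R}^\bullet$ is acyclic, and for $n$ sufficiently negative it coincides with the projective resolution of $\iota_A(P^\bullet)$ obtained by totalizing only the columns in resolution-degree $\le 0$. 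Hence $\mathbf{R}^\bullet$ is a legitimate choice of $\sigma_{\le k}(P)^\#$ in Remark 1.3.3 and $\iota_{A^\ltimes}^{-1}(K(X))=\Omega_0(\mathbf{R}^\bullet)$. A direct computation of $\Coker(\mathbf{R}^{-1}\to\mathbf{R}^0)$, using the differential to eliminate the $\varepsilon$-components, identifies this cokernel with $(\bigoplus_i P^i,\pm\sum_i d^i)$, which is isomorphic to $M_X$ through $\bigoplus_i(-1)^i\id_{P^i}$. Every construction above is functorial in morphisms of complexes $P^\bullet\to Q^\bullet$ (via $f^i\otimes\id$) and sends homotopic morphisms to equal morphisms in $\barCM(A^\ltimes)$, which yields the desired natural isomorphism; its compatibility with the suspensions follows since $\Gamma$, $K$ and $\iota_{A^\ltimes}^{-1}$ are triangulated and so is $\mathcal{F}$, forgetting the grading carrying the Frobenius structure of $\CM^\Z(A^\ltimes)$ onto that of $\CM(A^\ltimes)$.

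The step I expect to require the most care is making the identification $\Omega_0(\mathbf{R}^\bullet)\simeq M_X$ genuinely natural and compatible with the triangulated structures: one must fix the sign conventions in the totalization and in the suspension of $\barCM^\Z(A^\ltimes)$ coherently, keeping in mind the warning of the remark following Proposition 1.2.12 that the naive comparison introduces a sign on the differential. A minor point to settle is that any acyclic completion computes $\iota_{A^\ltimes}^{-1}$ — this is contained in Theorem 1.3.2, the completion being unique up to homotopy — so that the particular choice $\mathbf{R}^\bullet$ is admissible.
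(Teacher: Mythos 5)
Your proposal is correct and follows essentially the same route as the paper's own proof: compute $K(P^\bullet)$ via a double complex whose columns are the $2$-periodic resolutions $P^i\otimes_A A^\ltimes\xrightarrow{\cdot\varepsilon}P^i\otimes_A A^\ltimes$ and whose rows carry $d^\bullet$, totalize, then take the cokernel to identify $\iota_{A^\ltimes}^{-1}K(P^\bullet)$ with $(\bigoplus_i P^i,\sum_i d^i)=\mathcal{F}\Gamma(P^\bullet)$, and finally check the identification is compatible with morphisms. The one organizational difference is sign handling: the paper avoids Koszul signs entirely by using $d^i\otimes q$ (with $q(a,x)=(a,-x)$) as horizontal differential, exploiting $\sigma q+q\sigma=0$ so the squares anti-commute on the nose, and then pins down $\Omega_0$ of the totalization by a dimension count (Corollaries 1.3.9 and 1.3.12) together with an explicit section $\delta^*$ of the surjection onto $(\bigoplus_i P^i,\sum_i d^i)$; you instead keep $d^i\otimes\id$ and push the signs into the totalization and a final twist $\bigoplus_i(-1)^i\id_{P^i}$. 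Both work, but your phrase "a direct computation... eliminates the $\varepsilon$-components" is the step you should make explicit — the paper's $\delta=\id\otimes\pi+\sum_i d^i\otimes\pi_2$ together with the dimension count is the concrete form of that step, and without it the identification of $\Coker(\mathbf{R}^{-1}\to\mathbf{R}^0)$ is asserted rather than proved.
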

	
	The proof of this theorem requires only to show that $F := \iota_{A^\ltimes}^{-1} \circ K$ and ${\mathcal{F}} \circ \Gamma$ are the same functors. We already have an explicit description of ${\mathcal{F}} \circ \Gamma$. We will now give one for $F$.
	
	\begin{proposition}
		Let $P$ be a projective $A$-module seen as an object of $\Dcat^b(A)$ concentrated in degree 0. Its image by the functor $K$ in $\Dcat_{sg}(A^{\ltimes})$ is isomorphic to :
		\[\dots \to P \otimes A^\ltimes \to P \otimes A^\ltimes \to P \otimes A^\ltimes \to 0 \to \dots\]
		where the differential is $id_P \otimes \sigma$.
	\end{proposition}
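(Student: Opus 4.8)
The plan is to unwind the definition of $K$ and then exhibit a manifestly $1$-periodic projective resolution of the relevant $A^\ltimes$-module. By the commutative diagram in the proof of Theorem 1.1.4, the composite $\Dcat^b(A)\to(\Dcat^b(A)/[1])_\Delta\xrightarrow{K}\Dcat_{sg}(A^\ltimes)$ equals $\pi_{A^\ltimes}\circ\iota_A$, where $\iota_A$ is restriction of scalars along the algebra surjection $\pi\colon A^\ltimes\twoheadrightarrow A$. Since this restriction is exact, $K(P)$ is the image in $\Dcat_{sg}(A^\ltimes)$ of the complex concentrated in degree $0$ given by $\iota_A(P)$, that is, $P$ with $\varepsilon$ acting by $0$. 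So it suffices to prove that the complex in the statement is a projective resolution of $\iota_A(P)$: then it represents $\iota_A(P)$ in $\Dcat^b(A^\ltimes)$, and hence $K(P)=\pi_{A^\ltimes}(\iota_A(P))$ in $\Dcat_{sg}(A^\ltimes)$.

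To verify this I would pass to the category $\Dif\,A$ via the equivalence $\Psi$ of Proposition 1.2.9. There $\iota_A(P)$ corresponds to $(P,0)$, the module $P\otimes A^\ltimes$ corresponds to the projective–injective object of Proposition 1.2.11 built on $P\oplus P$, and, because $\sigma$ is exactly right multiplication by $\varepsilon$ on $A^\ltimes$, the map $\id_P\otimes\sigma$ corresponds under $\Psi$ to the differential of $P\otimes A^\ltimes$. One then writes down the augmentation $\mu\colon P\otimes A^\ltimes\to\iota_A(P)$, $p\otimes(a,x)\mapsto pa$ (i.e. $\id_P\otimes\pi$), and checks in the $P\oplus P$–model that $\mu$ is the projection onto the first summand, that $\ker\mu$ is the second summand equipped with the zero differential — hence isomorphic to $(P,0)=\iota_A(P)$ again — and that the differential of $P\otimes A^\ltimes$ has image and kernel both equal to that second summand. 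This gives at once that the sequence
\[\cdots\xrightarrow{\ \id_P\otimes\sigma\ }P\otimes A^\ltimes\xrightarrow{\ \id_P\otimes\sigma\ }P\otimes A^\ltimes\xrightarrow{\ \mu\ }\iota_A(P)\to 0\]
is exact, that it is $1$-periodic with $\Omega^1_{A^\ltimes}(\iota_A(P))\cong\iota_A(P)$, and that every connecting morphism is $\id_P\otimes\sigma$. Projectivity of $P\otimes A^\ltimes$ over $A^\ltimes$ follows from projectivity of $P$ over $A$ together with the freeness of $A^\ltimes$ over $A$. If desired, one can cross-check the outcome by feeding this periodic resolution into the recipe of Remark 1.3.7 computing $\iota_{A^\ltimes}^{-1}$.

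The computations here are entirely routine; the only real content is the periodicity of the resolution. The one place to be careful is the matching of conventions — left versus right module structures on $A^\ltimes$, and which summand of $P\oplus P$ the differential lands in — needed to identify the connecting map with $\id_P\otimes\sigma$ on the nose rather than with a shifted or sign-twisted variant; this is also why the earlier Remark pointing out that $\tilde\Gamma\circ[1]$ and $(1)\circ\tilde\Gamma$ differ is worth keeping in mind.
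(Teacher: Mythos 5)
Your proof is correct and takes essentially the same route as the paper's. Both arguments unwind $K$ to the composite $\pi_{A^\ltimes}\circ\iota_A$, identify $K(P)$ with the image of $P_{|A^\ltimes}$ concentrated in degree $0$, and then exhibit the stated complex as a $1$-periodic projective resolution of $P_{|A^\ltimes}$. The paper does this by displaying the short exact sequence
\[0\to P_{|A^\ltimes}\xrightarrow{\ \id_P\otimes(\sigma\circ\iota)\ }P\otimes_A A^\ltimes\xrightarrow{\ \id_P\otimes\pi\ }P_{|A^\ltimes}\to 0\]
and splicing, whereas you check the same exactness by transporting everything through $\Psi$ into $\Dif\,A$ and computing with the $P\oplus P$ model; this is the same computation in a different coordinate system. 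Your caution about left/right conventions and about which summand the differential lands in is well placed — in the $P\oplus P$ model one finds $\ker\mu=\Im\,d=\ker\,d$ all equal to the second summand, which is exactly what makes the periodicity argument close — but there is no substantive difference in method or content.
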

	
	\begin{proof}
		Let $P$ be a projective $A$-module. The functor $\Dcat^b(A) \to \Dcat_{sg}(A^\ltimes)$ is by definition induced by the functor $\Dcat^b(A) \to \Dcat^b(A^\ltimes)$ which is in turn induced by the functor $\mod \, A \to \mod \, A^\ltimes$. Thus the image of the complex $\dots \to 0 \to P \to 0 \to \dots$ in $\Dcat^b(A^\ltimes)/ \per \, A^\ltimes$ is :
		\[\dots \to 0 \to P_{|A^\ltimes} \to 0 \to \dots\]
		We can now compute its projective resolution by computing the projective resolution of $P_{|A^\ltimes}$ in $\mod \, A^\ltimes$. By observing that $P_{|A^\ltimes} = P \otimes_A A_{|A^\ltimes} = P \otimes_A \pi(A^\ltimes)$, we have a surjection:
		\[P \otimes_A A^\ltimes \overset{\id_P \otimes \pi}{\longrightarrow} P_{|A^\ltimes} \]
		We now see that this surjection fits into a short exact sequence :
		\[0 \to P_{|A^\ltimes} \overset{\id_P \otimes (\sigma \circ \iota)}{\longrightarrow} P \otimes_A A^\ltimes \overset{\id_P \otimes \pi}{\longrightarrow} P_{|A^\ltimes} \to 0\]
		where $\iota : A \to A^\ltimes $ is defined by $\iota(x) = (x,0)$.
		Thus we can complete the projective resolution of $P_{|A^\ltimes}$ and we have the desired isomorphism in $\Dcat_{sg} (A^\ltimes)$.
	\end{proof}
	
	\begin{corollary}
		Let $P$ be a projective $A$-module, then $F(P) = P_{|A^\ltimes}$ where $F = \iota_{A^\ltimes}^{-1} \circ K$ is as in Theorem 1.3.5.
	\end{corollary}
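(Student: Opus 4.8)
The plan is to feed the description of $K(P)$ supplied by Proposition 1.3.7 into the recipe for $\iota_{A^\ltimes}^{-1}$ recalled in Remark 1.3.6. By Proposition 1.3.7, $K(P)$ is represented in $\Dcat_{sg}(A^\ltimes)$ by the left-unbounded complex with all terms $P \otimes_A A^\ltimes$, all differentials $\id_P \otimes \sigma$, and last term in degree $0$; and, as its proof shows, this complex is precisely a projective resolution of the $A^\ltimes$-module $P_{|A^\ltimes}$, obtained by splicing copies of the short exact sequence $0 \to P_{|A^\ltimes} \to P \otimes_A A^\ltimes \to P_{|A^\ltimes} \to 0$ of Proposition 1.3.7, via the identity $\sigma = (\sigma \circ \iota)\circ\pi$. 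So the first step is just to record this.

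Next I would extend this resolution to a complete resolution. The obvious candidate is the two-sided $1$-periodic complex $Z^\bullet$ with all terms $P \otimes_A A^\ltimes$ and all differentials $\id_P \otimes \sigma$; it agrees with the given resolution in non-positive degrees, and the factorisation $\id_P \otimes \sigma = (\id_P \otimes (\sigma\circ\iota))\circ(\id_P\otimes\pi)$ together with the short exact sequence above shows $\Ker = \Im$ at every position, so $Z^\bullet$ is acyclic. The crucial point is that $P_{|A^\ltimes}$ is maximal Cohen-Macaulay by Proposition 1.2.8, its restriction to $A$ being the projective module $P$, so that $Z^\bullet$ genuinely is a complete resolution of $P_{|A^\ltimes}$ and every syzygy of the given resolution in non-positive degrees already lies in $\CM(A^\ltimes)$; this lets me identify $\iota_{A^\ltimes}^{-1}(K(P))$ with $\Omega_0(Z^\bullet)$ with no truncation needed. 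Finally $\Omega_0(Z^\bullet) = \Coker(\id_P \otimes \sigma)$, and $\id_P \otimes \pi$ induces an isomorphism of $A^\ltimes$-modules from this cokernel onto $P_{|A^\ltimes}$, which gives $F(P) = P_{|A^\ltimes}$.

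The main obstacle is bookkeeping rather than anything conceptual: I must check that the procedure of Remark 1.3.6, namely truncate at some $k \ll 0$, complete to an acyclic complex of projectives, then take $\Omega_0$, really reproduces $Z^\bullet$ up to the relevant equivalence. This rests on the uniqueness up to homotopy in $\underbar{\APC}(A^\ltimes)$ of completions of a given left-unbounded projective complex, and on the observation that since $P_{|A^\ltimes}$ is already maximal Cohen-Macaulay the truncation is vacuous here. I would also double-check the degree and sign conventions, compare the remark following Proposition 1.2.12, so that $\Omega_0$ picks out exactly $\Coker(\id_P\otimes\sigma)$, with no shift or sign twist. There is also a shortcut: since $\iota_{A^\ltimes}$ is by construction induced by $\mod \, A^\ltimes \to \Dcat^b(A^\ltimes) \to \Dcat_{sg}(A^\ltimes)$, and $K(P) = \pi_{A^\ltimes}(\iota_A(P))$ is just the image in $\Dcat_{sg}(A^\ltimes)$ of the maximal Cohen-Macaulay module $P_{|A^\ltimes}$, one gets $F(P) = \iota_{A^\ltimes}^{-1}(\iota_{A^\ltimes}(P_{|A^\ltimes})) = P_{|A^\ltimes}$ immediately; I would nonetheless keep the argument via Proposition 1.3.7 as the main one, as it makes the complex model of $F$ explicit for later use.
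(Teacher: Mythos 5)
Your proof is correct and follows essentially the same route as the paper, which compresses the entire argument into the single observation $\Coker(\id_P \otimes \sigma) \simeq P_{|A^\ltimes}$; you have simply unpacked the intermediate steps — the extension of the resolution from Proposition 1.3.7 to a complete resolution, the irrelevance of the truncation step in Remark 1.3.6 because $P_{|A^\ltimes}$ is already maximal Cohen-Macaulay by Proposition 1.2.8, and the identification of $\Omega_0$ with the cokernel of $\id_P \otimes \sigma$. The shortcut you mention at the end (that $K(P) = \iota_{A^\ltimes}(P_{|A^\ltimes})$ on the nose since $P_{|A^\ltimes} \in \CM(A^\ltimes)$) is also valid and arguably the cleanest way to say it.
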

	
	\begin{proof}
		It is enough to observe that $\Coker(\id_P \otimes \sigma) \simeq P_{|A^\ltimes}$.
	\end{proof}
	
	Using this description of the image of projective $A$-modules, we can now infer the corresponding result for any bounded complex of projectives.
	
	\begin{proposition}
		Let $P^\bullet$ be a bounded complexe of projective $A$-modules :
		\[P^\bullet = \dots \to 0 \to P^{-n} \overset{d^{-n}}{\to} P^{-(n-1)} \to \dots \to P^{-1} \overset{d^{-1}}{\to} P^0 \to 0 \to \dots\]
		Then its image by in $\Dcat_{sg}(A^\ltimes)$ is isomorphic to :
		\[\dots \to \bigoplus_{i=0}^n P^{-i} \otimes A^\ltimes \overset{\varphi}{\to} \bigoplus_{i=0}^n P^{-i} \otimes A^\ltimes  \overset{\varphi^n}{\to} \bigoplus_{i=0}^{n-1} P^{-i} \otimes A^\ltimes \to \dots \to P^{-1} \otimes A^\ltimes \oplus P^0\otimes A^\ltimes \overset{\varphi^1}{\to} P^0\otimes A^\ltimes \to 0 \to \dots \]
		where :
		\begin{itemize}
			\item for $i \in \{1,\dots,n\}$, $\varphi^i = \displaystyle\sum_{j=0}^{i-1} id_{P^{-j}}\otimes \sigma + \displaystyle\sum_{j=0}^i d^{-j} \otimes q$
			\item $P^0 \otimes A^\ltimes$ is in the same degree as $P^0$ in $P^\bullet$.
			\item $\varphi = \displaystyle\sum_{j=0}^n id_{P^j}\otimes \sigma + \displaystyle\sum_{j=0}^n d^j \otimes q$
		\end{itemize}
		Furthermore, $F(P^\bullet) \simeq \Coker(\varphi)$ in $\barCM(A^\ltimes)$.
	\end{proposition}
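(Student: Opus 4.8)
The plan is to argue by induction on the length $n$ of $P^\bullet$, using that $K$ is a triangulated functor together with the description of $K$ on a single projective module (Proposition~1.3.6). The base case $n=0$ is exactly Proposition~1.3.6, and there $\Coker(\varphi)=\Coker(\id_{P^0}\otimes\sigma)\simeq P^0_{|A^\ltimes}=F(P^0)$ by the preceding corollary.

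For the inductive step I would let $Q^\bullet$ be the subcomplex of $P^\bullet$ obtained by deleting the term $P^{-n}$ in degree $-n$, so that $Q^\bullet$ has length $n-1$ and the same differentials $d^{-(n-1)},\dots,d^{-1}$. Since all terms are projective, the short exact sequence of complexes $0\to Q^\bullet\to P^\bullet\to P^{-n}[n]\to 0$ is split in each degree and hence gives a distinguished triangle $Q^\bullet\to P^\bullet\to P^{-n}[n]\overset{\partial}{\to}Q^\bullet[1]$ in $\Dcat^b(A)\simeq\Kcat^b(\proj\,A)$, with $\partial$ represented by the chain map $d^{-n}\colon P^{-n}\to P^{-(n-1)}$. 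Applying $K$ and rotating yields, in $\Dcat_{sg}(A^\ltimes)$,
\[K(P^\bullet)\simeq\textup{Cone}\bigl(K(\partial)[-1]\colon K(P^{-n})[n-1]\longrightarrow K(Q^\bullet)\bigr),\]
where $K(P^{-n})$ is the explicit $1$-periodic complex of Proposition~1.3.6 and $K(Q^\bullet)$ is, by the induction hypothesis, the complex of the statement built from $P^{-(n-1)}\to\dots\to P^0$.

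The key step is to represent the connecting morphism $K(\partial)$ by an honest chain map between these explicit complexes of projective $A^\ltimes$-modules. Restriction along $\pi\colon A^\ltimes\to A$ turns $\partial$ back into (the class of) $d^{-n}$, and using $\pi q=\pi$ together with the explicit resolution $\dots\to P\otimes A^\ltimes\overset{\id\otimes\sigma}{\to}P\otimes A^\ltimes\overset{\id\otimes\pi}{\to}P_{|A^\ltimes}\to 0$ from the proof of Proposition~1.3.6, one checks that a lift of $K(\partial)[-1]$ is given in every degree by $d^{-n}\otimes q$ from the summand $P^{-n}\otimes A^\ltimes$ into the summand $P^{-(n-1)}\otimes A^\ltimes$. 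Forming the mapping cone, the term in degree $-k$ becomes $P^{-n}\otimes A^\ltimes\oplus\bigoplus_{i=0}^{\min(k,n-1)}P^{-i}\otimes A^\ltimes=\bigoplus_{i=0}^{\min(k,n)}P^{-i}\otimes A^\ltimes$, and its differential is the sum of the differential of $K(Q^\bullet)$, the term $\pm\,\id_{P^{-n}}\otimes\sigma$ inherited from $K(P^{-n})[n]$, and the term $d^{-n}\otimes q$ coming from the connecting map. Conjugating by the automorphism of the complex that rescales the copies of $P^{-n}\otimes A^\ltimes$ by alternating signs (to replace $\pm\,\id\otimes\sigma$ by $\id\otimes\sigma$ and to normalise the sign of the $\partial$-term) turns this differential into exactly $\varphi^k$ for $1\le k\le n$ and $\varphi$ for $k\ge n$; the appearance of $q$ rather than a sign is precisely what makes the total differential square to zero, via $\sigma q+q\sigma=0$. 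This identifies $K(P^\bullet)$ with the asserted complex $C_P^\bullet$.

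Finally $F(P^\bullet)=\iota_{A^\ltimes}^{-1}(K(P^\bullet))$ is obtained from $C_P^\bullet$ by the recipe for the inverse of $\iota_{A^\ltimes}$ recalled above: $C_P^\bullet$ is already a complex of projective $A^\ltimes$-modules quasi-isomorphic to $P^\bullet_{|A^\ltimes}$, hence with cohomology concentrated in degrees $-n,\dots,0$; in particular it is exact in all degrees $<-n$, where it coincides with the $1$-periodic complex with term $\bigoplus_{i=0}^n P^{-i}\otimes A^\ltimes$ and differential $\varphi$, so that complex is exact, its $1$-periodic continuation in both directions is an acyclic complex of projectives, and its $0$-th syzygy is $\Coker(\varphi)$; thus $F(P^\bullet)\simeq\Coker(\varphi)$ in $\barCM(A^\ltimes)$. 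The main obstacle is the third step: fixing a chain-level representative of $K(\partial)$ between the two explicit resolutions and carrying out the sign- and $q$-bookkeeping of the mapping cone so that it matches the stated formulas for $\varphi^i$ and $\varphi$ on the nose. One could equivalently avoid the induction by realising $C_P^\bullet$ directly as the total complex of the bicomplex whose $j$-th column is the $1$-periodic resolution of $P^{-j}_{|A^\ltimes}$ and whose horizontal maps are the $d^{-j}\otimes q$, in which case one must instead check that this total complex resolves $P^\bullet_{|A^\ltimes}$ — which again comes down to the columns being resolutions and to $\sigma q+q\sigma=0$.
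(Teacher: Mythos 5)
Your proposal is correct, and in fact contains two arguments: the primary one, an induction on the length of $P^\bullet$ using the degreewise-split triangle $Q^\bullet \to P^\bullet \to P^{-n}[n] \to Q^\bullet[1]$, and the remark at the very end about totalizing a bicomplex. That final remark is precisely the paper's proof: the paper assembles the bicomplex whose rows are $P^{-j}\otimes A^\ltimes$ with horizontal maps $d^{-j}\otimes q$, vertical maps $\id\otimes\sigma$, and a bottom row $P^\bullet$ received via $\id\otimes\pi$; since each column is the resolution of Proposition~1.3.6 (hence exact), the total complex $C^\bullet$ is acyclic, and $C^\bullet$ is visibly the cone of the chain map $\id\otimes\pi\colon R^\bullet\to P^\bullet$, so $R^\bullet\xrightarrow{\sim} P^\bullet_{|A^\ltimes}$ in $\Dcat^b(A^\ltimes)$. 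This sidesteps entirely the step you flag as your main obstacle — choosing a chain-level lift of the connecting morphism $K(\partial)$, and then normalizing the alternating signs introduced by the shift and the cone so that the differential lands exactly on $\varphi^k$. Your inductive route does work, and it is a reasonable triangulated-category reflex, but the sign and $q$ bookkeeping it demands (you need $d^{-n}\otimes q$ to be an honest chain map between the shifted explicit resolutions, and you need the rescaling automorphism to eliminate signs consistently across degrees) is exactly what the bicomplex construction absorbs automatically through $\sigma q + q\sigma = 0$ and the anticommuting squares. One small slip in your cone computation: the summand $P^{-n}\otimes A^\ltimes$ only appears in degrees $-k$ with $k\ge n$, since $(K(P^{-n})[n-1])^{-k+1}$ is nonzero only for $k\ge n$; the corrected formula $\bigoplus_{i=0}^{\min(k,n)}P^{-i}\otimes A^\ltimes$ that you then write is right. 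Finally, you do spell out the ``Furthermore'' step — that $R^\bullet$ is eventually $1$-periodic with differential $\varphi$, extends to an acyclic complex of projectives, and its $0$th syzygy is $\Coker(\varphi)$ — which the paper leaves implicit as a consequence of the recipe for $\iota_{A^\ltimes}^{-1}$ in Remark~1.3.4.
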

	
	\begin{proof}
		To compute the image of $P^\bullet$ in $\Dcat_{sg}(A^\ltimes)$, we need to compute its projective resolution. To do so, we use the projective resolution of the $(P^{-i})_{0 \leq i \leq n}$ in $\mod \, A^\ltimes$ given in Proposition 1.3.6.
		
		\noindent Because $\sigma \circ q + q \circ \sigma = 0$, we get the following double complex :
		\begin{center}
			\begin{tikzpicture}[scale = 1.5]
				\node (1) at (-0.5,0) {$0$};
				\node (2) at (1,0) {$P^{-n}$};
				\node (3) at (3,0) {$\dots$};
				\node (4) at (5,0) {$P^{-1}$};
				\node (5) at (8,0) {$P^0$};
				\node (6) at (9.5,0) {$0$};
				\node (8) at (-0.5,2) {$0$};
				\node (9) at (1,2) {$P^{-n} \otimes_A A^\ltimes$};
				\node (10) at (3,2) {$\dots$};
				\node (11) at (5,2) {$P^{-1} \otimes_A A^\ltimes$};
				\node (12) at (8,2) {$P^0 \otimes_A A^\ltimes$};
				\node (13) at (9.5,2) {$0$};
				\node (15) at (-0.5,4) {$0$};
				\node (16) at (1,4) {$P^{-n} \otimes_A A^\ltimes$};
				\node (17) at (3,4) {$\dots$};
				\node (18) at (5,4) {$P^{-1} \otimes_A A^\ltimes$};
				\node (19) at (8,4) {$P^0 \otimes_A A^\ltimes$};
				\node (20) at (9.5,4) {$0$};
				\node (21) at (1,5) {$\vdots$};
				\node (22) at (5,5) {$\vdots$};
				\node (23) at (8,5) {$\vdots$};
				\node (24) at (1,-1) {$0$};
				\node (25) at (5,-1) {$0$};
				\node (26) at (8,-1) {$0$};
				\draw[-to] (1) to (2);
				\draw[-to] (2) to node [above,midway] {$d^{-n}$}(3);
				\draw[-to] (3) to node [above,midway] {$d^{-2}$}(4);
				\draw[-to] (4) to node [above,midway] {$d^{-1}$} (5);
				\draw[-to] (5) to (6);
				\draw[-to] (8) to (9);
				\draw[-to] (9) to node [above, midway] {$d^{-n} \otimes q$} (10);
				\draw[-to] (10) to node [above, midway] {$d^{-2} \otimes q$}(11);
				\draw[-to] (11) to node [above,midway] {$d^{-1} \otimes q$} (12);
				\draw[-to] (12) to (13);
				\draw[-to] (15) to (16);
				\draw[-to] (16) to node [above, midway] {$d^{-n} \otimes q$} (17);
				\draw[-to] (17) to node [above,midway] {$d^{-2} \otimes q$} (18);
				\draw[-to] (18) to node [above,midway] {$d^{-1} \otimes q$}(19);
				\draw[-to] (19) to (20);
				\draw[-to] (21) to (16);
				\draw[-to] (22) to (18);
				\draw[-to] (23) to (19);
				\draw[-to] (16) to node [right,midway] {$\id \otimes \sigma$} (9);
				\draw[-to] (18) to node [right,midway] {$\id \otimes \sigma$} (11);
				\draw[-to] (19) to node [right,midway] {$\id \otimes \sigma$} (12);
				\draw[-to] (9) to node [right,midway] {$\id \otimes \pi$}(2);
				\draw[-to] (11) to node [right,midway] {$\id \otimes \pi$} (4);
				\draw[-to] (12) to node [right,midway] {$\id \otimes \pi$} (5);
				\draw[-to] (2) to (24);
				\draw[-to] (4) to (25);
				\draw[-to] (5) to (26);
			\end{tikzpicture}
		\end{center}
		Because the columns are exact, its total complex is acyclic, let us denote it $C^\bullet$.
		\\
		Consider the double complex defined as the one above, except set the $(P^{-i})_{0 \leq i \leq n}$ to zero as well as the relevant maps. Consider its total complex $R^\bullet$. This is the complex that we want to show is isomorphic to $P^\bullet$ in $\Dcat_{sg}^b (A^\ltimes)$. 
		\\
		The cochain map $f^\bullet : R^\bullet \to P^\bullet$ defined by $f^{-i} = \id_{P^{-i}}\otimes \pi$ if $0 \leq i \leq n$ and $f^i = 0$ otherwise has mapping cone $C^\bullet$. Therefore it is a quasi-isomorphism because $C^\bullet$ is acyclic.
	\end{proof}
	
	\begin{corollary}
		Using the notations of the previous proposition, if $P^\bullet$ has no projective-injective summands in $\Ccat^b(\proj \, A)$ we have $\dim_k(F(P^\bullet)) = \frac{1}{2}\dim_k (\bigoplus_{i=0}^n P^i \otimes_A A^\ltimes)$.
	\end{corollary}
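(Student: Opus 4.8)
The plan is to exploit the identification $F(P^\bullet)\simeq\Coker(\varphi)$ from Proposition 1.3.8 together with two inputs: a dimension count furnished by the acyclicity already established in the proof of that proposition, and the observation that the hypothesis on $P^\bullet$ forces the module $\Coker(\varphi)$ to have no projective $A^\ltimes$-direct summand, so that it is the reduced representative of $F(P^\bullet)$ in $\CMrad(A^\ltimes)$ and its $k$-dimension is the invariant we want.

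Step 1 (dimension count). Keep the notation of Proposition 1.3.8 and set $M=\bigoplus_{i=0}^n P^{-i}\otimes_A A^\ltimes$, so that $\varphi\colon M\to M$ satisfies $\varphi^2=0$ and $F(P^\bullet)\simeq\Coker(\varphi)$ in $\barCM(A^\ltimes)$. The complex denoted $R^\bullet$ in the proof of Proposition 1.3.8 is quasi-isomorphic to the bounded complex $P^\bullet$, hence exact in every sufficiently negative degree; in those degrees it is the $1$-periodic complex $\cdots\to M\xrightarrow{\varphi}M\xrightarrow{\varphi}M\to\cdots$, so there $\Ker(\varphi)=\Im(\varphi)$. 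Therefore $\dim_k M=\dim_k\Ker(\varphi)+\dim_k\Im(\varphi)=2\dim_k\Im(\varphi)$, and
\[
\dim_k\Coker(\varphi)=\dim_k M-\dim_k\Im(\varphi)=\tfrac12\dim_k M=\tfrac12\dim_k\Bigl(\bigoplus_{i=0}^n P^{-i}\otimes_A A^\ltimes\Bigr).
\]

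Step 2 ($\Coker(\varphi)$ is reduced). By Theorem 1.3.5 one may also compute $F(P^\bullet)$ as $\mathcal{F}(\Gamma(P^\bullet))$, where $\Gamma$ is induced by the exact equivalence $\tilde{\Gamma}\colon\Ccat^b(\proj\,A)\to\Pcat^\Z(A)$ of Proposition 1.2.12, which sends $P^\bullet$ to the pair $\bigl(\bigoplus_i P^i,\sum_i d^i\bigr)$, itself a graded maximal Cohen--Macaulay $A^\ltimes$-module under $\Pcat^\Z(A)\simeq\CM^\Z(A^\ltimes)$. An exact equivalence of Frobenius categories matches projective--injective summands, so the hypothesis that $P^\bullet$ has no projective--injective summand in $\Ccat^b(\proj\,A)$ means precisely that $\tilde{\Gamma}(P^\bullet)$ has no graded-projective summand; by the graded analogue of Proposition 1.2.9, proved by the same argument, this gives $\Im\bigl(\sum_i d^i\bigr)\subseteq\textup{rad}\bigl(\bigoplus_i P^i\bigr)$. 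Since $\textup{rad}(A^\ltimes)$ is a graded ideal and $\sum_i d^i$ is a degree-$1$ map, both $\Im\bigl(\sum_i d^i\bigr)$ and $\textup{rad}\bigl(\bigoplus_i P^i\bigr)$ are graded submodules, so this inclusion is insensitive to the grading: the ungraded module $\mathcal{F}\tilde{\Gamma}(P^\bullet)$ lies in $\CMrad(A^\ltimes)$ and hence, by Proposition 1.2.9, has no projective $A^\ltimes$-summand. As $\mathcal{F}\tilde{\Gamma}(P^\bullet)$ and $\Coker(\varphi)$ are isomorphic in $\barCM(A^\ltimes)$ and have equal $k$-dimension by Step 1 (both equal $\dim_k\bigl(\bigoplus_i P^i\bigr)$), Krull--Schmidt together with Proposition 1.2.9 yields $\Coker(\varphi)\simeq\mathcal{F}\tilde{\Gamma}(P^\bullet)$ as $A^\ltimes$-modules; in particular $\dim_k F(P^\bullet)=\dim_k\Coker(\varphi)$, which combined with Step 1 is the assertion.

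The dimension count of Step 1 is routine once the acyclicity from Proposition 1.3.8 is invoked. The part requiring genuine care -- and the step I expect to be the main obstacle to phrase cleanly -- is Step 2: one must make sure that the absence of a projective--injective summand in the bounded complex $P^\bullet$ really transfers to the absence of a projective summand in the ungraded module $F(P^\bullet)$, and the mechanism that makes this work is precisely that the radical condition defining $\CMrad(A^\ltimes)$ involves only graded data, so that the graded and ungraded forms of the decomposition in Proposition 1.2.9 agree on the reduced part.
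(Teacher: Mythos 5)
Your Step 1 reproduces the paper's proof, and in fact is a bit more careful: the paper computes $\dim_k\Coker(\varphi)=\tfrac12\dim_k\bigl(\bigoplus_{i=0}^n P^{-i}\otimes_A A^\ltimes\bigr)$ from the identity $\Ker(\varphi)=\Im(\varphi)$ but leaves that identity unjustified, whereas you supply the correct reason (the $1$-periodic tail of $R^\bullet$ is exact in sufficiently negative degrees because $R^\bullet$ is quasi-isomorphic to the bounded complex $P^\bullet$). The paper then stops: it reads $F(P^\bullet)$ as literally the module $\Coker(\varphi)$ handed to it by Proposition~1.3.8, so no well-definedness argument is needed and the hypothesis on $P^\bullet$ plays no visible role in its own proof of this corollary.

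Your Step 2 is therefore extra content addressing a real concern (whether $\dim_k F(P^\bullet)$ is an invariant of the stable isomorphism class, i.e.\ whether $\Coker(\varphi)$ has no projective $A^\ltimes$-summand), but the way you establish it is circular in this paper's logical ordering. You invoke Theorem~1.3.5 to compute $F(P^\bullet)$ as $\mathcal{F}(\Gamma(P^\bullet))$ and then compare with $\Coker(\varphi)$ via Krull--Schmidt; however, the commutativity of the square in Theorem~1.3.5 is proved only after --- and by means of --- the next corollary identifying $F(P^\bullet)$ with $\bigl(\bigoplus_i P^i,\sum_i d^i\bigr)$, and that corollary in turn cites the dimension formula you are in the middle of proving. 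To keep Step 2 you would need to first prove $\Coker(\varphi)\simeq\bigl(\bigoplus_i P^i,\sum_i d^i\bigr)$ directly (which is exactly what the paper does in the following corollary, using the present dimension count) and only then read off $\Im\bigl(\sum_i d^i\bigr)\subseteq\mathrm{rad}\bigl(\bigoplus_i P^i\bigr)$ from the no-projective-injective-summand hypothesis. As it stands, Step 1 alone carries the corollary in the sense the paper intends, and the appeal to Theorem~1.3.5 in Step 2 should be removed or replaced by the direct argument.
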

	
	\begin{proof}
		By definition, we have $F(P^\bullet) = \Coker(\varphi)$. However, we have $\Im(\varphi) = \Ker(\varphi)$ thus $\dim_k(\Ker(\varphi)) = \frac{1}{2}\dim_k (\bigoplus_{i=0}^n P^i \otimes_A A^\ltimes)$. 
		\\
		The result follows from 
		\[\dim_k(\Coker(\varphi)) = \dim_k(\bigoplus_{i=0}^n P^i \otimes_A A^\ltimes) - \dim_k (\Im(\varphi))\]
	\end{proof}
	
	\begin{corollary}
		Using the notations of the previous proposition, if $(P^\bullet, d^\bullet)$ is a bounded complex of projective $A$-modules with no projective-injective summands in $\Ccat^b(\proj A)$, then:
		\[F(P^\bullet) \simeq (\bigoplus_{i \in \Z} P^i, \sum_{i \in \Z} d^i)\]
	\end{corollary}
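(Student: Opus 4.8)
The plan is to exhibit an explicit isomorphism between $\Coker(\varphi)$ — which by Proposition 1.3.8 represents $F(P^\bullet)$ in $\barCM(A^\ltimes)$ — and the $A^\ltimes$-module $M$ of the statement, and then to force that this map is an isomorphism using the dimension count of Corollary 1.3.9. Recall that Proposition 1.3.8 realises $F(P^\bullet)$ in $\barCM(A^\ltimes)$ as $\Coker(\varphi)$, where $\varphi$ is a square-zero endomorphism of $Q:=\bigoplus_{i=0}^n P^{-i}\otimes_A A^\ltimes$ built from $\sigma$ and $q$. Identifying $P^{-i}\otimes_A A^\ltimes$ with $P^{-i}\oplus P^{-i}\varepsilon$ and using $\sigma(1)=\varepsilon$, $\sigma(\varepsilon)=0$, $q(1)=1$, $q(\varepsilon)=-\varepsilon$, one reads off that $\varphi(p)=p\varepsilon+d^{-i}(p)$ for $p\in P^{-i}$ and $\varphi(p\varepsilon)=-d^{-i}(p)\varepsilon$. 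On the other side, write $M:=(\bigoplus_{i\in\Z}P^i,\sum_{i\in\Z}d^i)$ for the object appearing in the statement, regarded via the functor $\Phi$ of Proposition 1.2.10 as a maximal Cohen--Macaulay $A^\ltimes$-module: its underlying $A$-module is $\bigoplus_i P^i$, which is projective over $A$, and $\varepsilon$ acts on it as $D:=\sum_i d^i$.

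First I would write down a morphism of $A^\ltimes$-modules $g\colon Q\to M$. Since $g$ is $A^\ltimes$-linear it is determined by its restrictions $\gamma_i$ to the summands $P^{-i}\subset P^{-i}\otimes 1$, together with the forced rule $g(p\varepsilon)=D(\gamma_i(p))$ on the $\varepsilon$-parts. I take $\gamma_i$ to be $(-1)^i$ times the canonical inclusion $P^{-i}\hookrightarrow\bigoplus_j P^j$. Then $g$ is $A$-linear, and $g(m\cdot\varepsilon)=D(g(m))$ is checked directly on the two kinds of generators (using $D^2=0$ on the $\varepsilon$-parts), so $g$ is $A^\ltimes$-linear. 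The essential point is that $g\circ\varphi=0$: for $p\in P^{-i}$ one has $g(\varphi(p))=g(p\varepsilon)+g(d^{-i}(p))=(-1)^i d^{-i}(p)+(-1)^{i-1}d^{-i}(p)=0$, the opposite signs $(-1)^i$ and $(-1)^{i-1}$ being exactly what makes the two terms cancel, and $g(\varphi(p\varepsilon))=-g(d^{-i}(p)\varepsilon)=-(-1)^{i-1}d^{-i+1}(d^{-i}(p))=0$ because $d^2=0$. Hence $g$ factors through a morphism of $A^\ltimes$-modules $\bar g\colon\Coker(\varphi)\to M$.

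It remains to see that $\bar g$ is an isomorphism. It is surjective, since already $\Im(g)\supseteq\gamma_i(P^{-i})=P^{-i}$ for every $i$, so $\Im(g)=\bigoplus_i P^i=M$. By Corollary 1.3.9 — and this is the only place the hypothesis that $P^\bullet$ has no projective--injective summands in $\Ccat^b(\proj A)$ is used — one has
\[\dim_k\Coker(\varphi)=\dim_k F(P^\bullet)=\frac{1}{2}\dim_k\bigl(\textstyle\bigoplus_{i=0}^n P^i\otimes_A A^\ltimes\bigr)=\dim_k\textstyle\bigoplus_i P^i=\dim_k M,\]
the last equalities because $A^\ltimes$ is free of rank $2$ over $A$. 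A surjective linear map between finite-dimensional vector spaces of the same dimension is bijective, so $\bar g$ is an isomorphism of $A^\ltimes$-modules. Composing with the isomorphism $F(P^\bullet)\simeq\Coker(\varphi)$ of Proposition 1.3.8 yields $F(P^\bullet)\simeq M=(\bigoplus_{i\in\Z}P^i,\sum_{i\in\Z}d^i)$ in $\barCM(A^\ltimes)$, as desired.

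The difficulty is bookkeeping rather than structure: one must fix once and for all the conventions for $\sigma$, $q$ and the identification $P^{-i}\otimes_A A^\ltimes\cong P^{-i}\oplus P^{-i}\varepsilon$, and keep the signs $(-1)^i$ straight so that $g\circ\varphi=0$ genuinely holds; one must also take care that the dimension equality is invoked only under the stated hypothesis. An alternative would be to avoid the explicit map $g$, using instead the decomposition of Proposition 1.2.14 together with a dimension argument, but exhibiting the comparison map directly seems the cleanest route.
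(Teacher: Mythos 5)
Your proof is correct and follows essentially the same strategy as the paper's: both construct an explicit $A^\ltimes$-linear surjection from $\bigoplus_{i=0}^n P^{-i}\otimes_A A^\ltimes$ onto $N=(\bigoplus_i P^i,\sum_i d^i)$ that vanishes on $\Im(\varphi)$, hence a surjection $\Coker(\varphi)\to N$, and then invoke the dimension count of Corollary 1.3.9 to upgrade it to an isomorphism. The only differences are cosmetic: your map $g$ carries alternating signs $(-1)^i$ (the paper's $\delta=\id\otimes\pi+\sum d^i\otimes\pi_2$ differs from $g$ by an automorphism of $N$, and as literally written actually needs a sign fix for $\delta\circ\varphi=0$ to hold, which your signed version supplies), and you verify surjectivity directly on generators rather than by exhibiting the section $\delta^*$.
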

	
	\begin{proof}
		Let $N$ be the module given by the pair $(\bigoplus_{i \in \Z} P^i, \displaystyle\sum_{i \in \Z} d^i)$. We know that $F(P^\bullet) \simeq \Coker(\varphi)$ with :
		\[\varphi = \displaystyle\sum_{j=0}^n id_{P^j}\otimes \sigma + \displaystyle\sum_{j=0}^n d^j \otimes q\]
		\noindent Let $\pi_2 : A^\ltimes \to A$ be the morphism of $A$-bimodules defined by $\pi_2(a,x) = x$.
		
		\noindent Now consider the morphism $\delta : \bigoplus_{i=0}^n P^i \otimes_A A^\ltimes \to N$ defined by 
		\[\delta = \id \otimes \pi + \sum_{i=0}^n d^i \otimes \pi_2 \]
		This map is surjective as it has a section $\delta^* := \id \otimes \iota - \displaystyle\sum_{i=0}^n d^i \otimes (\sigma \circ \iota)$. Furthermore, as  $\delta \circ \varphi = 0$ we have a surjection $F(P^\bullet) \to N$.
		
		\noindent By construction, we have $\dim_k(F(P^\bullet)) = \frac{1}{2}\dim_k(\bigoplus_{i=0}^n P^i \otimes_A A^\ltimes) = \dim_k (N)$ which means that the surjection $F(P^\bullet) \to N$ is an isomorphism.
	\end{proof}
	
	\begin{remark}
		From this corollary, we see that the 1-periodic derived category that we consider is the same as the derived category of 1-periodic complexes as given in \cite{Sta} and the functor $F$ is the compression functor that is also considered.
	\end{remark}
	
	\begin{proof}(Theorem 1.3.5)
		From Proposition 1.2.12 and Corollary 1.3.12., we get that the diagram commutes for objects. It remains to show that it commutes on morphisms.
		Let $f^\bullet : P^\bullet \to Q^\bullet$ be a morphism in $\Kcat^b(\proj \, A)$. Then $K(f^\bullet)$ is the morphism of complexes defined by 
		\[K(f^\bullet)^{-j} = \sum_{i=0}^{j} f^{_i} \otimes \id_{A^\ltimes} : \bigoplus_{i=0}^j P^{-i} \otimes A^\ltimes \to \bigoplus_{i=0}^j Q^{-i} \otimes A^\ltimes. \]
		If $j$ is greater than $\textup{gldim}(A)$, we have 
		\[K(f^\bullet)^{-j} = \sum_{i=0}^{n} f^{_i} \otimes \id_{A^\ltimes} : \bigoplus_{i=0}^n P^{-i} \otimes A^\ltimes \to \bigoplus_{i=0}^n Q^{-i} \otimes A^\ltimes.\]
		Thus $\iota_{A^\ltimes}^{-1} \circ K(f^\bullet) = \displaystyle\sum_{i=0}^n f^{-i} = \Fcat \circ \Gamma(f^\bullet)$.
	\end{proof}
	
	\begin{remark}
		The suspension functor in $\barCM(A^\ltimes)$ is not isomorphic to the identity functor (see \cite{Kelcor}). 
		If $(P,\psi)$ is an object of $\CM^{A^\ltimes}$, the following short exact sequence computes its syzygy 
		\[0 \to (P,-\psi) \overset{\begin{pmatrix}
				\psi \\
				- \id
		\end{pmatrix}}{\longrightarrow} (P \oplus P, \begin{pmatrix}
			0 & 0 \\
			\id & 0
		\end{pmatrix}) \overset{\begin{pmatrix}
			\id & \psi
			\end{pmatrix}}{\longrightarrow} (P,\psi) \to 0\]
		Thus the suspension $\Omega$ is not always the identity functor but $\Omega^2$ is the identity. 
	\end{remark}
	
	\begin{remark}
		The category $\barCM(A^\ltimes)$ can be seen as a special case of a $Q$-shaped derived category as defined in \cite{HJ}. Using notations from this paper, it is the case where $Q$ is the cyclic quiver with one vertex (see \cite[Figure 2]{HJ}). Here, $\mod \, A^\ltimes$ plays the role of ${}_{Q,A} \Mod$ (as a consequence of Proposition 1.2.10). Furthermore, one may show that $\add(A^\ltimes)$ is the subcategory of exact objects $\mathcal{E}$. Thus $\CM(A^\ltimes)$ is analog to ${}^\perp \mathcal{E}$ and $\barCM(A^\ltimes)$ to ${}^\perp \mathcal{E} / {}_{Q,A} \textup{Proj}$.
	\end{remark}
	
	\subsection{Auslander-Reiten triangles}
	
	Because $A^\ltimes$ is a Gorenstein algebra (\cite{AR}) the category $\CM(A^\ltimes)$ has Auslander-Reiten sequences (\cite[Section 3]{AR}). Therefore, the triangulated category $\barCM(A^\ltimes)$ has Auslander-Reiten triangles.
	
	However, the Auslander-Reiten sequences in $\CM(A^\ltimes)$ are different than the ones in $\mod \, A^\ltimes$. Indeed if $M$ is maximal Cohen-Macaulay, it is not necessarily true for $\tau M$. It is possible to compute the Auslander-Reiten translate in $\CM(A^\ltimes)$ from the one in $\mod \, A^\ltimes$ as shown in \cite{AR} but this is not the approach we take here.
	
	We want to prove the following result.
	
	\begin{proposition}
		The triangulated functor $\Dcat^b(A) \to \barCM(A^\ltimes)$ preserves Auslander-Reiten triangles.
	\end{proposition}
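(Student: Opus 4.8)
The plan is to reduce the statement to a covering-theoretic property of the functor forgetting the grading. By Proposition~1.2.12 the functor $\Gamma\colon\Dcat^b(A)\to\barCM^\Z(A^\ltimes)$ is a triangulated equivalence, and by Theorem~1.3.5 the triangulated functor $\Dcat^b(A)\to\barCM(A^\ltimes)$ of the statement is the composite $\mathcal{F}\circ\Gamma$, where $\mathcal{F}$ forgets the grading. A triangulated equivalence preserves Auslander--Reiten triangles, and $\Dcat^b(A)$ does have such triangles because $A$ has finite global dimension. Hence it suffices to prove that $\mathcal{F}\colon\barCM^\Z(A^\ltimes)\to\barCM(A^\ltimes)$ preserves Auslander--Reiten triangles.

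The functor $\mathcal{F}$ is induced by the exact functor $G\colon\CM^\Z(A^\ltimes)\to\CM(A^\ltimes)$ forgetting the grading, a functor between Frobenius exact categories which carries projective--injective objects to projective--injective objects and preserves short exact sequences. I claim $G$ is a Galois covering for the $\Z$-action on $\CM^\Z(A^\ltimes)$ by the grading shift $M\mapsto M(1)$: it is the identity on objects, it identifies $\Hom_{\CM(A^\ltimes)}(GM,GN)$ with $\bigoplus_{i\in\Z}\Hom_{\CM^\Z(A^\ltimes)}(M,N(i))$, and the action is free on nonzero objects, since $M\cong M(1)$ would force $\dim_k M_i=\dim_k M_{i-1}$ for all $i$, hence $M=0$ by finite-dimensionality. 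The push-down theorem for Galois coverings (Bondartz--Gabriel, in a form valid for exact/Frobenius categories) then shows that $G$ sends almost split sequences in $\CM^\Z(A^\ltimes)$ to almost split sequences in $\CM(A^\ltimes)$. Since $G$ commutes with taking syzygies, hence with the triangulated shifts of the two stable categories, it follows that $\mathcal{F}$ carries Auslander--Reiten triangles to Auslander--Reiten triangles, which is what we want.

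The heart of the matter is precisely this push-down step: the functor $\Dcat^b(A)\to\barCM(A^\ltimes)$ is far from an equivalence and behaves like a quotient/covering functor, so Auslander--Reiten triangles cannot be transported formally. Two points deserve care: verifying that $G$ genuinely is a Galois covering (the freeness of the $\Z$-action, settled above), and having the push-down theorem available in the Frobenius generality rather than only for modules over locally bounded $k$-categories --- the latter is a routine adaptation of the classical argument. As an alternative route that stays inside the paper, one can instead check the defining axioms of an Auslander--Reiten triangle directly for the image $F(\tau X\to Y\to X\overset{w}{\to}\tau X[1])$ of an Auslander--Reiten triangle of $\Dcat^b(A)$, with $X$ indecomposable: the connecting map $Fw$ is nonzero because $F$ is faithful; $FX$ is indecomposable because over a $\Z$-graded finite-dimensional algebra a graded-indecomposable module stays indecomposable (the ``shift'' endomorphisms form a triangular ideal of the ungraded endomorphism ring, so locality is preserved); and the right almost split property follows by decomposing an arbitrary non-isomorphism $FU\to FX$ according to $\Hom_{\barCM(A^\ltimes)}(FU,FX)\cong\bigoplus_{i\in\Z}\Hom_{\Dcat^b(A)}(U,X[i])$ and applying to each homogeneous component the right almost split property of the Auslander--Reiten triangle ending at $X[i]$, which is the $i$-th shift of the one ending at $X$. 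In this second approach the factorization axiom is the delicate point.
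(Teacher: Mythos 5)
Your first reduction --- factoring $\Dcat^b(A)\to\barCM(A^\ltimes)$ through the equivalence $\Gamma$ and the forgetful functor $\Fcat$, so that it suffices to show $\Fcat$ preserves Auslander--Reiten triangles --- is exactly the paper's. From there the two arguments diverge. The paper passes to the exact level via Lemma 1.4.3 and applies its Lemma 1.4.2 (a criterion for an exact functor between Frobenius categories to preserve almost split sequences), and the decisive step is verifying the hypothesis of that lemma: it uses the gradability criterion of Amiot--Oppermann (\cite{AO14}) via the family of twists $t_\lambda$, showing that an almost split sequence in $\CM(A^\ltimes)$ with one gradable end has all three terms gradable. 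You instead invoke the Bongartz--Gabriel push-down theorem for Galois coverings. Your framing is, in one respect, more accurate than the paper's: the covering-type decomposition $\Hom(\Fcat M,\Fcat N)\cong\bigoplus_i\Hom^\Z(M,N(i))$ is the right description of what the forgetful functor does, whereas the paper asserts that $\tilde{\Fcat}$ is \emph{fully faithful}, which cannot hold as soon as some $\Hom^\Z(M,N(i))$ with $i\neq 0$ is nonzero. What the paper's $t_\lambda$ argument buys is precisely the content you relegate to a ``routine adaptation'' of the push-down theorem. This is where your main route is thin: the classical push-down result is stated and proved for module categories over locally bounded $k$-categories, and the almost split sequences in $\CM(A^\ltimes)$ are \emph{not} the restrictions of almost split sequences in $\mod A^\ltimes$ (the translate $\tau Z$ of a non-projective CM module need not be CM), so one cannot simply run the module-level theorem and restrict. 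A genuine proof along your lines would have to redo the push-down argument internally in the Frobenius categories $\CM^\Z$ and $\CM$, which is not merely a change of vocabulary --- and is what the paper's $t_\lambda$-based verification of the hypothesis of Lemma 1.4.2 achieves.

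Your alternative route contains a concrete gap. To show that $FX$ is the end term of an Auslander--Reiten triangle in $\barCM(A^\ltimes)$, the right almost split property must be checked for every non-retraction $W\to FX$ with $W$ an arbitrary object of $\barCM(A^\ltimes)$. You only treat morphisms of the form $FU\to FX$, which you decompose using the covering isomorphism $\Hom(FU,FX)\cong\bigoplus_i\Hom(U,X[i])$. But $\Fcat$ is far from dense --- the whole point of the paper is that $\barCM(A^\ltimes)$ has many indecomposables, the non-gradable bands, that are not in the image of $\Fcat$ --- so ``arbitrary $W$'' is not of the form $FU$, and for such $W$ no decomposition of $\Hom(W,FX)$ over shifts is available. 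In other words, the factorization axiom is not only the ``delicate point'' you flag: as written, it is not established at all for the objects that actually make the target category larger than the source.
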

	
	To do so, we use the following more general result, inspired by \cite{GG}.

	\begin{lemma}
		Let $\Acat$ and $\Bcat$ be two Frobenius categories that have almost split exact sequences and $F : \Acat \to \Bcat$ be a fully faithful exact functor sending projective-injective objects of $\Acat$ to projective-injective objects of $\Bcat$. Then the following are equivalent :
		\begin{itemize}
			\item $F$ sends almost split sequences to almost split sequences ;
			\item For each almost split sequence $0 \to X \to Y \to Z \to 0$, if $X$ or $Z$ is isomorphic to an object of $\Im(F)$ then this short exact sequence is isomorphic to a short exact sequence of $\Im(F)$.
		\end{itemize}
	\end{lemma}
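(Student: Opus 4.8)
\emph{The plan.} I would prove the two implications separately. Throughout I would freely use three standard facts about a Frobenius category with almost split sequences: (a) the almost split sequence ending at a given indecomposable non-projective object is unique up to isomorphism of short exact sequences, and dually for the starting term; (b) a non-split conflation $0\to L\to M\xrightarrow{q}N\to 0$ with $L$ indecomposable and $q$ right almost split is itself almost split; and (c) in a Frobenius category projective objects and injective objects coincide. I would also use that a fully faithful functor induces isomorphisms of endomorphism rings, hence preserves and reflects indecomposability, and sends non-retractions to non-retractions and non-split conflations to non-split conflations; combined with the hypotheses that $F$ is exact and preserves projective-injectives, this gives that $F(a)$ is projective-injective in $\Bcat$ if and only if $a$ is projective-injective in $\Acat$.

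\emph{The implication $(1)\Rightarrow(2)$.} This direction is essentially just uniqueness of almost split sequences. Given an almost split sequence $0\to X\to Y\to Z\to 0$ of $\Bcat$ with, say, $Z\simeq F(Z')$ (the case $X\simeq F(X')$ being dual, argued via the starting term), I would first note that $Z'$ is indecomposable and non-projective in $\Acat$, so $\Acat$ carries an almost split sequence $\eta\colon 0\to X'\to Y'\to Z'\to 0$. By hypothesis (1) its image $F(\eta)$ is an almost split sequence of $\Bcat$ ending at $F(Z')\simeq Z$, so by fact (a) the given sequence is isomorphic to $F(\eta)$, all of whose terms lie in $\Im(F)$. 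That is precisely condition (2).

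\emph{The implication $(2)\Rightarrow(1)$.} Let $\eta\colon 0\to X'\xrightarrow{u}Y'\xrightarrow{v}Z'\to 0$ be almost split in $\Acat$. Then $F(\eta)\colon 0\to F(X')\xrightarrow{F(u)}F(Y')\xrightarrow{F(v)}F(Z')\to 0$ is a non-split conflation of $\Bcat$ (non-split since $\eta$ is and $F$ is faithful) with $F(X')$ indecomposable, so by fact (b) it suffices to check that $F(v)$ is right almost split; it is not a retraction since $F(\eta)$ does not split. I would then take the almost split sequence $\theta\colon 0\to X\to Y\xrightarrow{\bar v}F(Z')\to 0$ of $\Bcat$ ending at the indecomposable non-projective object $F(Z')$, and a non-retraction $h\colon W\to F(Z')$; since $\bar v$ is right almost split, $h$ factors through $\bar v$, so it is enough to factor $\bar v$ through $F(v)$. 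This is where (2) enters: because $\theta$ ends at $F(Z')\in\Im(F)$, it is isomorphic to a conflation whose terms all lie in $\Im(F)$, and by full faithfulness this conflation can be written $0\to F(a_1)\to F(a_2)\xrightarrow{F(b)}F(Z')\to 0$ with $b\colon a_2\to Z'$ a morphism of $\Acat$, the isomorphism identifying $\bar v$ with $F(b)\circ\phi$ for some isomorphism $\phi\colon Y\xrightarrow{\sim}F(a_2)$. Since $\bar v$ is not a retraction neither is $b$, so $b$ factors through the right almost split morphism $v$, say $b=v\circ e$; applying $F$ gives $\bar v=F(v)\circ F(e)\circ\phi$, the factorization sought. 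Hence $F(v)$ is right almost split and $F(\eta)$ is almost split.

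\emph{Main obstacle.} The delicate point is the factorization step in $(2)\Rightarrow(1)$: this is the only place where hypothesis (2) is genuinely used, and the care needed is in passing — via full faithfulness and faithfulness — from the abstract statement ``$\theta$ is isomorphic to a conflation with terms in $\Im(F)$'' to an honest factorization of $\bar v$ through $F(v)$, checking at each stage that the morphisms produced lie in $\Im(F)$ and that non-retractions are preserved. Everything else (uniqueness of almost split sequences, the characterization (b), and the behaviour of fully faithful exact functors with respect to indecomposability and projectivity-injectivity) is routine.
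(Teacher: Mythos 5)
Your $(1)\Rightarrow(2)$ is the same argument as the paper's: pass to the almost split sequence in $\Acat$ at the preimage, apply $F$, and invoke uniqueness. Your $(2)\Rightarrow(1)$, however, takes a genuinely different route. The paper picks an arbitrary indecomposable non-projective-injective $X'\in\Acat$, forms the almost split sequence $\theta$ in $\Bcat$ \emph{starting} at $FX'$, uses condition (2) to write $\theta$ as $F$ of a conflation $\eta'$ in $\Acat$, and then verifies directly — by pushing non-retractions through full faithfulness — that $\eta'$ is almost split in $\Acat$, separately checking both the right and the left almost-split conditions and implicitly invoking uniqueness at the end to conclude that $F$ preserves almost split sequences. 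You instead start from the given almost split sequence $\eta$ in $\Acat$ and show $F(\eta)$ is almost split by establishing that $F(v)$ is right almost split: you factor the right almost split map $\bar v$ of the sequence $\theta$ ending at $F(Z')$ through $F(v)$, using (2) to identify $\bar v$ with $F(b)\phi$ for some non-retraction $b$ that then factors through $v$. This buys you two small efficiencies — no appeal to uniqueness, and only one of the two almost-split conditions to check — at the price of invoking the standard characterization (your fact (b)) that a non-split conflation with a right almost split epi and indecomposable kernel is itself almost split. Both proofs use (2) in exactly the same way; the only point you gloss over is that the isomorphism of conflations need not fix $F(Z')$ on the nose, but since $F$ is full the third component of the isomorphism lies in $\Im(F)$ and can be absorbed into $b$, so the step is sound. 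The proposal is correct.
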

	
	\begin{proof}
		Assume that $F$ preserves almost split sequences. Let $0 \to X \to Y \to Z \to 0$ be an almost split sequence of $\Bcat$ such that $X$ is isomorphic to an object of $\Im(F)$. 
		
		There exists $X'$ in $\Acat$ that is not projective-injective such that $X \simeq FX'$. Consider the almost split sequence $0 \to X' \to Y' \to Z' \to 0$ associated with $X'$. As $F$ preserves almost split sequences, $0 \to FX' \to FY' \to FZ' \to 0$ is almost split in $\Bcat$. 
		
		Thus by uniqueness of almost split sequences, $0 \to X \to Y \to Z \to 0$ is isomorphic to this exact sequence of $\Im(F)$.
		
		The other case where $Z$ is a non projective-injective object of $\Bcat$ that is isomorphic to an object of $\Im(F)$ is dealt with in the same way.
		
		Conversely, let us assume that the second condition is true for $F$. We now show that $F$ preserves almost split sequences.
		
		Let $X'$ be an indecomposable non projective-injective object of $\Acat$. By assumption, $X := FX'$ is not projective-injective. Consider the almost split sequence $0 \to X \to Y \to Z \to 0$ in $\Bcat$.
		
		Because $X \in \Im(F)$, by hypothesis this almost split sequence is in $\Im(F)$, that is it can be written :
		\[0 \to FX' \overset{F(f)}{\longrightarrow} FY' \overset{F(g)}{\longrightarrow} FZ' \to 0\]
		
		We can then consider the sequence $0 \to X' \overset{f}{\to} Y' \overset{g}{\to} Z' \to 0$. It remains to show that it is an almost split sequence. 
		
		As $F$ is exact and fully faithful, this sequence is indeed exact. 
		
		Now, consider $v$ a non split epimorphism in $\Hom_\Acat (W,Z')$. As $F$ is exact and fully faithful, $F(v)$ is a non split epimorphism in $\Hom_\Bcat(FW,Z)$. Thus by definition of almost split sequences, there exists $s$ in $\Hom_\Bcat(FW,Y)$ such that $F(v) = F(g)s$.
		
		However, because $F$ is fully faithful, there exists $s'$ in $\Hom_\Acat(W,Y')$ such that $s = F(s')$ and thus $v = gs'$. This shows that $g$ is right almost split.
		
		We use analogous arguments to show that $f$ is left almost split.
	\end{proof}
	
	Before we prove Proposition 1.4.1., we recall the following classical result.
	
	\begin{lemma}(\cite{Rog})
		Let $\Ccat$ be a Frobenius category and $\underline{\Ccat}$ be its stable category. A short exact sequence of $\Ccat$
		\[0 \to X\to Y \to Z \to 0\]
		is almost split if and only if the associated triangle in $\underline{\Ccat}$
		\[X \to Y \to Z \to \Omega^{-1} X\]
		is almost split.
	\end{lemma}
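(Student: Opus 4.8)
The plan is to spell out the two notions of ``almost split'' involved and match them condition by condition; the heart of the matter is the comparison between right almost split morphisms in the Frobenius category $\Ccat$ and in its stable category $\underline\Ccat$. I will take the following definitions: a short exact sequence $\epsilon\colon 0\to X\xrightarrow{f}Y\xrightarrow{g}Z\to 0$ in $\Ccat$ is almost split iff $g$ is right almost split (equivalently, $\epsilon$ is non-split, $Z$ is indecomposable non-projective-injective, and $f$ is left almost split); a triangle $X\xrightarrow{u}Y\xrightarrow{v}Z\xrightarrow{w}\Omega^{-1}X$ in $\underline\Ccat$ is almost split iff $w\neq 0$, $Z$ is indecomposable, and $v$ is right almost split in $\underline\Ccat$. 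Write $\underline{(-)}$ for passage to $\underline\Ccat$; recall that $\Omega^{-1}X=\Coker(X\hookrightarrow I)$ for $I$ projective-injective, and that $\epsilon$ induces the triangle $X\xrightarrow{\underline f}Y\xrightarrow{\underline g}Z\xrightarrow{\underline h}\Omega^{-1}X$ with $h$ the connecting morphism.

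First I would record two elementary facts about an arbitrary short exact sequence $\epsilon$ as above. \emph{(i)} $\epsilon$ splits $\iff$ $\underline g$ is a retraction in $\underline\Ccat$ $\iff$ $\underline h=0$: the last equivalence is a standard property of triangles, and for the first, if $gs-\mathrm{id}_Z$ factors through a projective-injective $P$, say $gs-\mathrm{id}_Z=qp$, then since $P$ is projective and $g$ is a deflation we lift $q$ to $\tilde q$ with $g\tilde q=q$, so $g(s-\tilde q p)=\mathrm{id}_Z$. In particular a non-split $\epsilon$ has $\underline h\neq 0$. \emph{(ii)} If $Z$ is indecomposable and not projective-injective, a morphism $\phi\colon W\to Z$ is a retraction in $\Ccat$ $\iff$ $\underline\phi$ is a retraction in $\underline\Ccat$: one direction is trivial, and conversely, since $\mathrm{id}_Z$ does not factor through a projective-injective the ideal of endomorphisms of $Z$ that do factor through projective-injectives is contained in $\textup{rad}\,\Hom_\Ccat(Z,Z)$, so if $\phi s-\mathrm{id}_Z$ factors through a projective-injective then $\phi s$ is invertible and $\phi$ is a retraction. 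The same observation shows such a $Z$ is still indecomposable in $\underline\Ccat$.

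For the direction ``$\epsilon$ almost split $\Rightarrow$ triangle almost split'': by \emph{(i)}, $\underline h\neq 0$; $Z$ is indecomposable in $\underline\Ccat$ by \emph{(ii)}; and $\underline g=v$ is right almost split in $\underline\Ccat$ — it is not a retraction by \emph{(i)}, and given a non-retraction $\psi\colon W\to Z$ in $\underline\Ccat$, any lift $\phi\colon W\to Z$ in $\Ccat$ is a non-retraction by \emph{(ii)}, so $\phi=gs$, hence $\psi=\underline\phi=\underline g\,\underline s$. For the converse, I would first realize the given triangle by a short exact sequence: take $X$ with no projective-injective summand, choose the conflation $0\to X\to I\to\Omega^{-1}X\to 0$ with $I$ projective-injective, and pull it back along $w\colon Z\to\Omega^{-1}X$ to obtain $\epsilon\colon 0\to X\to Y'\to Z\to 0$; its associated triangle has connecting morphism $w$, hence is isomorphic to the given one, so $Y'\cong Y$ in $\underline\Ccat$ and $\underline g=v$. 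Since $\underline h=w\neq 0$, $\epsilon$ is non-split by \emph{(i)}, so $Z$ is non-projective-injective, and we may take it indecomposable non-projective-injective in $\Ccat$. Now $g$ is right almost split in $\Ccat$: not a retraction by \emph{(i)}, and for a non-retraction $\phi\colon W\to Z$ in $\Ccat$, fact \emph{(ii)} makes $\underline\phi$ a non-retraction in $\underline\Ccat$, so $\underline\phi=\underline g\,\underline s$, i.e. $\phi-gs=qp$ factors through a projective-injective $P$; lifting $q$ through the deflation $g$ to $\tilde q$ gives $\phi=g(s+\tilde q p)$. Hence $\epsilon$ is almost split.

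The step I expect to need the most care is fact \emph{(ii)}: it is precisely where the indecomposability and non-projective-injectivity of $Z$ are used, and it is what rules out a morphism that ``becomes split'' in $\underline\Ccat$ without being split in $\Ccat$ — without it the equivalence of the two right-almost-split conditions would fail. The other slightly delicate point, in the converse direction, is the standard but easily mishandled fact that every triangle of $\underline\Ccat$ is (isomorphic to) one coming from a short exact sequence, together with the bookkeeping of projective-injective direct summands; both become routine once \emph{(i)} and \emph{(ii)} are in place. If one uses instead the symmetric definition of an almost split sequence (``$f$ left almost split and $g$ right almost split''), applying the dual of \emph{(ii)} to $f$ gives the statement without change.
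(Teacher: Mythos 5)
The paper supplies no proof of this lemma; it only cites \cite{Rog}, so there is no in-paper argument to compare against. Your preliminary facts \emph{(i)} and \emph{(ii)} are correct, you have correctly identified \emph{(ii)} as the point where indecomposability and non-projectivity of $Z$ do the work, and the forward implication is clean.

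The converse direction has a genuine gap. First, you replace the given conflation by a pullback conflation with the same associated triangle, and then show that the pullback is almost split; but the lemma asserts that the \emph{given} conflation $\epsilon$ is almost split, and two conflations with isomorphic associated triangles can differ by split-exact summands, so this does not immediately transfer back. It is cleaner to argue directly on the given $\epsilon$, which is possible since \emph{(i)} and \emph{(ii)} apply to it verbatim. Second, and more seriously, the step ``we may take $Z$ indecomposable non-projective-injective in $\Ccat$'' is unjustified: indecomposability of $Z$ in $\underline{\Ccat}$ only gives indecomposability in $\Ccat$ up to a projective-injective summand. If $Z \cong Z' \oplus P$ with $P \neq 0$ projective-injective, the deflation $Y \to Z \to P$ splits, so $\epsilon$ has a trivial direct summand $0 \to 0 \to P = P \to 0$ and is therefore not almost split, yet its associated triangle is unchanged; so the ``only if'' direction of the lemma fails for such $\epsilon$. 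In other words, the lemma as stated implicitly assumes that $Z$ (and dually $X$, which you do impose but thereby change the sequence) carries no projective-injective summand. You gesture at the issue (``bookkeeping of projective-injective direct summands'') but do not close it. Stating this hypothesis explicitly and then running the argument directly on $\epsilon$, using \emph{(i)} to get non-splitness, \emph{(ii)} to transfer indecomposability and right-almost-splitness, and the dual of \emph{(ii)} for $f$, would complete the proof.
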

	
	We now prove Proposition 1.4.1..
	
	\begin{proof}(Proposition 1.4.1.)
		As $\Gamma$ (defined in Theorem 1.3.7) is an equivalence of triangulated categories, it preserves Auslander-Reiten triangles. It is thus sufficient to prove that $\Fcat : \barCM^\Z(A^\ltimes) \to \barCM(A^\ltimes)$ preserves Auslander-Reiten triangles.
		
		To show that it preserves Auslander-Reiten triangles, it is sufficient by Lemma 1.4.3. to show that the functor $\tilde{\Fcat} : \CM^\Z(A^\ltimes) \to \CM(A^\ltimes)$ preserves almost split sequences. This functor is the one induced by the functor $\mod^\Z \, A^\ltimes \to \mod \, A^\ltimes$ that forgets grading.
		
		To do so, we use Lemma 1.4.2. as $\CM^\Z(A^\ltimes)$ and $\CM(A^\ltimes)$ both have almost split sequences (\cite{AR}). 
		
		The functor $\tilde{\Fcat}$ is a fully faithful exact functor because it is the case for the functor $\mod^\Z \, A^\ltimes \to \mod \, A^\ltimes$ from which it is induced. 
		
		Secondly, the projective-injective obejcts of $\CM^\Z(A^\ltimes)$ and $\CM(A^\ltimes)$ are the projective $A^\ltimes$-modules and thus are preserved by $\tilde{\Fcat}$.
		
		It remains to show that if $0 \to X \to Y \to Z \to 0$ is an almost split sequence of $\CM(A^\ltimes)$ such that $X$ or $Z$ is gradable then the three objects are gradable. 
		
		Let $\lambda$ be in $k^\times$, consider the following isomorphism of algebras :
		\begin{center}
			\begin{align*}
				t_\lambda : A^\ltimes &\to A^\ltimes \\
				(a,x) &\mapsto (a,\lambda x)
			\end{align*}
		\end{center}
		This isomorphism induces an autoequivalence of $\mod \, A^\ltimes$ that we will also write $t_\lambda$. If $X$ is an $A^\ltimes$-module, we define :
		\[t(X) := \{t_\lambda(X) | \lambda \in k^\times\}/ \simeq \]
		i.e. $t(X)$ is the set of isomorphism classes of $t_\lambda(X)$ for all $\lambda$ in $k^\times$.
		
		We know from \cite{AO14} (Theorem 4.2.) that $X$ is gradable if and only if $t(X)$ is finite.
		
		Let $0 \to X \to Y \to Z \to 0$ be an almost split sequence of $\CM(A^\ltimes)$ such that $X$ or $Z$ is in the image of $\tilde{\Fcat}$. Now, consider the set $T$ of isomorphism classes of short exact sequences $0 \to t_\lambda(X) \to t_\lambda(Y) \to t_\lambda(Z) \to 0$ for $\lambda$ in $k^\times$. Because all of the $t_\lambda$ are autoequivalences, they preserve almost split sequences and thus by uniqueness of short exact sequences up to isomorphism, if $t(X)$ or $t(Z)$ is finite then $T$ is finite. 
		
		Therefore, $t(X)$, $t(Y)$ and $t(Z)$ are all three finite i.e. $X$, $Y$ and $Z$ are gradable. Because $\tilde{\Fcat}$ is fully faithful, the short exact sequence $0 \to X \to Y \to Z \to 0$ is in the image of $\tilde{\Fcat}$.
		
		Using Lemma 1.4.2, we deduce that $\tilde{\Fcat}$ preserves almost split sequences. Therefore $\Fcat : \barCM^\Z(A^\ltimes) \to \barCM(A^\ltimes)$ preserves Auslander-Reiten triangles.
		
	\end{proof}
	
	\section{Explicit description for gentle algebras}
	
	From now on and until the end of the paper, $A$ is a gentle algebra of finite global dimension
	
	\subsection{Surface associated to a gentle algebra}
	
	In this section, we will recall the geometric model for gentle algebras in the case of algebras of finite global dimension. We first recall the definition of a gentle algebra.
	
	\begin{definition}
		A finite dimensional $k$-algebra $A$ is gentle if $A$ is isomorphic to $kQ/I$ where $Q$ is a finite connected quiver and $I$ is an ideal of the path algebra $kQ$ such that the pair $(Q,I)$ verify the following assumptions :
		\begin{itemize}
			\item for every vertex $i \in Q_0$, there is at most two arrows of target $i$ and at most two arrows of source $i$ ;
			\item the ideal $I$ is generated by paths of length 2 ;
			\item for every arrow $\alpha : i \to j$ there is at most one arrow $\beta$ of source $j$ such that $\alpha\beta \notin I$ and at most one arrow $\beta'$ of source $j$ such that $\alpha\beta' \in I$ ;
			\item for every arrow $\alpha : i \to j$ there is at most one arrow $\gamma$ of target $i$ such that $\gamma\alpha \notin I$ and at most one arrow $\gamma'$ of target $i$ such that $\gamma'\alpha \in I$.
		\end{itemize}
	\end{definition}
	
	\begin{remark}
		One of the immediate consequences of this definition is that in the quiver of a gentle algebra, every arrow is in a unique maximal path.
	\end{remark}
	
	\begin{definition}
		Let $A = kQ/I$ be a gentle algebra. We define :
		\begin{itemize}
			\item $\Mcat$ the set consisting of maximal paths of $(Q,I)$ and the trivial paths $e_i$ for the $i \in Q_0$ that are in only one maximal path.
			\item for $w \in \Mcat$ a path of length $n$, we define the $(n+2)$-gon $P_w$ as follows : if $v_0,\dots ,v_n$ are the vertices that lie on $w$ in traversal order, label the edges of $P_w$ in a counterclockwise fashion with the $v_i$. Then put a $\green$ vertex on the remaining edge. 
		\end{itemize}
	\end{definition}
	
	\begin{center}
		\begin{tikzpicture}
			\filldraw[red] (-2,0) circle (3pt) node (1) {$ $};
			\filldraw[red] (2,0) circle (3pt) node (2) {$ $};
			\filldraw[red] (3,2) circle (3pt) node (3) {$ $};
			\filldraw[red] (2,4) circle (3pt) node (4) {$ $};
			\filldraw[red] (-2,4) circle (3pt) node (5) {$ $};
			\filldraw[red] (-3,2) circle (3pt) node (6) {$ $};
			\filldraw[green] (0,0) circle (3pt) node (7) {$ $};
			\node[green] (8) at (0,0.3) {$0$};
			
			\draw (1) to (7);
			\draw (7) to (2);
			\draw[red] (2) to node[right, midway] {$v_0$} (3);
			\draw[red] (3) to node[right, midway] {$v_1$} (4);
			\draw[red,dotted] (4) to (5);
			\draw[red] (5) to node[left, midway] {$v_{n-1}$} (6);
			\draw[red] (6) to node[left, midway] {$v_n$} (1);
		\end{tikzpicture}
	\end{center}
	
	Each vertex of $Q$ appears on exactly two edges of the polygons $(P_w)_{w \in \Mcat}$. Thus, we can define the oriented surface with boundary $S$ by gluing the polygons along the edges with regards to the labels.
	
	The $\green$ vertices on $S$ are called marked points and lie on the boundary of $S$. The set of marked points is denoted by $M$. The edges of the polygons form a dissection of the surface denoted $\Delta$.
	
	\begin{remark}
		Here, $A$ is not necessarily of finite global dimension. However, it is the case if and only if each $\red$ vertex lies on the boundary of $S$.
	\end{remark}
	
	Using this construction, we have defined a triplet $(S,M,\Delta)$ for every gentle algebra. 
	
	\begin{remark}
		An arc $\gamma$ in a polygon $P_w$ joining two red arcs of the dissection always corresponds to a subpath of $w$ and thus to a morphism of $A$-modules between the corresponding projectives.
	\end{remark}
	
	\begin{example}
		Consider the following algebra $A$ which we will be using as an example throughout this paper :
		
		\begin{center}
			\begin{tikzpicture}
				\node (1) at (0,0) {$1$};
				\node (2) at (2,0){$2$};
				\node (3) at (4,0) {$3$};
				
				\draw[->] (1.20) to node[above, midway] {$a$} (2.160);
				\draw[->] (1.340) to node[below, midway] {$a'$} (2.200);
				\draw[->] (2.20) to node[above, midway] {$b$} (3.160); 
				\draw[->] (2.340) to node[below, midway] {$b'$} (3.200);
				\draw[dotted, bend left=90] (1.5,0.1) to (2.5,0.1);
				\draw[dotted, bend right=90] (1.5,-0.1) to (2.5,-0.1);
			\end{tikzpicture}
		\end{center}
		
		There are two maximal paths $ab'$ and $a'b$. The surface is thus obtained by gluing together two squares. Furthermore, we can see it is a torus with one boundary component. The marked dissected surface is as follows :
		
		\begin{center}
			\begin{tikzpicture} 
				\begin{scope}[on background layer]
					
					\draw (4,4) circle (1);
				\end{scope}
				
				\begin{scope}[dashed, every node/.style={sloped,allow upside down}]
					\draw[blue] (0,0)-- node {\midarrow} (8,0);
					\draw[black] (0,0)-- node {\midarrow} (0,8);
					\draw[blue] (0,8)-- node {\midarrow} (8,8);
					\draw[black] (8,0)-- node {\midarrow} (8,8);
				\end{scope}
				
				\begin{scope}[on above layer]
					\filldraw[red] (4,3) circle (3pt) node (1) {$ $};
					\filldraw[red] (4,5) circle (3pt) node (2) {$ $};
					\filldraw[green] (3,4) circle (3pt) node (3) {$ $};
					\filldraw[green] (5,4) circle (3pt) node (4) {$ $};

					\draw[red] (4,5) to node[right, midway] {$1$} (4,8);
					\draw[red] (4,3) to node[right, midway] {$1$} (4,0);
					
					\draw[red] (4,5) to node[below, midway] {$3$} (8,8);
					\draw[red] (4,3) to node[below, midway] {$3$} (0,0);
					
					\draw[red] (4,5) to node[right, midway] {$2$} (5.71,8);
					\draw[red] (5.71,0) to node[right, midway] {$2$} (8,4);
					\draw[red] (0,4) to node[right, midway] {$2$} (2.29,8);
					\draw[red] (2.29,0) to node[right, midway] {$2$} (4,3);
				\end{scope}
			\end{tikzpicture}
		\end{center}
	\end{example}

	\subsection{Strings on the surface of a gentle algebra}
	
	\begin{definition}
		Let $(S,M,\Delta)$ be a marked dissected surface. A string on $(S,M,\Delta)$ is a homotopy class of curves joining two fixed marked points. We ask this curve to be non-homotopic to a trivial curve starting and ending at the same marked point. We denote $St(S,M)$ the set of all strings.
	\end{definition}
	
	\begin{remark}
		When considering a string $\gamma$, we will always consider a representative that is said to be in minimal position, meaning that it intersects the arcs of $\Delta$ in a minimal number of points and that these intersections are transverse. 
	\end{remark}
	
	\begin{remark}
		By definition, a string is not oriented. However, we may choose an orientation of a string. If $\overset{\to}{\gamma}$ is an oriented string, we denote $-\overset{\to}{\gamma}$ the same string with the reverse orientation.
	\end{remark}
	
	\begin{definition}
		Let $\overset{\to}{\gamma}$ be an oriented string of $(S,M,\Delta)$. Let $p_0, \dots ,p_n$ be the ordered intersection points between $\overset{\to}{\gamma}$ and $\Delta$. Consider the decomposition $\overset{\to}{\gamma} = \gamma_0 \gamma_1 \dots \gamma_n \gamma_{n+1}$ where :
		\begin{itemize}
			\item $\gamma_0$ is an arc between the starting point of $\overset{\to}{\gamma}$ and $p_0$ ;
			\item for $1 \leq i \leq n$, $\gamma_i$ is an arc between $p_{i-1}$ and $p_i$ ;
			\item $\gamma_{n+1}$ is an arc between $p_n$ and the end point of $\overset{\to}{\gamma}$ 
		\end{itemize}
		Because every arc of $\Delta$ is tagged with a vertex of the quiver of the gentle algebra, we can consider for $0 \leq i \leq n$ the vertex $v_i$ corresponding to $p_i$. 
		
		We can now associate to $\overset{\to}{\gamma}$ an object $(P,\varphi)$ of \underbar{$\Pcat$}$(A)$ and denote $M_{\overset{\to}{\gamma}}$ its image in $\barCM(A^\ltimes)$. This object is defined as follows :
		\begin{itemize}
			\item $P = \bigoplus_{i=0}^n P_{v_i}$ ;
			\item $\varphi = \sum_{i=0}^n f_i$ where $f_i : P_{v_{i-1}} \to P_{v_i}$ or $f_i : P_{v_i} \to P_{v_{i-1}}$ is the morphism defined by the arc $\gamma_i$ 
		\end{itemize}
	\end{definition}
	
	\begin{remark}
		We can see that by construction, if $\overset{\to}{\gamma}$ is an oriented string then $M_{\overset{\to}{\gamma}} = M_{-\overset{\to}{\gamma}}$. Thus we can define this object for a non oriented string of $\gamma$ in $St(S,M)$ and we will write it $M_\gamma$.
	\end{remark}
	
	We have a similar construction of a string object $P_{(\gamma,\mu)}^\bullet$ in $\Dcat^b(A)$ given in \cite{OPS}, where $\mu$ is a grading of $\gamma$. We first recall the definition of a grading on a string.
	
	\begin{definition}
		A grading on a string $\gamma$ (in minimal position) is a map $\mu : \gamma \cap \Delta \to \Z$ such that if $p$ and $q$ are two consecutive points of $\gamma \cap \Delta$ such that if $\tilde{\gamma}$ is an orientation of the arc of $\gamma$ joining $p$ to $q$ in a polygon defining the surface, we have :
		\[f(q) = \begin{cases}
			\mu(p) + 1 \; \text{if $\tilde{\gamma}$ has the boundary on its left} \\
			\mu(p) - 1 \; \text{if $\tilde{\gamma}$ has the boundary on its right} \\
		\end{cases}\]
	\end{definition}
	
	\begin{remark}
		Any two gradings on a string always differ by an integer as the grading is entirely defined by its value on one intersection point.
	\end{remark}
	
	\begin{definition}
		Let $\gamma$ be a string of $(S,M,\Delta)$ and $\mu$ a grading on $\gamma$. Consider $\overset{\to}{\gamma}$ an orientation of $\gamma$. Using the same notations as in definition 2.2.4., we define the complex $P^\bullet_{(\gamma,\mu)}$ as follows :
		\begin{itemize}
			\item $\forall n \in \Z, \, P^n_{(\gamma,\mu)} = \displaystyle\bigoplus_{\mu(p_i) = n} P_{v_i}$ ; 
			\item the differential $d^\bullet$ is given by the maps $f_i$ defined as in Definition 2.2.4..
		\end{itemize}
		This construction does not depend on the choice of orientation, thus we will not indicate one.
	\end{definition}
	
	\begin{proposition}
		Let $\gamma$ be a string of $(S,M,\Delta)$ and $\mu$ a grading of $\gamma$. Then $M_\gamma$ is isomorphic to $F(P^\bullet_{(\gamma,\mu)})$ where $F : \Dcat^b(A) \to \barCM(A^\ltimes)$ is the functor defined in Theorem 1.3.5.
	\end{proposition}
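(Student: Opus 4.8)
The plan is to read the statement off from Corollary~1.3.12, which already computes the functor $F$ on any bounded complex of projective $A$-modules having no projective-injective direct summand in $\Ccat^b(\proj\,A)$. So it suffices to check two things: that the pair $\bigl(\bigoplus_m P^m,\sum_m d^m\bigr)$ attached by that corollary to $P^\bullet_{(\gamma,\mu)}$ is, as an object of $\underline{\Pcat}(A)\simeq\barCM(A^\ltimes)$, the very object $M_\gamma$ of Definition~2.2.4; and that $P^\bullet_{(\gamma,\mu)}$ is minimal, so that the corollary applies.

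For the first point I would simply unwind Definitions~2.2.4 and~2.2.8. Forgetting the cohomological grading, the underlying $A$-module of $P^\bullet_{(\gamma,\mu)}$ is $\bigoplus_{m\in\Z}P^m_{(\gamma,\mu)}=\bigoplus_{m\in\Z}\bigoplus_{\mu(p_i)=m}P_{v_i}=\bigoplus_{i=0}^{n}P_{v_i}$, and, since the grading axiom of Definition~2.2.6 forces $\mu(p_i)$ and $\mu(p_{i-1})$ to differ by exactly $1$, each morphism $f_i$ (which joins $P_{v_{i-1}}$ and $P_{v_i}$) is a component of exactly one differential $d^m$. Hence the total map $\sum_{m\in\Z}d^m$, viewed as an endomorphism of $\bigoplus_{i=0}^{n}P_{v_i}$, is exactly $\sum_{i}f_i=\varphi$; in other words $\bigl(\bigoplus_m P^m,\sum_m d^m\bigr)$ is the pair used to define $M_\gamma$, and their images in $\barCM(A^\ltimes)$ coincide.

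For the second point I would show that the differential of $P^\bullet_{(\gamma,\mu)}$ is radical, which rules out any direct summand of the form $\cdots\to 0\to X\overset{\id}{\longrightarrow}X\to 0\to\cdots$. By Remark~2.1.5 each $f_i$ is the map of projective $A$-modules realizing the subpath of a maximal path subtended by the arc $\gamma_i$; since $\gamma$ is in minimal position (Remark~2.2.2) this arc joins two \emph{distinct} red arcs of the polygon it traverses, so the subpath has positive length and $f_i$ is a radical morphism between indecomposable projective $A$-modules. A bounded complex of projectives all of whose differential components are radical has no projective-injective summand in $\Ccat^b(\proj\,A)$, so Corollary~1.3.12 applies and yields $F\bigl(P^\bullet_{(\gamma,\mu)}\bigr)\simeq\bigl(\bigoplus_m P^m,\sum_m d^m\bigr)=M_\gamma$ in $\barCM(A^\ltimes)$; the right-hand side visibly does not depend on $\mu$, consistently with Remark~2.2.5. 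I do not anticipate a genuine obstacle: the argument is an unravelling of definitions followed by an invocation of Corollary~1.3.12, the only mildly delicate steps being the indexing bookkeeping of the middle paragraph and the (geometrically obvious but worth recording) fact that minimal position forces every $\gamma_i$ to subtend a path of positive length, which is exactly what makes $P^\bullet_{(\gamma,\mu)}$ minimal. One could even bypass the minimality check, since a contractible summand of $P^\bullet_{(\gamma,\mu)}$ would contribute a projective $A^\ltimes$-module, hence vanish in $\barCM(A^\ltimes)$, but routing the proof through Corollary~1.3.12 as above is cleaner.
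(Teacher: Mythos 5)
Your proof is correct and follows exactly the same route as the paper's, which simply invokes Corollary~1.3.12 together with the definitions of $P^\bullet_{(\gamma,\mu)}$ and $M_\gamma$ in a single sentence. Your version is more careful: it makes explicit the bookkeeping that collapses the cohomological grading and, importantly, spells out why minimal position forces the differentials to be radical (so that the hypothesis of Corollary~1.3.12 on the absence of contractible summands is actually satisfied), a point the paper leaves implicit.
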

	
	\begin{proof}
		Using Corollary 1.3.12.,the definition of $P^\bullet_{(\gamma,\mu)}$ and of $M_\gamma$, we see that these objects are indeed isomorphic.
	\end{proof}
	
	\subsection{Bands on the surface of a gentle algebra}
	
	\begin{definition}
		Let $(S,M)$ be a marked surface, a band on $(S,M)$ is a non trivial homotopy class of a primitive closed curve. We denote $Ba(S,M)$ the set of all bands on $(S,M)$.
	\end{definition}
	
	\begin{remark}
		As with strings, we consider oriented bands. Furthermore, if $\overset{\to}{\gamma}$ is an oriented band and $p_0$ is an intersection point of $\gamma$ with $\Delta$, then there is a unique decomposition $\overset{\to}{\gamma} = \gamma_1 \dots \gamma_r$ where the sarting point of $\gamma_1$ is $p_0$ and each $\gamma_i$ is an arc of $\overset{\to}{\gamma}$ joining two consecutive intersection points of $\gamma$ with $\Delta$.
		
		For $1 \leq i \leq r$, we denote $p_i$ the end point of $\gamma_i$ and thus $p_0 = p_n$. We will also consider $v_i$ the vertex on the quiver of $A$ corresponding to the arc of $\Delta$ on which  $p_i$ sits.
	\end{remark}
	
	\begin{remark}
		Here, the $k[x,x^{-1}]$-module $(k^n,J)$ is the one with underlying vector space $k^n$ and where the action of $x$ is given by the automorphism $J$. 
	\end{remark}
	
	\begin{definition}
		Let $(k^n,J)$ be an indecomposable $k[x,x^{-1}]$-module. Using the previous notations, we define the object $M_{(\overset{\to}{\gamma},p_0,J)}$ in $\barCM(A^\ltimes)$ as follows :
		\begin{itemize}
			\item $P = \bigoplus_{i = 0}^{r-1} P_{v_i}^{\oplus n}$
			\item $\varphi = \sum_{i=1}^{r-1} f_i I_n + f_r J$ where  $f_i : P_{v_i} \to P_{v_{i+1}}$ or $f_i : P_{v_{i+1}} \to P_{v_i}$ is the morphism given by $\gamma_i$.
			\item $M_{(\overset{\to}{\gamma},p_0,J)}$ is the image in $\barCM(A^\ltimes)$ of $(P,\varphi)$ by the equivalence of Corollary 1.2.10.
		\end{itemize}
	\end{definition}
	
	\begin{remark}
		We also define a grading on bands in the same way as we defined a grading on a string. However, not all bands have a grading. As with strings, two gradings on a band always differ by an integer.
	\end{remark}
	
	\begin{definition}
		Using the previous notations, let $\overset{\to}{\gamma}$ be an oriented band, $p_0$ a starting point, $\mu$ a grading of $\gamma$ and $(k^n, J)$ an indecomposable $k[x,x^{-1}]$-module. We define the band complex $P^\bullet_{(\overset{\to}{\gamma},p_0,\mu,J)}$ as follows :
		\begin{itemize}
			\item $\forall j \in \Z, \; P^j_{(\overset{\to}{\gamma},p_0,\mu,J)} = \displaystyle\bigoplus_{\mu(p_i) = j} P^{\oplus n}_{v_i}$
			\item the differential $d^\bullet$ is given by the maps $f_i I_n$ for $1 \leq i \leq r-1$ and the map $f_r J$ where the $(f_i)_{1 \leq i \leq r}$ are the maps defined by the $(\gamma_i)_{1 \leq i \leq r}$.
		\end{itemize}
	\end{definition}
	
	\begin{proposition}
		Let $\overset{\to}{\gamma}, p_0, \mu, J$ be respectively :
		\begin{itemize}
			\item an oriented band of $(S,M,\Delta)$, 
			\item a starting point of $\gamma$,
			\item a grading of $\gamma$,
			\item an endomorphism of $k^n$ defining an indecomposable $k[x,x^{-1}]$-module $(k^n,J)$.
		\end{itemize}
		Then we have an isomorphism
		\[M_{(\overset{\to}{\gamma},p_0,J)} \simeq F(P^\bullet_{(\overset{\to}{\gamma},p_0,\mu,J)})\]
		where $F : \Dcat^b(A) \to \barCM(A^\ltimes)$ is the functor defined in THeorem 1.3.5.
	\end{proposition}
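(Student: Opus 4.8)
The plan is to argue exactly as in the analogous statement for strings: once the two definitions are unwound, the proposition is a single application of Corollary 1.3.12, applied to the bounded complex $P^\bullet_{(\overset{\to}{\gamma},p_0,\mu,J)}$. First I would observe that choosing a grading $\mu$ only spreads the $r$ summands $P_{v_0}^{\oplus n},\dots,P_{v_{r-1}}^{\oplus n}$ among the cohomological degrees, so that the total projective module underlying the band complex is $\bigoplus_{j\in\Z}P^j_{(\overset{\to}{\gamma},p_0,\mu,J)}=\bigoplus_{i=0}^{r-1}P_{v_i}^{\oplus n}=P$ and its total differential is $\sum_{j\in\Z}d^j=\sum_{i=1}^{r-1}f_iI_n+f_rJ=\varphi$, where $P$ and $\varphi$ are exactly the data defining $M_{(\overset{\to}{\gamma},p_0,J)}$. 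In other words, the pair $\bigl(\bigoplus_{j}P^j_{(\overset{\to}{\gamma},p_0,\mu,J)},\ \sum_{j}d^j\bigr)$ is literally the object of $\Pcat(A)$ whose image in $\barCM(A^\ltimes)$ is $M_{(\overset{\to}{\gamma},p_0,J)}$ (the identity $d^\bullet\circ d^\bullet=0$ being the same statement as $\varphi^2=0$).

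It then remains to see that $F$ carries the band complex to that object, and I would deduce this from Corollary 1.3.12. The only thing to verify is the hypothesis of that corollary, namely that $P^\bullet_{(\overset{\to}{\gamma},p_0,\mu,J)}$ has no projective-injective direct summand in $\Ccat^b(\proj A)$, and this is the step where the minimal-position assumption on $\gamma$ enters. Each arc $\gamma_i$ joins two arcs of $\Delta$ inside a polygon, hence corresponds by the geometric model to a nontrivial path of $(Q,I)$; thus each $f_i$ is a radical morphism between indecomposable projective $A$-modules, and since $J$ is invertible $f_rJ$ is radical too. Therefore $d^\bullet$ is a radical morphism of the Krull--Schmidt category $\Ccat^b(\proj A)$, and such a complex admits no direct summand isomorphic to $\cdots\to0\to Q\xrightarrow{\id}Q\to0\to\cdots$ with $Q$ a nonzero projective --- such a summand would force $d^\bullet$ to restrict to an isomorphism between nonzero projectives. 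Corollary 1.3.12 then yields $F\bigl(P^\bullet_{(\overset{\to}{\gamma},p_0,\mu,J)}\bigr)\simeq\bigl(\bigoplus_{j}P^j_{(\overset{\to}{\gamma},p_0,\mu,J)},\ \sum_{j}d^j\bigr)=(P,\varphi)\simeq M_{(\overset{\to}{\gamma},p_0,J)}$ in $\barCM(A^\ltimes)$.

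I do not expect a genuine obstacle here: just as for strings, the proposition amounts to matching two definitions through the explicit description of the compression functor $F$. The only point that needs a little care --- and the sole place the minimal-position hypothesis is really used --- is the verification that the band complex has no projective-injective summand, which is what makes Corollary 1.3.12 applicable and which would fail for a non-minimal representative (the superfluous crossings producing precisely the contractible, i.e. projective-injective, summands).
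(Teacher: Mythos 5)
Your argument is correct and follows the same route the paper takes: the paper's proof is literally the one-liner ``The proof follows from Corollary 1.3.12.\ and the two definitions above,'' and you are supplying exactly the details that sentence compresses. The only substantive thing you add beyond the paper is the explicit verification that the band complex has no projective--injective summand in $\Ccat^b(\proj A)$ --- which the paper leaves tacit --- and your justification (the arcs $\gamma_i$ of a minimal-position representative yield nontrivial paths, so each $f_i$ and hence the whole differential is a radical morphism, ruling out a contractible summand $0\to Q \xrightarrow{\id} Q \to 0$) is exactly the right reason and is a worthwhile addition.
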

	
	\begin{proof}
		The proof follows from Corollary 1.3.12. and the two definitions above.
	\end{proof}
	
	\begin{remark}
		If we consider the set of parameters $(\overset{\to}{\gamma}, p_0, J)$ as in definition 2.3.3. that we will denote $P(S,M,\Delta)$, then two distinct parameters do not necessarily define non isomorphic objects in $\barCM(A^\ltimes)$. 
		
		However, we can define two bijections that will preserve isomorphism classes :
		\begin{itemize}
			\item a rotation $\rho$ defined by $\rho(\overset{\to}{\gamma},p_0,J) = (\overset{\to}{\gamma},p_1,J^\varepsilon)$ where $p_1$ is the successor of $p_0$ in the decomposition of $\overset{\to}{\gamma}$, $\varepsilon=1$ if $\gamma_1$ and $\gamma_r$ have the boundary on the same side and $\varepsilon=-1$ otherwise.
			\item a symmetry $\sigma$ defined by $\sigma(\overset{\to}{\gamma},p_0,J) = (-\overset{\to}{\gamma},p_0,J^\varepsilon)$ where $-\overset{\to}{\gamma}$ is the band $\gamma$ with the other orientation, $\varepsilon=1$ if $\gamma_1$ and $\gamma_r$ have the boundary on the same side and $\varepsilon=-1$ otherwise.
		\end{itemize}
	\end{remark}
	
	\begin{proposition}
		If two parameters of $P(S,M,\Delta)$ are in the same orbit under the action of $\rho$ and $\sigma$ then the associated objects in $\barCM(A^\ltimes)$ are isomorphic. 
	\end{proposition}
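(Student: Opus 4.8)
The strategy is to prove the statement for the two generators $\rho$ and $\sigma$ of the group acting on $P(S,M,\Delta)$ by producing, in each case, an explicit isomorphism in $\Pcat(A)$ between the pair $(P,\varphi)$ attached to a parameter and the pair attached to its image; passing through the equivalence $\Pcat(A)\simeq\CM(A^\ltimes)$ this gives an isomorphism in $\barCM(A^\ltimes)$, and the general case follows by composing such isomorphisms along a word in $\rho^{\pm1},\sigma$.

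Fix $(\overset{\to}{\gamma},p_0,J)$ with $\overset{\to}{\gamma}=\gamma_1\cdots\gamma_r$, and let $(P,\varphi)$ be as in its definition: $P=\bigoplus_{i=0}^{r-1}P_{v_i}^{\oplus n}$ and $\varphi=\sum_{i=1}^{r-1}f_iI_n+f_rJ$, where $f_i$ is the $A$-module morphism attached to the arc $\gamma_i$ and relates the two summands $P_{v_{i-1}}^{\oplus n}$ and $P_{v_i}^{\oplus n}$ (indices cyclically, $v_r=v_0$). A direct inspection of the definitions of $\rho$ and $\sigma$ shows that both $\rho(\overset{\to}{\gamma},p_0,J)$ and $\sigma(\overset{\to}{\gamma},p_0,J)$ are attached to the same necklace of projectives — after the evident reindexing of the summands coming from the cyclic shift of the base point, respectively the reversal of the traversal order, which do not alter the maps $f_i$ since the morphism attached to an arc depends only on the arc and its polygon and not on the direction in which the curve runs through it — but with the differential $\sum_{i=2}^{r}f_iI_n+f_1J^{\varepsilon}$, i.e. with the twist $J$ transported from the edge $f_r$ onto the edge $f_1$, these being the two edges incident to the distinguished summand $P_{v_0}^{\oplus n}$. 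The point is that this transport of the twist around $P_{v_0}$ is realized by conjugation. Let $u$ be the $A$-linear automorphism of $P$ equal to the identity on $P_{v_i}^{\oplus n}$ for $i\neq 0$ and to $\mathrm{id}_{P_{v_0}}\otimes c$ on $P_{v_0}^{\oplus n}$, for an invertible $c\in GL_n(k)$ to be chosen. Being supported on $P_{v_0}^{\oplus n}$, $u$ commutes with every $f_iI_n$ with $i\notin\{1,r\}$, while conjugation by $u$ multiplies the block $f_rJ$ and the block $f_1I_n$ by $c^{\pm1}$ on the relevant side; reading off, via the dictionary of the geometric model, which of $\gamma_1,\gamma_r$ runs into and which runs out of the edge of $\Delta$ carrying $v_0$, one checks that the choice $c=J^{\pm1}$ makes $u\varphi u^{-1}$ equal to $\sum_{i=2}^{r}f_iI_n+f_1J^{\varepsilon}$, the exponent of $J$ landing on $f_1$ being exactly the sign $\varepsilon$ recorded in the definitions of $\rho$ and $\sigma$. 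Thus $u$ is an isomorphism $(P,\varphi)\overset{\sim}{\to}(P',\varphi')$ in $\Pcat(A)$, whence $M_{\rho(\overset{\to}{\gamma},p_0,J)}\simeq M_{(\overset{\to}{\gamma},p_0,J)}$ and $M_{\sigma(\overset{\to}{\gamma},p_0,J)}\simeq M_{(\overset{\to}{\gamma},p_0,J)}$.

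Since $u$ is a genuine isomorphism (hence invertible), composing the isomorphisms just constructed — and their inverses — along any word in $\rho^{\pm1}$ and $\sigma$ relating two parameters in a common orbit yields an isomorphism between the corresponding objects of $\barCM(A^\ltimes)$, which is the assertion. I expect the only delicate part to be the bookkeeping in the second paragraph: one has to make sure that the reindexing identifications match the formula defining the band object for the image parameter, and — most importantly — that the in/out direction of each $f_i$, hence the precise value $\pm1$ of the exponent, is tracked correctly through the cyclic shift and through the reversal of orientation; this is exactly where the sign $\varepsilon$ is forced. The remaining verifications (that $u$ is $A$-linear and invertible, and the block computation of $u\varphi u^{-1}$) are routine.
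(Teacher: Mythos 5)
Your proposal is correct and takes essentially the same approach as the paper: the paper likewise constructs the isomorphism $\phi = \sum_{i\neq 0}\id_{P_{v_i}}I_n + \id_{P_{v_0}}J^{\delta}$ supported by a power of $J$ on the distinguished summand $P_{v_0}^{\oplus n}$, tracks the sign $\delta$ from the in/out direction of $f_r$, and handles $\sigma$ by observing that the map associated to an arc is orientation-independent so that $M_{\sigma(\cdot)} \simeq M_{\rho(\cdot)}$ via the same map.
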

	
	\begin{proof}
		Let $(\overset{\to}{\gamma},p_0,J)$ be a parameter. Let $\overset{\to}{\gamma} = \gamma_1 \dots \gamma_r$ be the associated decomposition. We define $\delta$ as follows 
		\[\delta = \begin{cases}
			1 \; \textup{if} \; f_r : P_{v_r} \to P_{v_0} \\
			-1 \; \textup{if} \; f_r : P_{v_0} \to P_{v_r}
		\end{cases}\]
		We now consider the isomorphism $\phi : M_{(\overset{\to}{\gamma},p_0,J)} \to M_{\rho(\overset{\to}{\gamma},p_0,J)}$ defined by $\phi = \sum_{i=1}^{r-1} \id_{P_{v_i}} I_n + \id_{P_{v_0}} J^\delta$.
		
		The way we defined $\delta$ ensures that it is a well defined isomorphism in $\barCM(A^\ltimes)$.
		
		Furthermore, if $\overset{\to}{\gamma} = \gamma_1 \dots \gamma_r$ is the decomposition with starting point $p_0$ then the decomposition of $-\overset{\to}{\gamma}$ with starting point $p_0$ is $-\overset{\to}{\gamma} = \gamma_r^{-1} \dots \gamma_1^{-1}$.
		
		We first recall that the map defined by the arc $\gamma_i$ does not depend on an orientation. By construction, we even have 
		\[M_{\sigma(\overset{\to}{\gamma},p_0,J)} \simeq M_{\rho(\overset{\to}{\gamma},p_0,J)}\]
		We can use the same isomorphism $\phi$ as before.
	\end{proof}
	
	\begin{corollary}
		Let $\gamma$ be a band of $(S,M,\Delta)$ and $(k ^n,J)$ an indecomposable $k[x,x^{-1}]$-module. There are at most two isomorphism classes of objects of $\barCM(A^\ltimes)$ defined as in Definition 2.3.3..
	\end{corollary}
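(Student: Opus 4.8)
The plan is to reduce everything to the previous proposition by adding one elementary isomorphism. Fix an orientation $\overset{\to}{\gamma}$ of $\gamma$ and a starting point $p_0$, with $\overset{\to}{\gamma} = \gamma_1 \cdots \gamma_r$ the associated decomposition, so that $M_{(\overset{\to}{\gamma},p_0,J)}$ is the image in $\barCM(A^\ltimes)$ of the pair $(P,\varphi)$ with $P = \bigoplus_{i=0}^{r-1} P_{v_i}^{\oplus n}$ and $\varphi = \sum_{i=1}^{r-1} f_i I_n + f_r J$, as in Definition 2.3.3.

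First I would observe that $M_{(\overset{\to}{\gamma},p_0,J)}$ depends on $J$ only through the isomorphism class of the $k[x,x^{-1}]$-module $(k^n,J)$, i.e. only through the conjugacy class of the matrix $J$. Indeed, for any $Q \in GL_n(k)$, applying $Q$ simultaneously to every summand $P_{v_i}^{\oplus n}$ of $P$ is an automorphism of the underlying $A^{\ltimes}$-module, and it conjugates $\varphi$ into $\sum_{i=1}^{r-1} f_i I_n + f_r (Q^{-1} J Q)$: the scalar blocks $f_i I_n$ are untouched because $Q$ commutes with $I_n$, while the single remaining block $f_r J$ becomes $f_r (Q^{-1}JQ)$. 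Hence $M_{(\overset{\to}{\gamma},p_0,J)} \simeq M_{(\overset{\to}{\gamma},p_0,Q^{-1}JQ)}$ in $\barCM(A^\ltimes)$, and there is no loss in fixing a single representative matrix $J$ of its conjugacy class.

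Next I would invoke the previous proposition together with the explicit descriptions of the rotation $\rho$ and the symmetry $\sigma$. The rotation fixes the orientation and cyclically permutes the starting points $p_0,\dots,p_{r-1}$, changing the last coordinate to $J^{\varepsilon}$ with $\varepsilon \in \{\pm 1\}$, so that after any number of applications it is still one of $J$, $J^{-1}$; the symmetry reverses the orientation, keeps the starting point, and again changes $J$ only into $J^{\pm 1}$. Now let $M_{(\overset{\to}{\gamma}',p',J')}$ be any object as in Definition 2.3.3 whose underlying unoriented band is $\gamma$ and with $(k^n,J')\simeq (k^n,J)$. Using the first step we may assume $J'=J$; applying $\sigma$ if needed we may assume the orientation is $\overset{\to}{\gamma}$; applying a suitable power of $\rho$ we may bring the starting point back to $p_0$. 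By the previous proposition each of these replacements is an isomorphism, and the end result is either $M_{(\overset{\to}{\gamma},p_0,J)}$ or $M_{(\overset{\to}{\gamma},p_0,J^{-1})}$. Hence there are at most two isomorphism classes.

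The one point requiring genuine care is the first step: checking that the diagonal $GL_n(k)$-conjugation really defines a morphism in $\barCM(A^\ltimes)$ and has the stated effect on $\varphi$ — which is the single verification sketched above — and, secondarily, reading off from the definitions of $\rho$ and $\sigma$ that they alter the third coordinate only by a sign, so that this coordinate never escapes $\{J, J^{-1}\}$. Everything else is the combinatorics of the $\langle \rho,\sigma\rangle$-action that is already packaged in the previous proposition.
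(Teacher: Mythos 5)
Your argument is correct and is exactly the intended deduction from Proposition 2.3.7 (which the paper leaves implicit, since the corollary is stated without its own proof): the $\langle\rho,\sigma\rangle$-action is transitive on the $2r$ pairs (orientation, starting point), and each generator only changes the third coordinate by $J\mapsto J^{\pm 1}$, so every parameter is moved to one of $(\overset{\to}{\gamma},p_0,J)$ or $(\overset{\to}{\gamma},p_0,J^{-1})$. You also correctly make explicit the step the paper tacitly assumes, namely that $M_{(\overset{\to}{\gamma},p_0,J)}$ depends only on the conjugacy class of $J$ — the diagonal $GL_n(k)$-conjugation on $\bigoplus_i P_{v_i}^{\oplus n}$ fixes the blocks $f_iI_n$ and conjugates the single block $f_rJ$, as you say — so the bound holds over the whole isomorphism class of the module $(k^n,J)$, not just for a fixed matrix.
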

	
	\begin{remark}
		It is not sufficient to give either starting point or an orientation, both are needed to properly define the object $M_{(\overset{\to}{\gamma},p_0,J)}$. However, giving a decomposition $\gamma = \gamma_1\dots \gamma_r$ fixes an orientation and a starting point. We will write $M_{(\gamma,J)}$ when a decomposition of $\gamma$ has been fixed. If a decomposition is not fixed, this notation may represent two different, non isomorphic objects.
	\end{remark}
	
	\begin{example}
		Considering the algebra as in Example 2.1.5., we can consider the closed curve homotopic to the boundary component. 
		
		\begin{center}
			\begin{tikzpicture}
				\begin{scope}[on background layer]
					
					\draw (4,4) circle (1);
					\node [label = {[left, purple]:$\gamma$}] at (2.53,5.47) {};
					\node at (2,4) {\tikz \draw[-triangle 90,purple] (0,0) -- +(0,-0.1);};
					\node at (6,4) {\tikz \draw[-triangle 90,purple] (0,0) -- +(0,0.1);};
				\end{scope}
				
				\begin{scope}[dashed, every node/.style={sloped,allow upside down}]
					\draw[blue] (0,0)-- node {\midarrow} (8,0);
					\draw[black] (0,0)-- node {\midarrow} (0,8);
					\draw[blue] (0,8)-- node {\midarrow} (8,8);
					\draw[black] (8,0)-- node {\midarrow} (8,8);
				\end{scope}
				
				\begin{scope}[on above layer]
					\filldraw[red] (4,3) circle (3pt) node (1) {$ $};
					\filldraw[red] (4,5) circle (3pt) node (2) {$ $};
					\filldraw[green] (3,4) circle (3pt) node (3) {$ $};
					\filldraw[green] (5,4) circle (3pt) node (4) {$ $};
					\filldraw[purple] (4,2) circle (3pt) node[above right] (5) {$p_0$};

					\draw[red] (4,5) to node[right, midway] {$1$} (4,8);
					\draw[red] (4,3) to node[right, midway] {$1$} (4,0);
					
					\draw[red] (4,5) to node[below, midway] {$3$} (8,8);
					\draw[red] (4,3) to node[below, midway] {$3$} (0,0);
					
					\draw[red] (4,5) to node[right, midway] {$2$} (5.71,8);
					\draw[red] (5.71,0) to node[right, midway] {$2$} (8,4);
					\draw[red] (0,4) to node[right, midway] {$2$} (2.29,8);
					\draw[red] (2.29,0) to node[right, midway] {$2$} (4,3);
					
					\draw[purple, very thick] (4,4) circle (2);
				\end{scope}
			\end{tikzpicture}
		\end{center}
		The curve $\gamma$ does not admit any grading for the dissection $\Delta$. However, using the above definition, if $(k^n,J)$ is an indecomposable $k[x,x^{-1}]$-module, using the starting point and orientation as in the figure above, we have:
		\[M_{(\overset{\to}{\gamma},p_0,J)} = ((P_1 \oplus P_2 \oplus P_3)^{\oplus 2n}, a I_n + b I_n + ab' I_n + a' I_n + b' I_n + a'b J) \]
	\end{example}
	
	\begin{example}
		Let $A$ be the gentle algebra defined by the following quiver with relation.
		\begin{center}
			\begin{tikzpicture}
				\node (1) at (0,0) {$1$};
				\node (2) at (1,1.5) {$2$};
				\node (3) at (2,0) {$3$};

				\draw[->] (1) to node[left, midway] {$\delta$} (2);
				\draw[->] (2) to node[right, midway] {$\alpha$} (3);
				\draw[->] (3) to node[below, midway] {$\beta$} (1);
				\draw[dotted, bend right=90] (0.75,1.1) to (1.25,1.1);
			\end{tikzpicture}
		\end{center}
		
		The associated dissected marked surface is as follows :
		
		\begin{center}
			\begin{tikzpicture}
				\draw (0,0) circle (1);
				\draw (0,0) circle (4);
				
				\begin{scope}[on above layer]
					\filldraw[red] (0,1) circle (3pt) node (1) {$ $};
					\filldraw[red] (0,4) circle (3pt) node (2) {$ $};
					\filldraw[red] (0,-4) circle (3pt) node (3) {$ $};
					
					\draw[red] (0,1) to node[right, midway] {$2$} (0,4);
					\draw[red, bend right=80] (0,4) to node[right, near start] {$1$} (0,-4);
					\draw[red, bend left=80] (0,4) to node[right, near start] {$3$} (0,-4);
					
					\draw[purple, very thick] (0,0) circle (2);
					\node[purple] (4) at (0,-2.3) {$\gamma$};
				\end{scope}
			\end{tikzpicture}
		\end{center}
		Here, the closed curve $\gamma$ does not correspond to a band object in $\Dcat^b(\mod \, A)$ but it does correspond to a family generalized band object. If we consider the indecomposable $k[X]$-module $(k,\lambda)$ with $\lambda \in k^\times$ then the object $M_{(\gamma,\lambda)}$ is the $A^\ltimes$-module $(P_2, \lambda(\alpha \beta \delta))$ and corresponds to the following representation :
		
		\begin{center}
			\begin{tikzpicture}
				\node (1) at (0,0) {$k$};
				\node (2) at (1,1.5) {$k^2$};
				\node (3) at (2,0) {$k$};

				\draw[->] (1) to node[left, midway] {$\begin{pmatrix}
						0 \\
						1
					\end{pmatrix}$} (2);
				\draw[->] (2) to node[right, midway] {$(1 \; 0)$} (3);
				\draw[->] (3) to node[below, midway] {$\id$} (1);
				\draw[->] (1.250) arc(15:-285:0.4)node[left,midway] {$0$};
				\draw[->] (3.290) arc(-195:105:0.4) node[right,midway] {$0$};
				\draw[->] (2.130) arc(240:-60:0.4) node[above, midway] {$\begin{pmatrix}
						0 & \lambda \\
						0 & 0
					\end{pmatrix}$};
				
			\end{tikzpicture}
		\end{center}
		
	\end{example}
	
	\subsection{Winding numbers in $\underbar{CM}(A^\ltimes)$}
	
	We will now see how to obtain the winding numbers of bands of $(S,M)$ by looking at the generalized band objects in $\underbar{CM}(A^\ltimes)$.
	
	First, we give a combinatorial definition of the winding number of a band. This definition was shown to be equivalent to the classical definition in \cite{APS}.
	
	\begin{definition}
		Let $\gamma$ be a band of $(S,M,\Delta)$ and $\gamma = \gamma_1 \dots \gamma_r$ be a decomposition with respect to $\Delta$. We define the winding number of $\gamma$ to be $w_\Delta(\gamma) = \displaystyle\sum_{i=1}^r \varepsilon_i$ where 
		\[\forall 1 \leq i \leq r, \; \varepsilon_i = \begin{cases}
			1 \; \textup{if the boundary is on the left of $\gamma_i$} \\
			-1 \; \textup{else}
		\end{cases}\]
	\end{definition}
	
	To recover the winding number of a band, we consider the family of automorphisms of $\barCM(A^\ltimes)$ defined in the following way.
	
	Let $\lambda$ be in $k^\times$, we have an automorphism of algebras :
	\begin{align*}
		t_\lambda : A^\ltimes &\to A^\ltimes \\
		(a,x) &\mapsto (a,\lambda x)
	\end{align*}
	This automorphism is the one given in \cite[Definition 4.1]{AO13} and that was also used in the proof of Proposition 1.4.1.
	
	This automorphism induces an autoequivalence of $\mod \, A^\ltimes$ which restricts to an autoequivalence of $\underbar{CM}(A^\ltimes)$. We denote $t_\lambda(M)$ the image of $M$ by the autoequivalence induced by $t_\lambda$.
	
	\begin{proposition}
		Let $A$ be a finite dimensional gentle algebra of finite global dimension and $(S,M,\Delta)$ its marked surface with dissection.
		\begin{itemize}
			\item Let $\gamma$ be a string of $(S,M)$, then $t_\lambda (M_\gamma) \simeq M_\gamma$.
			\item Let $\gamma = \gamma_1 \dots \gamma_r$ be a band of $(S,M)$ and $(k^n,J)$ an indecomposable $k[x,x^{-1}]$-module, then :
			\[t_\lambda(M_{(\gamma,J)}) \simeq M_{(\gamma, \lambda^{\varepsilon w_\Delta(\gamma)}J)} \]
			where $\varepsilon =1$ if $f_r : P_{v_{r-1}} \to P_{v_0}$ and $\varepsilon = -1$ otherwise.
		\end{itemize}
	\end{proposition}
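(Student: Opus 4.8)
The plan is to realize both sides explicitly as differential modules via the equivalence $\CM(A^\ltimes) \simeq \Pcat(A)$ of Proposition 1.2.10, and then to exhibit the isomorphism by conjugating the differential by a suitable diagonal automorphism. The first step is to determine the effect of $t_\lambda$ on a differential module. Since $t_\lambda$ restricts to the identity on $A \subset A^\ltimes$, the autoequivalence it induces on $\mod A^\ltimes$ leaves the underlying $A$-module unchanged, and a direct check of the action of $\varepsilon = (0,1)$ shows that (with the relevant convention) if $M$ corresponds to $(P,\varphi)$ under $\Psi$, then $t_\lambda(M)$ corresponds to $(P, \lambda \varphi)$. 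In particular $t_\lambda$ preserves $\CM(A^\ltimes)$, and $t_\lambda(M_\gamma)$, resp. $t_\lambda(M_{(\gamma,J)})$, is the differential module with the same underlying projective as $M_\gamma$, resp. $M_{(\gamma,J)}$, but with its differential multiplied by $\lambda$.

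Next, write the differential of $M_{(\gamma,J)}$ (relative to the fixed decomposition $\gamma = \gamma_1 \cdots \gamma_r$) as $\sum_{i=1}^{r-1} f_i I_n + f_r J$, where $f_i$ is the morphism attached to the arc $\gamma_i$, going between the two projectives indexed by its endpoints (so $f_r$ goes between $P_{v_{r-1}}$ and $P_{v_0}$). Let $\epsilon_i \in \{\pm 1\}$ record whether $f_i$ points forward along the chosen orientation; by the conventions governing the grading of a string and the winding number (Definition 2.4.1), this sign is the $\varepsilon_i$ of that definition, so $\epsilon_1 + \cdots + \epsilon_r = w_\Delta(\gamma)$ and $\epsilon_r$ equals the $\varepsilon$ of the statement. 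I would then conjugate $\lambda \varphi$ by the automorphism $\psi = \bigoplus_{i=0}^{r-1} c_i \, (\id_{P_{v_i}} \otimes I_n)$, where the scalars $c_i \in k^\times$ are defined by $c_0 = 1$ and $c_i = \lambda^{-(\epsilon_1 + \cdots + \epsilon_i)}$. This choice is made precisely so that conjugation turns each term $\lambda f_i I_n$ with $i < r$ back into $f_i I_n$, and since $\psi$ is the identity on the $k^n$ factors it commutes with $J$.

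It then remains to read off the effect of the conjugation on the term $f_r J$. Because the positions are cyclic ($P_{v_r} = P_{v_0}$) one cannot simultaneously cancel the factor $\lambda$ there, and a short computation using $\epsilon_1 + \cdots + \epsilon_{r-1} = w_\Delta(\gamma) - \epsilon_r$ gives, in both cases $\epsilon_r = \pm 1$, that $\psi (\lambda\varphi) \psi^{-1} = \sum_{i=1}^{r-1} f_i I_n + f_r(\lambda^{\varepsilon w_\Delta(\gamma)} J)$. This is exactly the differential defining $M_{(\gamma, \lambda^{\varepsilon w_\Delta(\gamma)} J)}$ — and $(k^n, \lambda^{\varepsilon w_\Delta(\gamma)} J)$ is again indecomposable, being the restriction of $(k^n, J)$ along an algebra automorphism of $k[x,x^{-1}]$ — so $\psi$ furnishes the desired isomorphism, in fact of $A^\ltimes$-modules. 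For a string the intersection points form a chain with no wrap-around, so the same recipe $c_i = \lambda^{-(\epsilon_1 + \cdots + \epsilon_i)}$ cancels every factor $\lambda$ and yields $t_\lambda(M_\gamma) \simeq M_\gamma$, which is the first assertion.

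The main obstacle is not conceptual but one of bookkeeping and consistency of conventions: one must fix, compatibly, (i) the autoequivalence induced by $t_\lambda$ so that it acts as $\varphi \mapsto \lambda\varphi$ (the opposite choice negates the exponent), (ii) the dictionary between the direction of the maps $f_i$, the side of the boundary along $\gamma_i$, and the sign $\varepsilon_i$ in Definition 2.4.1, and (iii) the cyclic indexing of the band decomposition, so that the residual scalar appears on the correct term $f_r J$ with the correct power of $\lambda$. Once these are pinned down, the argument is the routine diagonal-conjugation computation sketched above.
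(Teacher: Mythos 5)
Your proposal is correct and is essentially the same argument as the paper's: the paper likewise observes that $t_\lambda$ acts by scaling the differential $\varphi \mapsto \lambda\varphi$, conjugates by the diagonal automorphism $\phi = \sum_i \lambda^{-\eta_i}\,\id_{P_{v_i}}$ with $\eta_i = \sum_{j\le i}\varepsilon_j$ (your $c_i$), and reads off the residual scalar $\lambda^{\varepsilon w_\Delta(\gamma)}$ on the cyclic term $f_r J$.
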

	
	\begin{proof}
		Let $\gamma = \gamma_0\gamma_1 \dots \gamma_r\gamma_{r+1}$ be a string of $(S,M)$, we have $M_\gamma = (\bigoplus_{i=0}^{r} P_{v_i}, \sum_{i=1}^r f_i)$.
		
		\noindent Furthermore, if $\lambda$ is in $k^\times$, we have $t_\lambda(M_\gamma) = (\bigoplus_{i=0}^{r} P_{v_i}, \sum_{i=1}^r \lambda f_i)$.
		
		\noindent Consider now the following integers :
		
		\[\forall 1 \leq i \leq r, \; \varepsilon_i = \begin{cases}
			1 &\; \textup{if the boundary is to the left of $\gamma_i$} \\
			-1 &\; \textup{else}
		\end{cases}\]
		\[\forall 0 \leq i \leq r, \;  \eta_i = \sum_{j=1}^i \varepsilon_i \]
		We claim that the map $\phi = \sum_{i=0}^r \lambda^{- \eta_i} \id_{P_{v_i}}$ defines an isomorphism between $(M_\gamma)_\lambda$ and $M_\gamma$. To check it, it suffices to show 
		\[\left(\sum_{i=0}^r \lambda^{- \eta_i} \id_{P_{v_i}}\right) \circ \left(\sum_{i=1}^r \lambda f_i\right) \circ \left( \sum_{i=0}^r \lambda^{\eta_i} \id_{P_{v_i}}\right) = \sum_{i=1}^r f_i\]
		
		To do so, we look at the coeffecient in front of $f_i$ when doing the composition on the left. We have
		\begin{itemize}
			\item $\lambda^{\eta_{i-1} - \eta_i +1}$ when $\varepsilon_i = 1$ i.e. $f_i : P_{v_{i-1}} \to P_{v_i}$ ;
			\item $\lambda^{\eta_i - \eta_{i-1} +1}$ when $\varepsilon_i = -1$ i.e. $f_i : P_{v_i} \to P_{v_{i-1}}$.
		\end{itemize}
		
		In both cases, this coeffecient simplifies to 1.
		
		If $\gamma = \gamma_1 \dots \gamma_r$ is a band of $(S,M)$ and $(k^n,J)$ is an indecomposable $k[x,x^{-1}]$-module, we define the integers $(\varepsilon_i)_{1\leq i \leq r}$ and $(\eta_i)_{0 \leq i \leq r}$ as above. We claim that the map $\phi' = \sum_{i=0}^{r-1} \lambda^{- \eta_i} \id_{P_{v_i}^{\oplus n}}$ from $t_\lambda (M_{\gamma,J})$ to $M_{\gamma,\lambda^{\varepsilon w_\Delta(\gamma)}J}$ is an isomorphism. It is sufficient to show that it is well defined. To do so, we need only check 
		
		\[\left(\sum_{i=0}^{r-1} \lambda^{- \eta_i} \id_{P_{v_i}^{\oplus n}} \right) \circ \left(\sum_{i = 1}^{r-1} \lambda f_i I_n + \lambda f_r J \right) \circ \left(\sum_{i=0}^{r-1} \lambda^{- \eta_i} \id_{P_{v_i}^{\oplus n}} \right) = \sum_{i = 1}^{r-1} f_i I_n + \lambda^{\varepsilon w_\Delta(\gamma)} f_r J\]
		
		As before, to check this, we look at the coeffecient in front of each of the $f_i I_n$ for $1 \leq i \leq r-1$. The computation of the coeffecient is done in the same way as for the strings. We only need to check the coeffecient in front of $f_r J$. This coeffecient is 
		\begin{itemize}
			\item $\lambda^{\eta_{r-1} +1}$ if $\varepsilon_r = 1$ i.e. $f_r : P_{v_{r-1}} \to P_{v_0}$ ;
			\item $\lambda^{-\eta_{r-1} + 1}$ if $\varepsilon_r = -1$ i.e. $f_r : P_{v_0} \to P_{v_{r-1}}$.
		\end{itemize}
		These two cases can be summed up by the coeffecient being $\lambda^{\varepsilon w_\Delta(\gamma)}$ as $\varepsilon = \varepsilon_r.$
	\end{proof}
	
	\begin{remark}
		The autoequivalences $(t_\lambda)_{\lambda \in k^\times}$ define an action of $k^\times$ on the set of isomorphism classes of indecomposable objects of $\barCM(A^\ltimes)$. From Proposition 2.4.2. and \cite[Theorem 4.2]{AO14}, we recover the fact that the only band onjects in $\Dcat^b(A)$ are the one corresponding to closed curves with zero winding number.
	\end{remark}

	\section{Indecomposable objects of $\barCM(A^\ltimes$)}
	
	In this section, we prove that the objects of $\barCM(A^\ltimes)$ constructed in Section 2 are exactly the indecomposable objects of the category $\barCM(A^\ltimes)$ up to isomorphism.
	
	The proof completely follows the one in \cite{BM} which describes the indecomposable objects of $\Dcat^b(A)$ in terms of homotopy strings and homotopy bands. The key ingredient is a theorem of Bondarenko and Drozd (\cite{BD}) which gives a description of the indecomposable objects of a certain additive category $\Scat(\Ycat,\sigma,k)$ associated to a linearly ordered set with involution $(\Ycat,\sigma)$ in terms of strings and bands on a certain quiver.
	
	The strategy of the proof is as follows :
	
	\indent Given a gentle algebra $A$, we first define a linearly ordered set with involution $(\Ycat_A,\sigma_A)$ (Section 3.3) and then construct a functor $G : \CM^\textup{rad}(A^\ltimes) \to \Scat(\Ycat_A,\sigma_A,k)$ that sends non isomorphic indecomposable objects of $\CM^\textup{rad}(A^\ltimes)$ that are non projective to non isomorphic indecompasable objects of $\Scat(\Ycat_A,\sigma_A,k)$ (Proposition 3.3.5.).
	
	\indent Then we define an injective map $\gamma$ (resp. $\gamma_b$) from the set of homotopy strings (resp. homotopy bands) of $A$ to the set of strings (resp. bands) on the quiver defined by $(\Ycat_A,\sigma_A)$ (Lemma 3.3.8.).
	
	\indent We then show that $\gamma$ (resp. $\gamma_b$) is compatible with $G$ in the sense that

	\begin{multicols}{2}
		\begin{tikzpicture}
			\node (1) at (0,0) {$\overline{GSt(A)}$};
			\node (2) at (4,0) {$\overline{St(\Ycat_A)}$};
			\node (3) at (0,-2) {$\CM^\textup{rad}(A^\ltimes)$};
			\node (4) at (4,-2) {$\Scat(\Ycat_A,\sigma_A,k)$};
			
			\draw[->] (1) to node[above,midway] {$\gamma$} (2);
			\draw[->] (3) to node[below, midway] {$G$} (4);
			\draw[->] (1) to (3);
			\draw[->] (2) to (4);
		\end{tikzpicture}
		
		\begin{tikzpicture}
			\node (1) at (0,0) {$\overline{GBa(A)}$};
			\node (2) at (4,0) {$\overline{Ba(\Ycat_A)}$};
			\node (3) at (0,-2) {$\CM^\textup{rad}(A^\ltimes)$};
			\node (4) at (4,-2) {$\Scat(\Ycat_A,\sigma_A,k)$.};
			
			\draw[->] (1) to node[above,midway] {$\gamma_b$} (2);
			\draw[->] (3) to node[below, midway] {$G$} (4);
			\draw[->] (1) to (3);
			\draw[->] (2) to (4);
		\end{tikzpicture}
	\end{multicols}
	
	Finally we show (Lemma 3.3.9.) that the strings (resp. bands) in the image of $\gamma$ (resp. $\gamma_b$) are exactly the string objects (resp. band objects) in the image of $G$.
	
	The section will be organized as follows :
	\begin{itemize}
		\item First we recall from \cite{BM} the definition of $\Scat(\Ycat,\sigma,k)$ and the description of its indecomposable objects (Section 3.1).
		\item Then we follow \cite{OPS} to recall the definition of homotopy strings and homotopy bands on the quiver of a gentle algebra and how they relate to strings and bands on the associated surface (Section 3.2).
		\item Finally, we define the linearly ordered set with involution $(\Ycat_A,\sigma_A)$, construct the functor $G$ and prove the description of indecomposable objects of $\barCM(A^\ltimes)$ (Section 3.3).
	\end{itemize}
	
	\subsection{Bondarenko's representations of linearly ordered sets }
	
	Here, $\Ycat$ is a linearly ordered set and $\sigma$ an involution of $\Ycat$. In this section, all matrices will be with coefficients in $k$.
	
	\begin{definition}
		Let $B = (B_i^j)_{i,j \in \Ycat}$ be a finite block matrix, meaning that there is a finite number of non-zero blocks. Denote $l_i$ (resp. $c_j$) the number of rows (reps. columns) of the block $B_i^j$. Let $C = (C_i^j)_{i,j \in \Ycat}$ be another block matrix such that $C_i^j \in \Mcat_{l'_i,c'_j}(k)$. We say that the horizontal partition of $B$ is compatible with the vertical partition of $C$ when for every $i \in \Ycat$, $l_i = c'_i$.
		\\
		When this is the case, we can define $CB$ by block product.
	\end{definition} 
	
	\begin{definition}
		A $(\Ycat,\sigma)$-matrix is a finite block matrix $B = (B_i^j)_{i,j \in \Ycat}$ with coefficients in $k$ such that :
		\begin{itemize}
			\item The horizontal and vertical partitions of $B$ are compatible.
			\item If $i,j \in \Ycat$ are such that $\sigma(i) = j$ then the horizontal bands $B_i$ and $B_j$ have the same number of rows. Equivalently, the vertical bands $B^i$ and $B^j$ have the same number of columns.
			\item $B^2 = 0$
		\end{itemize}
	\end{definition}
	
	In order to define a category of $(\Ycat,\sigma)$-matrices, we need to define what a morphism between two $(\Ycat,\sigma)$-matrices is.
	
	\begin{definition}
		Let $B$ and $C$ be two $(\Ycat,\sigma)$-matrices. A morphism $T : B \to C$ is a block matrix $T = (T_i^i)_{i,j \in \Ycat}$ such that :
		\begin{itemize}
			\item The horizontal partition of $T$ is compatible with the vertical partition of $B$ and the vertical partition of $T$ is compatible with the horizontal partition of $C$ ;
			\item $TC = BT$ ;
			\item $T$ is upper triangular, i.e. if $j<i$ then $T_i^j = 0$ ;
			\item If $\sigma(i) = j$ then $T_i^i = T_j^j$.
		\end{itemize}
	\end{definition}
	
	We can now define the category $\Scat(\Ycat,\sigma,k)$ as the category with objects $(\Ycat,\sigma)$-matrices with coefficients in $k$ and morphisms those defined above. It is shown in \cite{B} that it is an additive $k$-linear category. Furthermore, the indecomposable objects of $\Scat(\Ycat,\sigma,k)$ are known and have an explicit description which we will now give.
	
	\begin{definition}
		We define the quiver $Q(\Ycat)$ as follows :
		\begin{itemize}
			\item the vertices are orbits of $\sigma$ : $Q(\Ycat)_0 = \Ycat / \sigma$, we will denote $[a]_\sigma$ the orbit of $a \in \Ycat$ ;
			\item the set of edges is $\Ycat \times \Ycat$ ;
			\item the source map is $s((a,b)) = [a]_\sigma$
			\item the target map is $t((a,b)) = [b]_\sigma$
		\end{itemize}
		We furthermore define two maps $p_1$ and $p_2$ to be the projections of $\Ycat \times \Ycat$ onto $\Ycat$ corresponding to the first component and the second component respectively.
		\\
		We also define the map $s : \Ycat^2 \to \Ycat^2$ that sends an arrow $(a,b)$ to its inverse arrow $(b,a)$.
	\end{definition}
	
	To describe indecomposable objects of $\Scat(\Ycat,\sigma,k)$ we will need to define strings and bands on $Q(\Ycat)$. However, these strings and bands are different from those we usually define on the quiver of a gentle algebra. We will therefore use a different notation.
	
	\begin{definition}
		We define $\text{St}(\Ycat)$ to be the set of paths $w = w_1 w_2 \dots w_n$ on the quiver $Q(\Ycat)$ such that : 
		\[ \forall i \in \{1,\dots , n-1\}, \; p_2(w_i) \neq p_1(w_{i+1})\]
		We extend the map $s$ to an involution sending a path to its inverse. We will denote $\overline{\textup{St}(\Ycat)}$ to be the set of equivalence classes of $\text{St}(\Ycat)$ under $s$. Its objects are called $\Ycat$-strings
	\end{definition}
	
	\begin{definition}
		We define $\textup{Ba}(\Ycat)$ to be the subset of $\text{St}(\Ycat)$ containing paths $w = w_1 \dots w_n$ such that $s(w_1) = t(w_n)$, $w^2 \in \textup{St}(\Ycat)$ and if $w = z^k$ then $k=1$ and $w=z$. 
		\\
		Consider the rotation map $r : Ba(\Ycat) \to Ba(\Ycat)$ sending a path $w = w_1 \dots w_n$ to the path $r(w) = w_2 \dots w_n w_1$. We will denote $\overline{\textup{Ba}(\Ycat)}$ to be the set of equivalence classes of $\textup{Ba}(\Ycat)$ under $s$ and $r$. Its elements are called $\Ycat$-bands
	\end{definition}
	
	We can now give the explicit description of indecomposable objects of $\Scat(\Ycat,\sigma,k)$.
	
	\begin{definition}
		Let $w = w_1 \dots w_n$ be a $\Ycat$-string. We define $c_j = t(w_j)$ which is in $\Ycat / \sigma$ for $j \in \{1,\dots,n\}$ and $c_0 = s(w_1)$ which is also in $\Ycat/ \sigma$. Let $V$ be a $k$-vector space of dimension $n+1$ and $v_0,\dots,v_n$ a basis of $V$. 
		\\
		For $x \in \Ycat$, we define $V_w([x]_\sigma) $ the subspace of $V$ generated by the $v_j$ such that $c_j = [x]_\sigma$.
		\\
		For $(x,y) \in Q(\Ycat)_1$ we define the linear map $V_w(x,y) : V_w([x]_\sigma) \to V_w([y]_\sigma)$ as follows 
		
		\[ V_w(x,y)(v_i) = \begin{cases} v_{i+1} \; \text{if} \;  w_{i+1} = (x,y) & \\ v_{i-1} \; \text{if} \;  w_i = (y,x) & \\ 0 \; \text{otherwise} \end{cases}\]
		
		We now define the $(\Ycat,\sigma)$-matrix $B_w$ as having the matrix of $V_w(x,y)$ in the fixed basis as block $(B_w)_x^y$. The number of rows of this block is $\# \{j \in \{0,\dots,n\} | t(w_j) = [y]_\sigma\}$ and the number of columns is $\# \{j \in \{0,\dots,n\} | t(w_j) = [x]_\sigma\}$.
	\end{definition}
	
	\begin{definition}
		We define in an analoguous way a $(\Ycat,\sigma)$-matrix for each band $w = w_1 \dots w_n$ equipped with an indecomposable $k[x,x^{-1}]$-module $(k^d,J)$. Let $\chi(t) = t^d + \sum_{i=0}^{d-1} a_i t^i$ be the characteristic polynomial of $J$. 
		\\
		Consider the vector space $V$ with basis $(v_{i,j})_{0 \leq i \leq n-1 ; 1 \leq j \leq d}$. For every $[x]_\sigma \in Q(\Ycat)_0$ we define the subspace $V_{w,J}([x]_\sigma)$ generated by the $v_{i,j}$ such that $c_j = [x]_\sigma$.
		\\
		For every $(x,y) \in Q(\Ycat)_1$, we define the linear map $V_{w,J}(x,y) : V_{w,J}([x]_\sigma) \to V_{w,J}([y]_\sigma)$ by 
		
		\[ V_{w,J}(x,y)(v_{i,j}) = \begin{cases}
			v_{i+1,j} &\; \text{if} \; i \neq n-1 \; \text{and} \; w_{i+1} = (x,y)  \\
			v_{i-1,j} &\; \text{if} \; i \neq 0 \; \text{and} \; w_i = (y,x) \\
			v_{0,j+1} &\; \text{if} \; i=n-1  , \, j \neq d \; \text{and} \; w_n = (x,y) \\
			v_{n-1,j+1} &\; \text{if} \; i=0  , \, j \neq d \; \text{and} \; w_n = (y,x) \\
			-\sum_{r=0}^{d-1} a_r v_{0,r} &\; \text{if} \; i=n-1 , \, j = d \; \text{and} \; w_n = (x,y) \\
			-\sum_{r=0}^{d-1} a_r v_{n-1,r} &\; \text{if} \; i=0  , \, j = d \; \text{and} \; w_n = (y,x) \\
			0 &\; \text{otherwise}
		\end{cases}\]
		We define the $(\Ycat,\sigma)$-matrix $B_{w,J}$ as having the matrix of $V_{w,J}(x,y)$ as block $(B_{w,J})_x^y$. The number of rows of this block is $d \times\# \{j \in \{0,\dots,n\} | t(w_j) = [y]_\sigma\}$ and the number of columns is $d \times \# \{j \in \{0,\dots,n\} | t(w_j) = [x]_\sigma\}$.
	\end{definition}
	
	\begin{remark}
		In this construction, the fifth and sixth cases cannot both happen for a same band. These cases, combined with the third and fourth ones (also only one of which will happen) make one of the blocks of $B_{w,J}$ have a sub-block that is the companion matrix of the polynomial $J$.
	\end{remark}
	
	\begin{remark}		
		Furthermore, in \cite{BM}, the parameter $J$ is an indecomposable polynomial in $k[x]$ that is not $x^d$. This has been changed here to an indecomposable $k[x,x^{-1}]$-module to ease the comparision with band objects in $\barCM(A^\ltimes)$. This change is purely esthetic as by considering a companion matrix or a characteristic polynomial, the two notions are identical.
	\end{remark}
	
	We now have the following theorem from \cite{BM}, combining \cite[Theorem~3]{BD} and \cite{NR}. 
	
	\begin{theorem}{\cite[Theorem~3]{BD}}
		Let $(\Ycat,\sigma)$ be a totally ordered set with involution. Let $B$ be an indecomposable object of $\Scat(\Ycat,\sigma,k)$, then one of the following occurs :
		\begin{itemize}
			\item there exists a unique $\Ycat$-string $w$ such that $B \simeq B_ w$ ;
			\item there exists a unique $\Ycat$-band $w$ and an indecomposable $k[x,x^{-1}]$-module $(k ^d,J)$ such that $B \simeq B_{w,J}$. 
		\end{itemize}
	\end{theorem}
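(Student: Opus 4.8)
Since the statement is the classification theorem of Bondarenko and Drozd, the most economical plan is simply to invoke it: the assertion about $\Ycat$-strings is exactly \cite[Theorem~3]{BD} for the totally ordered set with involution $(\Ycat,\sigma)$, and the appearance of an arbitrary indecomposable $k[x,x^{-1}]$-module in the band case is obtained by feeding into Bondarenko's reduction the classification of indecomposable modules over the principal ideal domain $k[x,x^{-1}]$ (rational canonical form), which is what \cite{NR} supplies. So the first thing I would do is match the $(\Ycat,\sigma)$-matrices $B_w$ and $B_{w,J}$ constructed above with the canonical representatives produced in those references, and check that the ambiguities $s$ (reversal) and $r$ (rotation) built into the definitions of $\overline{\textup{St}(\Ycat)}$ and $\overline{\textup{Ba}(\Ycat)}$ are precisely the ones identified there.

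If instead one wants a self-contained argument, the standard route is a matrix-reduction (``differentiation'') algorithm. The key observation is that an object of $\Scat(\Ycat,\sigma,k)$ is a finite block matrix $B$ with $B^2=0$, and an isomorphism is a block matrix $T$ that is invertible, upper triangular with respect to the order on $\Ycat$, constant on $\sigma$-orbits, and intertwines $B$ via $TB=BT$, acting simultaneously on rows and columns. The plan is: fix a minimal element $i_0$ of $\Ycat$ (or a minimal pair $\{i_0,\sigma(i_0)\}$), use the admissible row and column operations to bring the corresponding horizontal and vertical bands of $B$ into a normal form built only from identity and zero blocks, and record what is left as a smaller matrix problem of the same type over $\Ycat \setminus \{i_0,\sigma(i_0)\}$ with suitable multiplicities. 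Iterating and tracking which entries are forced to stay nonzero produces a walk in the quiver $Q(\Ycat)$; the conditions $p_2(w_i)\neq p_1(w_{i+1})$ and $B^2=0$ are exactly what survive, and one reads off a $\Ycat$-string, or, when the walk closes up and the final companion block admits no further reduction, a $\Ycat$-band together with an indecomposable $k[x,x^{-1}]$-module.

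Uniqueness then follows from the algorithm terminating in a genuine canonical form: two strings (resp. bands with modules) yield isomorphic objects only if the reductions match step by step, forcing $w=w'$ up to $s$ (resp. up to $s$ and $r$) and $J\simeq J'$ as $k[x,x^{-1}]$-modules. The main obstacle in a from-scratch proof is the bookkeeping in the inductive step: one must verify that reducing at a minimal $\sigma$-orbit yields again a $(\Ycat',\sigma')$-matrix problem (compatibility of partitions, the relation $B^2=0$, and the involution constraint on morphisms all have to be preserved), and that no indecomposable is lost or duplicated along the way. As all of this is carried out in \cite{BD} and \cite{NR}, I would take the theorem as a black box and use it only in that form in what follows.
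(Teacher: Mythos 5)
Your primary recommendation---to cite \cite[Theorem~3]{BD} together with \cite{NR} as a black box and match the constructions $B_w$, $B_{w,J}$ and the equivalences $s$, $r$ against the canonical forms there---is exactly what the paper does, which gives no proof of this theorem beyond the citation. The additional sketch of a from-scratch matrix-reduction argument is sound in outline but is extra material the paper does not attempt.
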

	
	We will use this description of indecomposables of $ \Scat(\Ycat,\sigma,k)$ by constructing an additive functor from a subcategory of $\textup{CM}(A^\ltimes)$ to $ \Scat(\Ycat,\sigma,k)$.
	
	\subsection{Homotopy strings and bands}
	
	We recall from \cite{OPS} the definition of homotopy strings and bands and the relation with strings and bands on the surface of $A$.
	
	Homotopy strings and bands are in bijection with the strings and bands of the associated graded surface $(S,M,\Delta)$ of the gentle algebra. 
	
	\begin{definition}
		Let $(Q,I)$ be a quiver with relations, $Q_1$ the set of arrows of $Q$ and $Q_0$ the set of vertices. For each arrow $\alpha \in Q_1$, we define its formal inverse $\alpha^{-1}$. Let $Q_1^{-1}$ be the set of formal inverses of all arrows in $Q_1$. We ask that this formal inverse to be an involution, such that $s(\alpha^{-1}) = t(\alpha)$ and that $t(\alpha^{-1}) = s(\alpha)$.
		\\
		A walk on $(Q,I)$ is a sequence $w = w_1\dots w_n$ where each $w_i$ lies in $Q_1$ or $Q_1^{-1}$. Furthermore, we ask that $t(w_i) = s(w_{i+1})$. We also consider trivial walks to be walks of length zero, we denote them $e_i$ where $i \in Q_0$ is its source and target.
		\\
		A string of $(Q,I)$ is a walk $w = w_1\dots w_n$ where for all $i \in \{1,\dots,n-1\}$, $w_{i+1} \neq w_i^{-1}$ and if $w_i \dots w_j$ is a subwalk of $w$ where all arrows lie in $Q_1$ (resp. $Q_1^{-1}$), then $w_i \dots w_j \notin I$ (resp. $(w_i \dots w_j)^{-1} \notin I$).
		\\
		If all $(w_i)_{1 \leq i \leq n}$ are in $Q_1$ (reps. $Q_1^{-1}$), we say that $w$ is a direct (resp. inverse) string.
	\end{definition}
	
	We can now define finite homotopy strings and homotopy bands.
	
	\begin{definition}
		Let $(Q,I)$ be the quiver of a gentle algebra $A$. A finite homotopy string is a walk $w$ that can be written as the concatenation of subwalks $(w_i)_{1 \leq i \leq r}$ such that :
		\begin{itemize}
			\item each $w_i$ is a direct or inverse string ;
			\item if $w_i$ and $w_{i+1}$ are both direct (resp.inverse) strings then $w_i w_{i+1} \in I$ (resp. $(w_i w_{i+1})^{-1} \in I $).
		\end{itemize}
		A homotopy band is a finite homotopy string $w$ such that $w^2$ is a finite homotopy string and if $w = z^k$ for $z$ a finite homotopy string then $k = 1$ and $z = w$. 
		\\
		A homotopy string or homotopy band is said to be reduced if for all $i \in \{1,\dots,r-1\}$, $w_{i+1} \neq w_i^{-1}$.
	\end{definition}
	
	We denote $GSt(A)$ (resp. $GBa(A)$) the set of reduced finite homotopy strings (resp. reduced homotopy bands) on $(Q,A)$.
	\\
	We will now define a map :
	\begin{align*}
		\tilde{\kappa} : GSt(A) &\to St(S,M) \\
		w = w_1\dots w_r &\mapsto \tilde{\kappa}(w).
	\end{align*} 
	Here, $\tilde{\kappa}(w)$ is the isotopy class of a curve constructed as follows :
	\begin{itemize}
		\item consider $\gamma_i$ the arc in a polygon of $(S,M,\Delta)$ joining two midpoints of arcs of $\Delta$ defined by $w_i$ which is a direct or inverse string of $A$ ;
		\item define $\tilde{\gamma}$ to be the curve defined by connecting the arcs $\gamma_i$, it is well defined because $t(w_i) = s(w_{i+1})$ ;
		\item at these endpoints, we can extend this curve to the other polygon of which $s(w)$ and $t(w)$ are the tags of the edges, so that the end points of this curve are the marked points in these polygons. We obtain an arc $\tilde{\kappa}(w)$ joining two marked points of $(S,M,\Delta)$.
	\end{itemize}	
	
	Because $w_{i+1}\neq w_i^{-1}$, all intersections of $\tilde{\kappa}(w)$ with $\Delta$ are transverse. 
	
	We define in the same way the map :
	\begin{align*}
		\tilde{\kappa_b} : GBa(A) &\to Ba(S,M) \\
		w = w_1\dots w_r \mapsto \tilde{\kappa_b}(w)
	\end{align*}
	
	This map is defined in an analog way to $\kappa$, but because $t(w) = s(w)$, the arcs in the polygons concatenate to form a band of $(S,M,\Delta)$.
	
	\begin{definition}
		The map $s : w \mapsto w^{-1}$ defines an involution of $GSt(A)$. We denote $\overline{GSt(A)}$ the set of equivalence classes of $GSt(A)$ under $s$.
		\\
		This map also defines an involution of $GBa(A)$. If we consider $r : w = w_1 \dots w_r \mapsto w_2 \dots w_r w_1$, it defines automorphism of $GBa(A)$. We denote $\overline{GBa(A)}$ the set of equivalence classes of $GBa(A)$ under $s$ and $r$.
	\end{definition}

	The map $\tilde{\kappa}$  (resp. $\tilde{\kappa_b}$) induces a map $\overline{GSt(A)} \to St(S,M)$ (resp. $\overline{GBa(A)} \to Ba(S,M)$) that we will denote $\kappa$ (resp. $\kappa_b$).
	
	\begin{proposition}{\cite[Lemma 2.15, Lemma 2.16]{OPS}}
		The maps $\kappa$ and $\kappa_b$ are bijections.
	\end{proposition}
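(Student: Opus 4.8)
The plan is to build an explicit inverse of $\tilde\kappa$ (resp. $\tilde\kappa_b$) and show that it descends to the quotients $\overline{GSt(A)}$ and $\overline{GBa(A)}$. Given a string $\delta\in St(S,M)$, choose a representative in minimal position with respect to $\Delta$ and list in order the arcs of $\Delta$ it meets, with intersection points $q_0,\dots,q_m$. Between two consecutive crossings $\delta$ stays inside a single polygon $P_{w^{(i)}}$ and runs from one red edge of it to another; by the observation recorded in Section 2.1, such an arc is a subpath of the maximal path $w^{(i)}$, hence a direct or inverse string $u_i$ of $(Q,I)$, and before $q_0$ (resp. after $q_m$) the curve ends at a marked point. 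Since polygons are glued along edges carrying the same quiver vertex, $t(u_i)=s(u_{i+1})$, so $u:=u_1u_2\cdots u_{m-1}$ is a walk on $(Q,I)$; set $\varrho(\delta):=u$. The same recipe applied to a primitive closed curve produces a cyclic walk and defines $\varrho_b$.

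The next step is to verify that $u$ is a \emph{reduced finite homotopy string} (resp.\ band). If some $u_{i+1}=u_i^{-1}$, the two corresponding arcs would bound a bigon with a single arc of $\Delta$, which could be removed by an isotopy lowering the number of intersections, contradicting minimal position; hence $u$ is reduced. For the homotopy-string condition one analyzes how $\delta$ passes from $P_{w^{(i)}}$ to the adjacent polygon through a red edge $e$: using the defining axioms of a gentle algebra (every arrow lies in a unique maximal path, and for each arrow $\alpha$ there is a unique continuation $\beta$ with $\alpha\beta\in I$ and a unique one with $\alpha\beta\notin I$), the curve either continues "in the same direction", which forces the junction arrow-pair to lie in $I$, or it reverses direction, which happens precisely when it rounds a marked vertex. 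Regrouping maximal same-direction runs then yields exactly the decomposition demanded in the definition of a finite homotopy string in Section 3.2, and the closed-curve case gives $t(w)=s(w)$ as needed for a homotopy band.

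Then I would check well-definedness on homotopy classes together with compatibility with the involution $s$ and the rotation $r$. Two curves in minimal position that are freely homotopic (with endpoints fixed, resp.\ as unoriented closed curves) differ by an ambient isotopy preserving $\Delta$, and any such isotopy changes the readout $u$ only by reversing it (a string being unoriented) or, for a closed curve, by a cyclic rotation together with the choice of a starting crossing and an orientation. Hence $\varrho$ and $\varrho_b$ descend to maps $St(S,M)\to\overline{GSt(A)}$ and $Ba(S,M)\to\overline{GBa(A)}$. Finally one shows these are two-sided inverses of $\kappa$ and $\kappa_b$: by construction $\kappa(\varrho(\delta))=\delta$, and conversely, for a reduced homotopy string $w$, the curve $\tilde\kappa(w)$ obtained by concatenating the polygon arcs is already in minimal position — reducedness makes every crossing transverse and essential and rules out any removable bigon — so reading it back returns $w$; injectivity and surjectivity of $\kappa$, $\kappa_b$ follow.

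The main obstacle is the local dictionary of the second step: establishing, by a careful case analysis of the polygon gluings, that "turning" versus "going straight" as a curve crosses an arc of $\Delta$ corresponds exactly to the presence versus absence of a length-two relation at the junction, and dually that being in minimal position is equivalent to reducedness (equivalently, that $\tilde\kappa$ of a reduced word never creates a contractible bigon). Once this correspondence is pinned down, existence of the walk, the identification with the quotients by $s$ and $r$, and mutual inverseness are all formal; this is precisely the content of \cite[Lemma 2.15, Lemma 2.16]{OPS}, to which the argument can in the end be referred.
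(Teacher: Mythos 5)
The paper offers no proof of this statement; it simply cites \cite[Lemma~2.15, Lemma~2.16]{OPS}. Your proposal is a faithful outline of the argument in that reference: you correctly identify that the inverse is built by reading the sequence of crossings of a minimal-position representative with the dissection, that one letter of the walk is produced per traversal of a polygon (using the observation from Section~2.1 that a polygon arc joining two red edges encodes a subpath of the maximal path), that reducedness corresponds to the absence of innermost bigons, and that the homotopy-string alternation/relation condition is the local dictionary at the glued edges; and you correctly reduce well-definedness on $\overline{GSt(A)}$ and $\overline{GBa(A)}$ to the quotient by $s$ and $r$. Two cosmetic remarks: with intersection points $q_0,\dots,q_m$ there are $m$ interior segments, so the walk should be $u_1\cdots u_m$ rather than $u_1\cdots u_{m-1}$; and "ambient isotopy preserving $\Delta$" is slightly too strong a phrasing for comparing two homotopic minimal-position representatives (one also uses bigon moves, which you already have in hand from the reducedness discussion). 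Neither affects the substance; the approach is the same as the cited source.
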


	\subsection{Full description of indecomposable objects of $\underbar{CM}(A^\ltimes)$}
	
	Let $\Mcat$ be the set of maximal paths on the quiver of $A$ i.e. paths that are not subpaths of a path of strictly greater length. For $w \in \Mcat$, we consider the following set :
	
	\[\Ycat_w := \{u | u \; \textup{subpath} \; \textup{of} \; w , s(u) = s(w)\}\]
	
	\noindent For a path $w$, we define $l(w)$ to be its length. We define an order on $\Ycat_w$ by
	
	\[\forall u,v \in \Ycat_w, \; u \leq_w v \iff l(u) \leq l(v)\]
	
	\noindent This is a total order on $\Ycat_w$ and $w$ is the maximal element. 
	
	\noindent Let us fix a total order $\leq_\Mcat$ on $\Mcat$, we can now consider the set :
	\[\Ycat = \bigcup_{w \in \Mcat} \Ycat_w\]
	We define an order on $\Ycat$ by 
	\[\forall u \in \Ycat_w, \; \forall v \in \Ycat_{w'}, \; u \leq v \iff w <_\Mcat w' \,\textup{or}\, (w=w' \,\textup{and}\, u \leq_w v) \]
	
	The following lemma enables us to define an involution on $\Ycat$.
	
	\begin{lemma}[\cite{BM}]
		Let $A$ be a gentle algebra with quiver $(Q,I)$. For every $i \in Q_0$, there are at most two paths in $\Ycat$ with target $i$.
	\end{lemma}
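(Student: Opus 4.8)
The plan is to describe the set $\Ycat^{(i)}:=\{u\in\Ycat\mid t(u)=i\}$ explicitly and read off the bound $|\Ycat^{(i)}|\le 2$ directly from the gentle axioms. First I would record a slight strengthening of the remark following the definition of a gentle algebra: each arrow $\alpha$ occurs \emph{exactly once} in its (unique) maximal path $w_\alpha$. Indeed, the gentle condition says each arrow $\beta$ has at most one ``successor'' inside a nonzero path (the unique $\gamma$ with $\beta\gamma\notin I$), so if $\alpha$ occurred twice in $w_\alpha$ the forward continuations from the two occurrences would agree arrow by arrow and never reach a dead end (if one of them did, the other occurrence could be extended, contradicting maximality); hence $w_\alpha$ would be an infinite sequence of arrows with every consecutive pair outside $I$, which is impossible since $\dim_k A<\infty$.

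Next I would split $\Ycat^{(i)}$ into its nontrivial elements and the trivial path $e_i$. For a nontrivial $u\in\Ycat$, any arrow occurring in $u$ pins down the unique maximal path $w$ with $u\in\Ycat_w$, and inside $w$ the path $u$ is the prefix ending at the (by the preliminary observation, unique) occurrence of its final arrow $\alpha$; thus $u$ is completely determined by $\alpha$. Hence $u\mapsto\alpha$ embeds the set of nontrivial elements of $\Ycat^{(i)}$ into $\{\alpha\in Q_1\mid t(\alpha)=i\}$, which has at most two elements by the gentle axiom bounding the number of arrows with a given target. In particular this already settles the case $e_i\notin\Ycat$.

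It remains to handle the case $e_i\in\Ycat$, i.e.\ the case in which some maximal path $w$ has source $i$; let $\alpha_0$ be its first arrow. Maximality of $w$ at its source means that no arrow $\gamma$ with $t(\gamma)=i$ satisfies $\gamma\alpha_0\notin I$ — equivalently, \emph{every} arrow $\gamma$ with $t(\gamma)=i$ satisfies $\gamma\alpha_0\in I$. But the gentle condition applied to $\alpha_0$ permits at most one arrow $\gamma$ with $t(\gamma)=i$ and $\gamma\alpha_0\in I$; hence $i$ has at most one arrow of target $i$, so by the previous paragraph $\Ycat^{(i)}$ contains at most one nontrivial element and $|\Ycat^{(i)}|\le 2$. (If one prefers to read $\Ycat$ as the disjoint union $\bigsqcup_w\Ycat_w$, so that $e_i$ is counted once for each maximal path with source $i$: two distinct maximal paths $w\neq w'$ with source $i$ would have distinct first arrows $\alpha_1,\alpha_2$ of source $i$, and the same argument shows every arrow $\gamma$ with $t(\gamma)=i$ satisfies both $\gamma\alpha_1\in I$ and $\gamma\alpha_2\in I$, which the gentle condition for $\gamma$ forbids; so $i$ has no arrow of target $i$ and $|\Ycat^{(i)}|\le 2$ still holds.)

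I do not expect a genuine obstacle — this is elementary bookkeeping with the gentle axioms — but the point that must be handled with care is the interaction between $e_i$ and the arrows with target $i$: the crude bound ``at most two arrows at $i$'' is not enough, and one must notice that the presence of $e_i$ in $\Ycat$ already \emph{forces} $i$ to have at most one incoming arrow, via the ``$\gamma\alpha_0\in I$'' half of the gentle condition. The preliminary ``occurs exactly once'' observation is what makes the count of nontrivial elements of $\Ycat^{(i)}$ clean, so I would be sure to include it.
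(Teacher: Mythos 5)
The paper does not prove this lemma --- it is cited from \cite{BM} --- so there is no in-paper argument to compare against, and your self-contained proof fills in exactly what the citation leaves implicit. Your argument is correct, and the two points you single out are where the content lies. The preliminary observation that each arrow occurs \emph{exactly once} in its maximal path (from unique successors plus $\dim_k A<\infty$) is what makes ``last arrow of $u$'' a well-defined injection from the nontrivial elements of $\Ycat$ with target $i$ into $\{\alpha\in Q_1 \mid t(\alpha)=i\}$; without it the map is not obviously injective. And the role of $e_i$ is indeed subtler than ``it just uses up one of two slots'': as you observe, maximality of a path $w$ with $s(w)=i$ at its source forces every incoming arrow $\gamma$ to satisfy $\gamma\alpha_0\in I$, and the gentle axiom then caps the number of such $\gamma$ at one, so the presence of $e_i$ in $\Ycat$ actively forbids a second nontrivial element. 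One remark on your parenthetical: reading $\Ycat$ as the disjoint union of the $\Ycat_w$ is in fact the reading the paper needs, since the total order on $\Ycat$ is defined by first comparing the indexing maximal paths and would be ill-defined on an $e_i$ lying in two different $\Ycat_w$'s; so the case you handle there (two maximal paths with common source $i$, whence no arrow has target $i$) is not optional extra care but part of the statement.
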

	
	We now define an involution $\sigma$ on $\Ycat$ as follows :
	
	\[\forall u \in \Ycat, \; \sigma(u) = \begin{cases}
		v \; \textup{if there exists a path $v$ in $\Ycat \backslash \{u\}$ such that $t(v) = t(u)$} \\
		u \; \textup{else}
	\end{cases} \]
	
	The aim of this subsection is to construct an additive functor $G : \CMrad(A^\ltimes) \to \Scat(\Ycat,\sigma,k)$. 
	
	We first define $G$ on objects. Let $M = (P,\varphi)$ be an object of $\CMrad(A)$ and $P = \bigoplus_{i \in Q_0} P_i^{d_i}$ be its decomposition into indecomposable projective $A$-modules. 
	
	We define $\Pcat$ to be the set of paths on the quiver of $A$. We also define $\Pcat_0$ to be the set of trivial paths on the quiver of $A$. If $w : i \to j$ is a path in $\Pcat$, we denote $\varphi_w : P_i \to P_j$ the associated morphism in $\mod \, A$.
	
	We have a decomposition $\varphi = \displaystyle\sum_{w \in \Pcat} \varphi_w A_w$ where $A_w \in \Mcat_{d_j,d_i}(k)$ is the matrix representing the multiplicities of $\varphi_w$.
	
	Because $M$ is in $\CMrad(A^\ltimes)$, if $w$ is a trivial path then $A_w = 0$. We can therefore write 
	\[\varphi = \displaystyle\sum_{w \in \Pcat \backslash \Pcat_0} \varphi_w A_w\]
	
	For every non-trivial path $\alpha$ of $A$, there is a unique maximal path $w$ such that $w = u\alpha \tilde{w}$. We can thus define the block matrix $G(M) := \left(G(M)_u^v\right)_{u,v \in \Ycat}$ as follows :
	
	\[\forall u,v \in \Ycat, \; G(M)_u^v = \begin{cases}
		A_\alpha^\mathsf{T} & \; \text{if} \; v = u \alpha \\
		0 & \; \text{otherwise}
	\end{cases} \in \Mcat_{d_{t(u)},d_{t(v)}}(k)\]
	
	\begin{lemma}
		If $M \in \CMrad(A^\ltimes)$ then $G(M)$ is a $(\Ycat,\sigma)$-matrix.
	\end{lemma}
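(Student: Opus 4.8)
The plan is to check the three defining properties of a $(\Ycat,\sigma)$-matrix from Definition 3.1.2 in turn; the first two are bookkeeping and only $G(M)^2=0$ carries content. Since $A$ is finite dimensional, $\Mcat$ is finite, hence so is $\Ycat=\bigcup_{w\in\Mcat}\Ycat_w$, and since $M$ is finite dimensional each multiplicity $d_i$ is finite; thus $G(M)$ has finitely many blocks, all of finite size. By construction $G(M)_u^v\in\Mcat_{d_{t(u)},d_{t(v)}}(k)$, so the horizontal band indexed by $u$ has $d_{t(u)}$ rows and the vertical band indexed by $u$ has $d_{t(u)}$ columns; hence the horizontal and vertical partitions of $G(M)$ coincide and $G(M)$ is a well-defined finite block matrix. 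For the $\sigma$-condition: by definition of $\sigma$, if $\sigma(u)=v$ with $u\neq v$ then $t(u)=t(v)$, so $d_{t(u)}=d_{t(v)}$ and the bands $G(M)_u$, $G(M)_v$ have the same number of rows (equivalently, $G(M)^u$, $G(M)^v$ the same number of columns); when $\sigma(u)=u$ the condition is vacuous.

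For $G(M)^2=0$ I would argue as follows. First recall that the maps $\varphi_w$ attached to distinct nonzero paths between two fixed vertices are linearly independent in the relevant $\Hom$-space of indecomposable projectives, so the decomposition $\varphi=\sum_{w\in\Pcat\setminus\Pcat_0}\varphi_w A_w$ determines the $A_w$ uniquely. Composing and grouping by the resulting path gives $\varphi^2=\sum_{\gamma}\varphi_\gamma\bigl(\sum_{\gamma=\alpha\beta}A_\beta A_\alpha\bigr)$, where $\gamma$ ranges over nonzero paths and the inner sum is over factorizations $\gamma=\alpha\beta$ into two nontrivial paths (terms with $\alpha\beta\in I$ drop out as $\varphi_{\alpha\beta}=0$). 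By uniqueness of such decompositions, $\varphi^2=0$ forces
\[ \sum_{\gamma=\alpha\beta}A_\beta A_\alpha=0 \qquad \text{for every nonzero path }\gamma. \]
On the other hand, fix $u,w\in\Ycat$. A summand $G(M)_u^v\,G(M)_v^w$ is nonzero only if $v=u\alpha$ and $w=v\beta$ for nontrivial paths $\alpha,\beta$, which forces $u$ to be a prefix of $w$, say $w=u\gamma$; conversely, for any factorization $\gamma=\alpha\beta$ into nontrivial paths the element $v:=u\alpha$ is a subpath of $w$, hence lies in the same $\Ycat_W$ as $w$, and $v$ recovers $(\alpha,\beta)$. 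Thus the indices $v$ contributing to $(G(M)^2)_u^w$ are in bijection with the factorizations of $\gamma$, and
\[ (G(M)^2)_u^w=\sum_{\gamma=\alpha\beta}A_\alpha^{\mathsf T}A_\beta^{\mathsf T}=\Bigl(\sum_{\gamma=\alpha\beta}A_\beta A_\alpha\Bigr)^{\mathsf T}=0, \]
while $(G(M)^2)_u^w=0$ trivially when $u$ is not a prefix of $w$. Hence $G(M)^2=0$ and $G(M)$ is a $(\Ycat,\sigma)$-matrix.

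The main obstacle is the bookkeeping in this last step. One must make precise that "$v$ strictly between $u$ and $w$ with both blocks potentially nonzero" is genuinely equivalent to "$\gamma=w/u$ is a nonzero path with factorization $\gamma=\alpha\beta$", which uses the gentle-algebra fact that every nonzero path lies in a unique maximal path (so that $u$, $v$, $w$ are forced into a common $\Ycat_W$); and one must track the transpose in the definition of $G$ together with the left/right composition convention so that the relation extracted from $\varphi^2=0$ is precisely the transpose of the block sum appearing in $(G(M)^2)_u^w$. Everything else is routine.
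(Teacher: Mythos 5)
Your proposal is correct and follows essentially the same route as the paper: verify compatibility of partitions and the $\sigma$-condition by bookkeeping, then derive $\sum_{\gamma=\alpha\beta}A_\beta A_\alpha=0$ from $\varphi^2=0$ and transpose to conclude $G(M)^2=0$. The extra details you supply (uniqueness of the $A_w$ via linear independence of the $\varphi_w$, and the explicit bijection between intermediate indices $v$ and factorizations of $\gamma$ through a common $\Ycat_W$) are left implicit in the paper but do not change the argument; note only that the absence of trivial-path terms in the decomposition, which you use, is exactly where the hypothesis $M\in\CMrad(A^\ltimes)$ is needed, and it is worth flagging that explicitly.
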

	
	\begin{proof}
		First, we show that the horizontal and vertical partitions of $G(M)$ are compatible. Let $u$ be in $\Ycat$, in the horizontal band $G(M)_u$, every matrix has $d_{t(u)}$ rows and and in the vertical band $G(M)^u$, every matrix has $d_{t(u)}$ columns. Thus horizontal and vertical partitions are compatible.
		\\
		Secondly, if $\sigma(u) = v$ then $u$ and $v$ have the same target. As the number of rows in $G(M)_u$ (resp. $G(M)_v$) is $d_{t(u)}$ (reps. $d_{t(v)}$), the second condition is also verified.
		\\
		Lastly, we need to show that $G(M)^2 = 0$. We have :
		\[0 = \varphi^2 = \sum_{w \in \Pcat} \sum_{\substack{\alpha,\beta \in \Pcat\\ \alpha\beta = w}} \varphi_\beta \circ  \varphi_\alpha A_\beta A_\alpha = \sum_{w \in \Pcat} \varphi_w \left(\sum_{\substack{\alpha,\beta \in \Pcat\\ \alpha\beta = w}} A_\beta A_\alpha\right)\]
		
		Therefore we get $\displaystyle\sum_{\substack{\alpha,\beta \in \Pcat\\ \alpha\beta = w}} A_\beta A_\alpha = 0$ for every $w$ in $\Pcat$.
		\\
		Let $u$ and $v$ be in $\Ycat$. We have 
		\[(G(M)^2)_u ^v = \sum_{x \in \Ycat} G(M)_u^x G(M)_x^v\]
		The term $G(M)_u^x G(M)_x^v$ is always zero except when $x = u\alpha$ and $v=x\beta$ where $\alpha$ and $\beta$ are paths. In this case, we get $v = uw$ and thus 
		\[(G(M)^2)_u ^v = \sum_{\substack{\alpha,\beta \in \Pcat\\ \alpha\beta = w}} A^\mathsf{T}_\alpha A^\mathsf{T}_\beta = (\sum_{\substack{\alpha,\beta \in \Pcat\\ \alpha\beta = w}} A_\beta A_\alpha)^\mathsf{T} = 0\]
	\end{proof}
	
	We now define $G$ on morphisms. Let $M = (P,\varphi)$ and $N = (Q,\varphi')$ be in $\CMrad(A^\ltimes)$. A morphism $\psi :M \to N$ is the data of a morphism $\psi$ in $\Hom_A(P,Q)$ such that $\psi \circ \varphi = \varphi' \circ \psi$. 
	\\
	Let $P = \bigoplus_{i \in Q(A)_0} P_i^{d_i}$ and $Q = \bigoplus_{i \in Q(A)_0} P_i^{d'_i}$ be the decompositions of $P$ and $Q$ into indecomposable modules. A morphism $\psi \in \Hom_A(P,Q)$ decomposes as follows :
	\[\psi = \sum_{w \in \Pcat(A)} \varphi_w A_w\]
	where $A_w \in \Mcat_{d'_j,d_i}(k)$ if $w : i \to j$.
	\\
	We define the morphism $G(\psi) : G(M) \to G(N)$ as follows :
	\[\forall u,v \in \Ycat, \; G(\psi)_u^v = \begin{cases}
		A_\alpha^\mathsf{T} &\; \text{if} \; v = u\alpha \\
		0 & \; \text{otherwise}
	\end{cases} \in \Mcat_{d_i,d'_j}(k)\]
	
	\begin{lemma}
		If $\psi : M \to N$ then $G(\psi)$ is in $\Hom_{\Scat(\Ycat,\sigma,k)}(G(M),G(N))$.
	\end{lemma}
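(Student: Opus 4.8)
The plan is to verify, one at a time, the four conditions defining a morphism in $\Scat(\Ycat,\sigma,k)$ for the block matrix $T:=G(\psi)$, regarded as a candidate morphism $G(M)\to G(N)$, where $M=(P,\varphi)$ and $N=(Q,\varphi')$ are objects of $\CMrad(A^\ltimes)$ with $P=\bigoplus_i P_i^{d_i}$, $Q=\bigoplus_i P_i^{d'_i}$, and $\psi=\sum_{w\in\Pcat(A)}\varphi_w A_w$, $\varphi=\sum_w\varphi_w A'_w$, $\varphi'=\sum_w\varphi_w A''_w$ are the decompositions used in the construction of $G$. The first thing I would record is the shape of the blocks of $T$: if $u,v\in\Ycat$ satisfy $v=u\alpha$ for a path $\alpha$, then $\alpha$ is a subpath of the unique maximal path containing $v$ (hence nonzero in $A$) and this factorisation of $v$ is unique, so $G(\psi)_u^v=A_\alpha^{\mathsf T}\in\Mcat_{d_{t(u)},d'_{t(v)}}(k)$ is well defined; all remaining blocks of $T$ vanish.

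Conditions 1 and 3 are then immediate. Every block in the horizontal band $T_u$ has $d_{t(u)}$ rows and every block in the vertical band $T^v$ has $d'_{t(v)}$ columns; these agree respectively with the number of columns of the vertical band $G(M)^u$ and the number of rows of the horizontal band $G(N)_v$, which is exactly the required compatibility of the horizontal partition of $T$ with the vertical partition of $G(M)$ and of the vertical partition of $T$ with the horizontal partition of $G(N)$. For upper-triangularity, $G(\psi)_u^v\neq 0$ forces $v=u\alpha$; if $\alpha$ is trivial then $v=u$, and if $\alpha$ is non-trivial then $u$ is a strict prefix of $v$ inside a common maximal path, hence $u<v$ in the order on $\Ycat$. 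So $T_u^v=0$ whenever $v<u$.

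For the $\sigma$-compatibility condition, note that $G(\psi)_u^u$ equals $A_{e_{t(u)}}^{\mathsf T}$, the transpose of the multiplicity matrix of the identity component $\id_{P_{t(u)}}$ of $\psi$, so it depends only on $t(u)$. Since $\sigma(u)=v$ forces $t(u)=t(v)$ by the definition of $\sigma$, we obtain $G(\psi)_u^u=A_{e_{t(u)}}^{\mathsf T}=A_{e_{t(v)}}^{\mathsf T}=G(\psi)_v^v$, which is the fourth condition.

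The only condition requiring a computation is $TC=BT$, i.e. $G(M)\,G(\psi)=G(\psi)\,G(N)$, and here I would mirror the argument used to prove $G(M)^2=0$ in the preceding lemma. Expanding the relation $\psi\circ\varphi=\varphi'\circ\psi$ along paths and collecting the $w$-components gives, for every path $w$,
\[\sum_{\substack{\alpha,\beta\in\Pcat(A)\\ \alpha\beta=w}}A_\beta A'_\alpha\;=\;\sum_{\substack{\alpha,\beta\in\Pcat(A)\\ \alpha\beta=w}}A''_\beta A_\alpha.\]
Now fix $u,v\in\Ycat$. If $v$ is not of the form $uw$ for any path $w$, then $(G(M)G(\psi))_u^v$ and $(G(\psi)G(N))_u^v$ vanish term by term. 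Otherwise write $v=uw$; the nonzero contributions to each block product are indexed by the factorisations $w=\alpha\beta$ (each producing the intermediate index $u\alpha\in\Ycat$), and transposing block products exactly as in the preceding lemma yields
\[(G(M)G(\psi))_u^v=\Bigl(\sum_{\alpha\beta=w}A_\beta A'_\alpha\Bigr)^{\!\mathsf T},\qquad (G(\psi)G(N))_u^v=\Bigl(\sum_{\alpha\beta=w}A''_\beta A_\alpha\Bigr)^{\!\mathsf T},\]
which are equal by the displayed identity. I expect the main obstacle to be nothing more than careful bookkeeping — keeping consistent the transposes introduced by $G$, the left-to-right convention for concatenating paths, and the order of block multiplication, plus the (immediate) degenerate cases where $w$ is trivial or $u$ fails to be a left divisor of $v$. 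Once these match up, all four conditions hold and $G(\psi)\in\Hom_{\Scat(\Ycat,\sigma,k)}(G(M),G(N))$.
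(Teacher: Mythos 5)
Your proof is correct and follows essentially the same path as the paper's: check the partition compatibilities from the shapes of the blocks, read off upper‑triangularity and the $\sigma$‑diagonal condition directly from the definition of $G(\psi)$, and derive the intertwining relation $G(M)G(\psi)=G(\psi)G(N)$ by expanding $\psi\circ\varphi=\varphi'\circ\psi$ into path components and transposing block products, exactly as in the $G(M)^2=0$ computation. If anything you are a bit more careful than the paper in recording which block has $d_{t(u)}$ versus $d'_{t(u)}$ rows and in making explicit that $G(\psi)_u^u$ is the transposed multiplicity matrix of the trivial path $e_{t(u)}$, but the underlying argument is the same.
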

	
	\begin{proof}
		Let $u$ be in $\Ycat$, the blocks in $G(\psi)_u$ have $d'_{t(u)}$ rows and blocks in $G(\psi)^u$ have $d_{t(u)}$ columns. Because blocks in $G(M)^u$ have $d_{t(u)}$ columns and blocks in $G(N)_u$ have $d'_{t(u)}$ rows, the horizontal (resp. vertical ) partition of $G(\psi)$ is compatible with the vertical partition of $G(M)$ (resp. the horizontal partition of $G(N)$).
		\\
		If $M = (P,\varphi)$, $N = (Q,\varphi')$ and $\psi : M \to N$ we have decompositions 
		
		\begin{multicols}{3}
			\noindent 
			\begin{equation*}
				\varphi = \sum_{w \in \Pcat} \varphi_w M_w
			\end{equation*}
			\noindent
			\begin{equation*}
				\varphi' = \sum_{w \in \Pcat} \varphi_w N_w
			\end{equation*}
			\noindent
			\begin{equation*}
				\psi = \sum_{w \in \Pcat} \varphi_w A_w
			\end{equation*}
		\end{multicols} 
		
		The equality $\psi \circ \varphi_M = \varphi_N \circ \psi$ implies that for all $w \in \Pcat$ 
		\[ \sum_{w = w_1 w_2} A_{w_2}M_{w_1} = \sum_{w = w_1 w_2} N_{w_2} A_{w_1} \]
		Furthermore, if $u,v \in \Ycat$, such that $v = u \alpha$ we have 
		\[(G(M)G(\psi))_u^v = \sum_{w \in \Ycat} G(M)_u^w G(\psi)_w^v = \sum_{\alpha = \beta \delta} {}^t M_\beta {}^t A_\delta\]
		and 
		\[(G(\psi)G(N))_u^v = \sum_{\alpha = \beta \delta} {}^t A_\beta {}^t N_\delta\]
		Thus we have $G(M)G(\psi) = G(\psi)G(N)$.
		\\
		Next, $G(\psi)$ is upper triangular because $G(\psi)_u^v$ is non-zero implies that $u$ is a subpath of $v$. Given the ordering on $\Ycat$, this implies that $u \leq v$.
		\\
		Finally, if $\sigma(u) = v$ then we have $G(\psi)_u^u = G(\psi)_v^v$ by definition of $G(\psi)$.
		
	\end{proof}
	
	\begin{remark}
		Using the same method, it is straightforward to see that $G$ defines an additive functor.
	\end{remark}
	
	\begin{proposition}
		Let $(\Ycat,\sigma)$ be the linearly ordered set with involution as defined above. Then we have the following :
		\begin{itemize}
			\item[(a)] $\forall M \in \CMrad(A^\ltimes), \; G(M) = 0 \iff M \in \proj \, A^\ltimes$.
			\item[(b)] If $\mathcal{U}$ is the full subcategory of $\Scat(\Ycat,\sigma,k)$ consisting of objects in the image of $G$ then
			\[X \simeq Y \; \textup{in} \; {\mathcal{U}} \iff X \simeq Y \; \textup{in} \; \Im(G)\]
		\end{itemize}
	\end{proposition}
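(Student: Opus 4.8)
The proposition splits into the easy bookkeeping statement (a) and the substantive statement (b); here is how I would attack each.

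For (a) the plan is simply to unwind the definition of $G$ on objects. Writing $\varphi_M=\sum_{w}\varphi_w A_w$, the hypothesis $M\in\CMrad(A^\ltimes)$ already forces the trivial-path blocks $A_{e_i}$ to vanish, and for each non-trivial path $\alpha$ the transpose $A_\alpha^{\mathsf T}$ occurs as the single block $G(M)_u^{u\alpha}$, where $u$ is the prefix of the maximal path through $\alpha$. Hence $G(M)$ is the zero matrix if and only if every $A_\alpha$ vanishes, i.e.\ if and only if $\varphi_M=0$, i.e.\ if and only if $M$ has zero differential; I would then finish by appealing to the decomposition of a maximal Cohen-Macaulay module into its radical and projective parts, together with the description of the projective objects of $\Pcat(A)$, to identify the zero-differential objects of $\CMrad(A^\ltimes)$ with $\proj A^\ltimes$.

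In (b), the implication "$X\simeq Y$ in $\Im(G)$ $\Rightarrow$ $X\simeq Y$ in $\mathcal U$" is trivial, since morphisms of $\Im(G)$ are in particular morphisms of $\mathcal U$. For the converse, suppose $T\colon G(M)\to G(N)$ is an isomorphism in $\Scat(\Ycat,\sigma,k)$; it is enough to produce an isomorphism $\psi\colon M\to N$ in $\CMrad(A^\ltimes)$ and take $G(\psi)$. The first move is to exploit the diagonal of $T$: being block upper-triangular and invertible it has invertible diagonal blocks, and the constraint $T^u_u=T^v_v$ whenever $\sigma(u)=v$, combined with Lemma~3.3.1 (at most two elements of $\Ycat$ with a given target), shows that $T^u_u$ depends only on the vertex $t(u)$. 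These invertible matrices assemble, via the multiplicity spaces, into an $A$-linear automorphism which — by functoriality of $G$ — identifies $M$ with an object $M_1$ of $\CMrad(A^\ltimes)$ whose image is the conjugate of $G(M)$ by the block-diagonal part of $T$; replacing $M$ by $M_1$, I may assume $T$ is block-unipotent, so that $M$ and $N$ share an underlying projective $A$-module $P$. The second move is to read off the path-position blocks of $T$, producing $\psi=\id_P+\sum_\alpha\varphi_\alpha B_\alpha$ with $B_\alpha^{\mathsf T}=T^{u\alpha}_u$; the block computation in the proof of Lemma~3.3.3 shows that $\psi$ is a morphism of $A^\ltimes$-modules exactly when $G(M)G(\psi)=G(\psi)G(N)$, i.e.\ exactly when the "path-position part" of the relation $G(M)T=TG(N)$ holds by itself. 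Since $\psi$ has trivial-path part $\id_P$, once it is a morphism it is automatically an $A$-linear, hence $A^\ltimes$-linear, automorphism, and $G(\psi)$ is the sought isomorphism in $\Im(G)$.

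\emph{The main obstacle} is precisely that $G$ is not full: $T$ may have non-zero blocks $T^u_v$ with $v$ not a path-extension of $u$, which carry no module-theoretic meaning, so the relation $G(M)T=TG(N)$ is a priori strictly stronger than its path-position part. Overcoming this is the core of the proof, and I would follow \cite{BM}: after the block-unipotent reduction, proceed by induction along the filtration of $\Ycat$ by path length, at each stage either absorbing a path-position block of $T$ into a (necessarily unipotent, hence invertible) automorphism lying in $\Im(G)$, or using the commutation relation to force a "long-range" non-path block to vanish, until $T$ has been replaced by an isomorphism in $\Im(G)$. An alternative route, which I would keep in reserve, is to first reduce to indecomposable $M$ and $N$ — legitimate because, by part (a), $G$ is additive and annihilates no non-zero object while both categories are Krull-Schmidt — and then to check that $G$ reflects isomorphisms between indecomposables by comparing endomorphism rings modulo their radicals (which $G$ matches via the trivial-path blocks, as in the proof of Lemma~3.3.3), the general case then following from Krull-Schmidt together with the Bondarenko-Drozd classification \cite{BD} on the target.
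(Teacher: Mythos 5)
Your treatment of part (a) matches the paper's: unwind the definition of $G$ on a radical Cohen–Macaulay module, observe that $G(M)=0$ forces all $A_\alpha$ (and hence $\varphi$) to vanish, and conclude. For part (b) your opening moves are in the right spirit — the forward implication is trivial, and the paper also builds the candidate inverse morphism out of the path-position blocks of $T$, precisely your $\psi$ — but your argument for why that candidate actually works is more elaborate than it needs to be, and one step of it is misguided.

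The paper's proof is a single observation. Define $H\colon\mathcal{U}\to\Im(G)$ to be the identity on objects and, on a morphism $T$, keep only the blocks $T_u^v$ with $v$ a path extension of $u$ (including $v=u$), setting the rest to zero; this is exactly $G(\psi)$ in your notation. The point you flag as ``the main obstacle'' — that the relation $G(M)T=TG(N)$ might be strictly stronger than its path-position part — is not actually an obstacle: the path-position blocks of the products $TG(N)$ and $G(M)T$ are computed entirely from path-position blocks of $T$. This follows from the order structure on $\Ycat$: it is a concatenation of intervals $\Ycat_\delta$ indexed by maximal paths, ordered within each interval by prefix length. If $v=u\alpha$, then $u$ and $v$ lie in the same $\Ycat_\delta$, so any $x$ with $u\leq x\leq v$ contributing a nonzero term in a product must lie in that same interval and hence be of the form $u\gamma$ for some (possibly trivial) path $\gamma$. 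Consequently $H(T)$ automatically satisfies the commutation relation, the same computation gives $H(T'\circ T)=H(T')\circ H(T)$, and $H$ is a functor that is the identity on objects. Any isomorphism in $\mathcal{U}$ is therefore sent to an isomorphism in $\Im(G)$ between the same two objects, which is exactly what (b) asserts.

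Two concrete problems with your proposed route. First, the inductive step that would ``use the commutation relation to force a long-range non-path block to vanish'' is both unnecessary and, in general, impossible: those blocks need not vanish, they simply never enter the computation of the path-position part, so the proof should leave them alone rather than try to kill them. Second, the preliminary reduction to a block-unipotent $T$ (building an $A$-linear automorphism out of the diagonal blocks and Lemma 3.3.1) is harmless but superfluous, because $H$ already retains the diagonal blocks and no normalisation is needed. Your Krull--Schmidt fallback via endomorphism rings modulo radicals is a plausible alternative in the spirit of \cite{BD}, but it would require establishing that $G$ matches those quotient rings, which the paper neither needs nor proves; the interval observation about $\Ycat$ short-circuits all of this.
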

	
	\begin{proof}
		This proof will follow very closely the proof of an analog result in \cite{BM}.
		\begin{itemize}
			\item[(a)] Let $M = (P,\varphi) \in \CMrad(A^\ltimes)$ such that $G(M) = 0$. Then if $\varphi = \displaystyle\sum_{w \in \Pcat} \varphi_w A_w$, we have $A_w = 0$ for all $w \in \Pcat$. Thus $\varphi = 0$ and $M \in \proj A$.
			\item[(b)] The implication from right to left is obvious. To prove the implication from right to left, we construct a functor $H : \Ucat \to \Im(G)$ that is the identity on the objects and such that if $T$ is a morphism in $\Ucat$ : 
			\[\forall u,v \in \Ycat, H(T)_u^v = \begin{cases}
				T_u^v & \; \textup{if} \; v = u \alpha \\
				0 & \; \textup{otherwise}
			\end{cases}\]  
			
			By construction, $H(\varphi)$ is always a morphism of $\Im(G)$. We only need to show that it respects composition of morphisms. Let $T$ and $T'$ be two morphisms of $\Ucat$ such that $T' \circ T$ is well defined. Let $u$ and $v$ be in $\Ycat$. We have
			\[(H(T' \circ T))_u^v =\begin{cases}
				\displaystyle\sum_{x \in \Ycat} (T')_u^x T_x^v & \; \textup{if} \; v = uw \\
				0 & \; \textup{otherwise}
			\end{cases} \]
			
			When $v = uw$, for the term $(T')_u^x T_x^v$ to be non-zero, it is necessary that $u \leq x$ and $x \leq v$. Because $u$ and $v$ have the same source, there exists a maximal path $\delta$ such that $u$ and $v$ are in $\Ycat_\delta$. Thus by definition of the order on $\Ycat$, we have that $x$ is in $\Ycat_\delta$. Therefore there exists two paths $\alpha$ and $\beta$ such that $x = u \alpha$ and $v = x \beta$. This means that $w = \alpha \beta$ and we have
			
			\[(H(T' \circ T))_u^v  = \begin{cases}
				\displaystyle\sum_{\substack{\alpha,\beta \in \Pcat(A)\\ \alpha\beta = w}} (T')_u^{u\alpha} T_{u\alpha}^v & \; \textup{if} \; v = uw \\
				0 & \; \textup{otherwise}
			\end{cases}\]
			
			However, we have :
			\[(H(T') \circ H(T))_u^v = \begin{cases}
				\displaystyle\sum_{x \in \Ycat} H(T')_u^x H(T)_x^v & \; \textup{if} \; v = uw \\
				0 & \; \textup{otherwise}
			\end{cases}\]
			
			For the term $H(T')_u^x H(T)_x^v$ to be non-zero, it is necessary that $x = u\alpha$ and $y = x\beta$. In this case, we have $H(T')_u^x = (T')_u^x$ and $H(T)_x^v = T_x^v$. We also have 
			\[(H(T') \circ H(T))_u^v = \begin{cases}
				\displaystyle\sum_{\substack{\alpha,\beta \in \Pcat(A)\\ \alpha\beta = w}} (T')_u^{u\alpha} T_{u\alpha}^v & \; \textup{if} \; v = uw \\
				0 & \; \textup{otherwise}
			\end{cases}\]
			
			Thus we have $H(T' \circ T) = H(T') \circ H(T)$ and $H$ is a functor.
			
			Therefore it transforms an isomorphism between $X$ and $Y$ in $\Ucat$ to an isomorphism between $X$ and $Y$ in $\Im(G)$.
		\end{itemize}
	\end{proof}
	
	We now have all the tools to give and prove the main theorem of this section.
	
	\begin{theorem}
		The indecomposable objects of $\underbar{CM}(A^\ltimes)$ are exactly those of the form :
		\begin{itemize}
			\item $M_y$ where $y$ is a string of $(S,M,\Delta)$ ;
			\item $M_{(y,J)}$ where $y = y_1 \dots y_r$ is a band of $(S,M,\Delta)$ and $(k^n,J)$ is an indecomposable $k[x,x^{-1}]$-module.
		\end{itemize}
	\end{theorem}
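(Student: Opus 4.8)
The plan is to follow the strategy of \cite{BM} essentially word for word, transporting the classification of indecomposables of $\Dcat^b(A)$ to $\barCM(A^\ltimes)$ through the functor $G$ and the matrix problem $\Scat(\Ycat_A,\sigma_A,k)$. First I would reduce the statement to $\CMrad(A^\ltimes)$. By the decomposition of a maximal Cohen--Macaulay module into a projective part and a ``radical'' part (\S1.2), the isomorphism classes of indecomposable objects of $\barCM(A^\ltimes)$ are in bijection with those of the non-projective indecomposable objects of $\CMrad(A^\ltimes)$; moreover no nonzero object of $\CMrad(A^\ltimes)$ has a projective summand, and the representatives in $\CMrad(A^\ltimes)$ of the objects $M_y$ and $M_{(y,J)}$ of Section 2 do lie in $\CMrad(A^\ltimes)$, since for a string or band in minimal position each arc $\gamma_i$ is a nontrivial subpath of a maximal path and hence induces a radical morphism between the corresponding indecomposable projectives. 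So it suffices to prove that the indecomposable objects of $\CMrad(A^\ltimes)$ are exactly the $M_y$, $y$ a string of $(S,M,\Delta)$, and the $M_{(y,J)}$, $y$ a band (with a fixed decomposition) and $(k^n,J)$ an indecomposable $k[x,x^{-1}]$-module.

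Next I would check that each of these objects is indecomposable. Given a string $y$, let $\widehat y$ be the homotopy string with $\kappa(\widehat y)=y$ (bijectivity of $\kappa$, \S3.2). Commutativity of the square relating $\gamma$ and $G$ (Lemma 3.3.8), together with the identification of $M_y$ with $F$ of the corresponding string complex, gives $G(M_y)\simeq B_{\gamma(\widehat y)}$, which is indecomposable by the theorem of Bondarenko and Drozd (\cite{BD}, recalled in \S3.1). If $M_y\simeq M'\oplus M''$ in $\CMrad(A^\ltimes)$, then $G(M')\oplus G(M'')\simeq G(M_y)$ is indecomposable, so say $G(M'')=0$, hence $M''$ is projective by Proposition 3.3.5(a), hence $M''=0$. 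Thus $M_y$ is indecomposable; the same argument with $\gamma_b$ and $B_{\gamma_b(\widehat y),J}$ shows each $M_{(y,J)}$ is indecomposable, these objects being well defined up to isomorphism once a decomposition is fixed (\S2.3).

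For exhaustivity, let $M$ be an indecomposable non-projective object of $\CMrad(A^\ltimes)$. Then $G(M)\neq 0$ by Proposition 3.3.5(a). The key point is that $G(M)$ is again indecomposable: this follows from Proposition 3.3.5 together with Krull--Schmidt, since the endomorphism ring of $G(M)$ computed in $\Im(G)$ is a nonzero quotient of the local endomorphism ring of $M$, hence local, and the retraction $H\colon\Ucat\to\Im(G)$ from the proof of Proposition 3.3.5(b) allows one to descend any idempotent decomposition of $G(M)$ in $\Scat(\Ycat_A,\sigma_A,k)$ to one in $\Im(G)$, forcing it to be trivial. By the Bondarenko--Drozd classification, either $G(M)\simeq B_w$ for a unique $\Ycat_A$-string $w$, or $G(M)\simeq B_{w,J}$ for a unique $\Ycat_A$-band $w$ and indecomposable $(k^n,J)$. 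In the first case $B_w$ lies in $\Im(G)$, so by Lemma 3.3.9 there is a homotopy string $\widehat y$ with $\gamma(\widehat y)=w$; set $y=\kappa(\widehat y)$. Then $G(M)\simeq B_w\simeq G(M_y)$, and since $M$ and $M_y$ are non-projective indecomposables with isomorphic images under $G$, Proposition 3.3.5 (in the form: $G$ sends non-isomorphic non-projective indecomposables to non-isomorphic objects) gives $M\simeq M_y$. The band case is identical, with $\gamma_b$, $\kappa_b$ and $B_{w,J}$ in place of $\gamma$, $\kappa$ and $B_w$, and yields $M\simeq M_{(y,J)}$.

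I expect the main obstacle to be the precise determination of the image of $G$, i.e.\ Lemma 3.3.9 (that the $\Ycat_A$-strings, resp.\ $\Ycat_A$-bands, arising as $G(M)$ for $M$ in $\CMrad(A^\ltimes)$ are exactly those in the image of $\gamma$, resp.\ $\gamma_b$) and the commutativity of the squares of Lemma 3.3.8: both rest on a careful dictionary between reduced homotopy walks on the quiver of $A$ and paths on $Q(\Ycat_A)$, using gentleness of $A$ (each arrow in a unique maximal path, the shape of the sets $\Ycat_w$). A secondary technical point is the transfer of indecomposability and of isomorphisms across $G$ used above, which is handled exactly as in \cite{BM} through the auxiliary full subcategory $\Ucat$, the retraction $H$, and the local endomorphism rings of indecomposables in the Krull--Schmidt categories $\CMrad(A^\ltimes)$ and $\Scat(\Ycat_A,\sigma_A,k)$.
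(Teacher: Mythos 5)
Your proof is essentially the paper's own argument: reduce to $\CMrad(A^\ltimes)$, push through the functor $G$ to the Bondarenko--Drozd matrix problem $\Scat(\Ycat_A,\sigma_A,k)$, and use Lemmas~3.3.8--3.3.9 together with Proposition~3.3.5 and the bijections $\kappa,\kappa_b$ to pull the classification of $\Ycat_A$-strings and $\Ycat_A$-bands back to the objects $M_y$ and $M_{(y,J)}$. You in fact supply details the paper leaves implicit, notably the indecomposability of each $M_y$ and $M_{(y,J)}$ and a sketch of why $G$ carries non-projective indecomposables to indecomposables. One small caution on the latter: since the retraction $H$ is not faithful, knowing $H(e)\in\{0,\id\}$ does not by itself force the idempotent $e\in\End_{\Scat}(G(M))$ to be trivial; you need the additional observation that $\ker H$ is a nilpotent two-sided ideal (every $T\in\ker H$ has vanishing diagonal blocks $T_u^u$, hence is strictly block upper triangular), whence an idempotent in $\ker H$ is $0$ and an idempotent congruent to $\id$ modulo $\ker H$ is $\id$. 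With that line added the argument closes, and it matches the treatment in \cite{BM} to which both you and the paper defer.
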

	
	This proof is organized in four steps and follows the one in \cite{BM}. The first step is the construction of two maps $\gamma$ and $\gamma_b$ that relate strings on the quiver of $A$ to strings on $Q(\Ycat)$ and the same for bands. The second gives a characterization of the images of $\gamma$ and $\gamma_b$. The last step is checking the compatibility between $G$ and these maps.
	
	\begin{definition}
		If $A = kQ/I$ is a gentle algebra and $w$ is a non trivial path of $(Q,I)$ then there exists a unique maximal path $\tilde{w}$ having $w$ as a subpath. We will use the notations $\tilde{w} = \hat{w}w\bar{w}$.
	\end{definition}
	
	\begin{lemma}
		The map $\gamma : GSt(A) \to St(\Ycat(A),\sigma)$ defined by : 
		\begin{itemize}
			\item if $w$ is a direct arrow of $A$ then $\gamma(w) = (\hat{w},\hat{w}w)$ ;
			\item if $w$ is an inverse arrow of $A$ then $\gamma(w) = (\hat{w^{-1}}w^{-1}, \hat{w^{-1}})$ ;
			\item if $w = w_1 \dots w_r$ is a homotopy string of $A$ then $\gamma(w) = \gamma(w_1)\dots \gamma(w_r)$ ;
		\end{itemize}
		is well defined, and induces an injective map $\overline{GSt(A)} \to \overline{St(\Ycat,\sigma)}$ which we will also denote $\gamma$.
		
		Furthermore, if we define $\gamma_b : GBa(A) \to Ba(\Ycat,\sigma)$ to be the restriction of $\gamma$ to $GBa(A)$ then this map is well defined and induces an injective map $\overline{GBa(A)} \to \overline{Ba(\Ycat,\sigma)}$ also denoted by $\gamma_b$
	\end{lemma}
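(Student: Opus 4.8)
The plan is to verify, in turn, that $\gamma(w)$ actually lies in $\textup{St}(\Ycat,\sigma)$, that $\gamma$ intertwines the inverse-path involutions so that it descends to $\overline{GSt(A)}$, that the descended map is injective, and that the same arguments restrict to homotopy bands. Throughout I read a reduced homotopy string $w$ as the concatenation $w_1\cdots w_r$ of its maximal direct and inverse sub-strings, so that $\gamma(w)=\gamma(w_1)\cdots\gamma(w_r)$, a direct sub-string $v$ being sent to the arrow $(\hat v,\hat v v)$ of $Q(\Ycat)$ and an inverse one $v^{-1}$ to its reverse $(\widehat{v^{-1}}v^{-1},\widehat{v^{-1}})=s\bigl((\hat v,\hat v v)\bigr)$. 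The first point is immediate: $\hat v$ and $\hat v v$ are both prefixes of the unique maximal path $\tilde v$ containing $v$, hence lie in $\Ycat_{\tilde v}\subseteq\Ycat$, so each $\gamma(w_i)$ is a genuine arrow of $Q(\Ycat)$.

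The heart of the argument is showing $\gamma(w)\in\textup{St}(\Ycat,\sigma)$: for each $i$ one must check that $\gamma(w_i)$ and $\gamma(w_{i+1})$ compose and that $p_2(\gamma(w_i))\neq p_1(\gamma(w_{i+1}))$. The key initial observation is that, whatever the types of $w_i$ and $w_{i+1}$, both $p_2(\gamma(w_i))$ and $p_1(\gamma(w_{i+1}))$ are elements of $\Ycat$ with target $t(w_i)=s(w_{i+1})$; by Lemma~3.3.1 (at most two elements of $\Ycat$ have a given target) and the construction of $\sigma$ (it swaps the two such elements when there are two, and fixes the single one otherwise), composability is automatic, so the content is exactly the strictness $p_2\neq p_1$, equivalently: these two prefix paths are distinct. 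I would establish this by a case analysis on the directions of $w_i$ and $w_{i+1}$. If they point the same way, maximality of the sub-strings forces $w_iw_{i+1}$, resp.\ $(w_iw_{i+1})^{-1}$, to lie in $I$; writing $\alpha$ for the last arrow of $w_i$ and $\beta$ for the first arrow of $w_{i+1}$, one gets $\alpha\beta\in I$, while the two prefix paths end, respectively, with $\alpha$ and with the arrow of the maximal path through $\beta$ that immediately precedes $\beta$ --- which composes with $\beta$ outside $I$; the gentleness axioms (at a vertex, at most one arrow composes with a given arrow inside $I$, and at most one outside) force these two arrows, hence the two prefix paths, to be different. If $w_i,w_{i+1}$ point opposite ways, the two prefix paths can coincide only when the underlying (cancellation-free) walk has a factor $c\,c^{-1}$ for some arrow $c$ at this junction, which is impossible. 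This case analysis is the main obstacle --- it is lengthy but elementary, and parallels the corresponding computation in \cite{BM}.

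For the descent and injectivity the essential facts are $\gamma(v^{-1})=s(\gamma(v))$ on each sub-string and $(w_1\cdots w_r)^{-1}=w_r^{-1}\cdots w_1^{-1}$, together with the fact that $s$ reverses concatenation of arrows; these give $\gamma(w^{-1})=s(\gamma(w))$, so $\gamma$ descends to a map $\overline{GSt(A)}\to\overline{\textup{St}(\Ycat,\sigma)}$. Injectivity follows from the locality of $\gamma$: an arrow $(a,b)$ occurring in $\gamma(w)$ records the two actual paths $a,b\in\Ycat$, one a prefix of the other, and the sub-string $w_i$ is recovered as their ``difference'', its direction being read off from which of $a,b$ is longer; hence $\gamma(w)$ determines the whole sequence $(w_i)_i$ and therefore $w$, and likewise after passing to $s$-orbits.

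Finally, for bands I would reduce to the string case applied to $w^2$ (or, to be safe about all junctions, $w^3$): if $w\in GBa(A)$ then $w^2$ is a reduced homotopy string, so the argument above yields $\gamma(w)^2\in\textup{St}(\Ycat,\sigma)$, which supplies both the ``wrap-around'' strictness $p_2(\gamma(w_r))\neq p_1(\gamma(w_1))$ and the closedness $s(\gamma(w_1))=t(\gamma(w_r))$ needed for $\gamma(w)\in\textup{Ba}(\Ycat,\sigma)$; primitivity is inherited, since each sub-string contributes exactly one arrow, so a decomposition $\gamma(w)=z^{k}$ pulls back through locality and injectivity to $w=v^{k}$, forcing $k=1$. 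As rotating the sub-strings of $w$ simply rotates the arrows of $\gamma(w)$, the map $\gamma_b$ is compatible with both $s$ and $r$ and descends to the asserted injective map $\overline{GBa(A)}\to\overline{\textup{Ba}(\Ycat,\sigma)}$.
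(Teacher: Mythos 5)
Your proposal is correct and follows essentially the same route as the paper: decompose the reduced homotopy string into its maximal direct/inverse homotopy letters, verify composability and strictness $p_2\neq p_1$ at each junction by a case analysis on the two types, then deduce descent to $\overline{GSt(A)}$ and injectivity from the fact that each arrow $(\hat{w_i},\hat{w_i}w_i)$ of $Q(\Ycat)$ recovers $w_i$ exactly. Your preliminary observation that composability is \emph{automatic} (both $p_2(\gamma(w_i))$ and $p_1(\gamma(w_{i+1}))$ are elements of $\Ycat$ with the same target in $Q_0$, and Lemma~3.3.1 together with the definition of $\sigma$ forces them to lie in one $\sigma$-orbit) is a nice way to isolate where the real work is, and is implicitly the same as the paper's target-matching computation.

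One minor imprecision in your same-direction case: you argue by comparing the last arrow $\alpha$ of $w_i$ with ``the arrow of the maximal path through $\beta$ that immediately precedes $\beta$,'' but such an arrow need not exist --- $\hat{w}_{i+1}$ can be trivial. You should split that case off: if $\hat{w}_{i+1}$ is trivial, it is a lazy path while $\hat{w}_i w_i$ is not, so strictness is immediate; if it is non-trivial, your gentleness argument applies. (For what it is worth, the paper's own treatment of that case goes the opposite way and asserts $\hat{w}_{i+1}=e_{s(w_{i+1})}$ always, which is not true in general --- e.g.\ for $Q:1\rightrightarrows 2\rightrightarrows 3$ with $ab=a'b'=0$ and the homotopy letters $w_i=a$, $w_{i+1}=b$, one has $\hat b=a'$ --- so your version, once patched, is actually the more careful one.) Your treatment of the opposite-direction case via the forbidden $cc^{-1}$ factor, of descent via $\gamma(v^{-1})=s(\gamma(v))$ and reversal of concatenation, of injectivity via the locality of the assignment $w_i\mapsto(\hat{w_i},\hat{w_i}w_i)$, and of bands via $w^2$ together with preservation of primitivity, all match the structure of the paper's argument.
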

	
	\begin{proof}
		We now need to show that $\gamma$ is well defined. Let $w = w_1 \dots w_n \in GSt(A)$ and $i \in \{1,\dots,n-1\}$. It is sufficient to show that $t(\gamma(w_i)) = s(\gamma(w_{i+1}))$ and $p_2(\gamma(w_i)) \neq p_1(\gamma(w_{i+1}))$. There are four possible cases :
		\begin{itemize}
			\item If $w_i,w_{i+1}$ are direct arrows. In this case, $w_iw_{i+1} \in I$, $t(\gamma(w_i)) = [\hat{w_i}w_i]_\sigma$ and $s(\gamma(w_{i+1})) = [\hat{w_{i+1}}]_\sigma$. To show that these two equivalence classes are equal, we need to show the representatives have the same target.
			
			We have $t(\hat{w_{i+1}}) = s(w_{i+1}) = t(w_i) = t(\hat{w_i}w_i)$ which is what we wanted.
			
			Furthermore, $p_1(\gamma(w_{i+1})) = \hat{w_{i+1}} = e_{s(w_{i+1})} \neq \hat{w_i} w_i = p_2(\gamma(w_i))$.
			\item If $w_i$ and $w_{i+1}$ are both inverse arrows, the reasoning is analog to the previous case.
			\item If $w_i$ is a direct arrow and $w_{i+1}$ is an inverse arrow, then $w_i$ and $w_{i+1}^{-1}$ have same target but do not share any arrows.
			
			In this case, $t(\gamma(w_i)) = [\hat{w_i}w_i]_\sigma$ and $s(\gamma(w_{i+1})) = [\hat{w_{i+1}^{-1}}w_{i+1}^{-1}]_\sigma$. However $t(\hat{w_i}w_i) = t(w_i) = t(w_{i+1}^{-1}) = t(\hat{w_{i+1}^{-1}}w_{i+1}^{-1})$ thus the equivalence classes under $\sigma$ are the same.
			
			Furthermore, $p_1(\gamma(w_{i+1})) = \hat{w_{i+1}^{-1}}w_{i+1}^{-1} \neq \hat{w_i}w_i = p_2(\gamma(w_i))$.
			\item If $w_i$ is an inverse arrows and $w_{i+1}$ is a direct arrow, then this case is analog to the previous one.
		\end{itemize}
		
		If $w = w_1 \dots w_n$ and $v = v_1 \dots v_m$ are elements of $GSt(A)$ such that $\gamma(v) = \gamma(w)$ then $n = m$ and for all $i \in \{1,\dots,n\}$, $\gamma(v_i) = \gamma(w_i)$. However, if this is true then $\hat{v_i} = \hat{w_i}$ and $\hat{v_i}v_i = \hat{w_i}w_i$ which means that $v_i = w_i$. Therefore $\gamma$ is injective. 
		
		\noindent The map $\gamma$ induces an injective map $\overline{GSt(A)} \to \overline{St(\Ycat(A),\sigma)}$ which we will also denote $\gamma$.
		
		\noindent We can show in an analog way that $\gamma_b$ is well defined and injective. As before, $\gamma_b$ induces a map $\gamma_b : \overline{GBa(A)} \to \overline{Ba(\Ycat,\sigma)}$ that is also injective.
	\end{proof}
	
	\begin{lemma}
		If $y$ is in $\overline{GSt(A)}$ then $G(M_{\kappa(y)}) \simeq B_{\gamma(y)}$ where $\kappa$ is the bijection of Proposition 2.2.4. .If $z$ is in $\overline{GBa(A)}$ and  $(k^n,J)$ is an indecomposable $k[x,x^{-1}]$-module, then $G(M_{\kappa_b(y),J}) \simeq B_{\gamma_b(z),f}$ where $\kappa_b$ is as in Proposition 2.2.4..
	\end{lemma}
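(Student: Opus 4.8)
The plan is to unravel both sides of the claimed isomorphism and to verify, by a direct computation following \cite{BM}, that applying the functor $G$ to a string (resp. band) object reproduces the Bondarenko matrix $B_{\gamma(y)}$ (resp. $B_{\gamma_b(z),J}$) of Section 3.1, up to a change of basis inside the blocks. Write the reduced homotopy string as $y=y_1\cdots y_r$ with each piece $y_i$ a direct or inverse string of $A$, let $x_0,\dots,x_r\in Q_0$ be the vertices visited by the walk (so $x_0=s(y_1)$ and $x_i=t(y_i)=s(y_{i+1})$), and for a direct string $u$ let $\tilde u=\hat u\,u\,\bar u$ be its unique maximal path (Definition 3.3.7). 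First I would put $M_{\kappa(y)}$ into an explicit form. Identifying surface strings with reduced homotopy strings through the bijection $\kappa$, $M_{\kappa(y)}$ is by definition the class in $\barCM(A^\ltimes)$ of the object $\bigl(\bigoplus_{i=0}^{r}P_{x_i},\ \varphi\bigr)$ with $\varphi=\sum_{i=1}^{r}\varphi_{y_i}$, where $\varphi_{y_i}\colon P_{x_{i-1}}\to P_{x_i}$ if the piece $y_i$ is direct and $\varphi_{y_i}\colon P_{x_i}\to P_{x_{i-1}}$ if it is inverse. Each $\varphi_{y_i}$ is left multiplication by a path of positive length, so $\Im(\varphi)$ is contained in the radical; hence this pair lies in $\CMrad(A^\ltimes)$, has no projective direct summand, and is therefore the $\CMrad$-representative of $M_{\kappa(y)}$, so the functor $G$ applies to it as it stands.

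Next I would compute $G(M_{\kappa(y)})$ from its recipe and compare it with $B_{\gamma(y)}$. Grouping the summands by vertex gives $\bigoplus_{i=0}^{r}P_{x_i}\simeq\bigoplus_{j\in Q_0}P_j^{d_j}$ with $d_j=\#\{i:x_i=j\}$, the $d_j$ copies of $P_j$ being labelled by the set $\{i:x_i=j\}$. In the decomposition $\varphi=\sum_u\varphi_u A_u$ the only non-zero matrices $A_u$ occur for $u$ one of the paths $y_i$ (or $y_i^{-1}$), and $A_{y_i}$ is the $\{0,1\}$-matrix with a single $1$ recording which copies of $P_{x_{i-1}}$ and $P_{x_i}$ the $i$-th piece joins; since the unique maximal path through a direct string $u$ is $\tilde u=\hat u\,u\,\bar u$, the non-zero blocks of $G(M_{\kappa(y)})$ are $G(M_{\kappa(y)})_{\hat u}^{\hat u u}=A_u^{\mathsf T}$. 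On the other side $\gamma(y)=\gamma(y_1)\cdots\gamma(y_r)$, each $\gamma(y_i)$ being the arrow of $Q(\Ycat)$ prescribed by Lemma 3.3.8 — namely $(\hat u,\hat u u)$ for a direct piece $u$ and $(\hat u u,\hat u)$ for an inverse piece $u^{-1}$ — and applying the target map $t\colon\Ycat/\sigma\to Q_0$ one checks that the class $c_i$ used in the definition of $B_{\gamma(y)}$ corresponds to the vertex $x_i$, so that $V_{\gamma(y)}([x]_\sigma)$ has basis labelled by the same set $\{i:x_i=t([x]_\sigma)\}$ as the copies of $P_{t([x]_\sigma)}$. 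Under this matching of labels, and after reconciling the opposite row/column conventions used for $B_{\gamma(y)}$ and for $G$, the defining rule for $V_{\gamma(y)}$ — send a basis vector to the next one along a direct letter, to the previous one along an inverse letter — produces exactly the blocks $A_u^{\mathsf T}$: a direct piece contributes via the index-raising clause and an inverse piece via the index-lowering clause, and the transposition built into the definition of $G$ is precisely what absorbs this direct/inverse asymmetry so that the entries match. This gives a block-diagonal isomorphism $G(M_{\kappa(y)})\simeq B_{\gamma(y)}$ in $\Scat(\Ycat,\sigma,k)$; as all the data respect the inversion involution, the statement descends to $\overline{GSt(A)}$.

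For a homotopy band $z=z_1\cdots z_r$ the argument is the same once the wrap-around piece is accounted for. By the definition of the band object, $M_{(\kappa_b(z),J)}$ is the class in $\barCM(A^\ltimes)$ of $\bigl(\bigoplus_{i=0}^{r-1}P_{x_i}^{\oplus n},\ \sum_{i=1}^{r-1}\varphi_{z_i}I_n+\varphi_{z_r}J\bigr)$, again an object of $\CMrad(A^\ltimes)$. Choose $Q\in GL_n(k)$ with $Q^{-1}JQ$ equal to the companion matrix $C$ of the characteristic polynomial of $J$ — possible because $(k^n,J)$, being indecomposable, is a cyclic $k[x,x^{-1}]$-module, so $J$ is conjugate to that companion matrix — and conjugate the above pair by $\id\otimes Q$; this leaves the copies of $I_n$ unchanged and replaces $J$ by $C$, producing an isomorphic object. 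Applying $G$ and unravelling the definition of $B_{\gamma_b(z),J}$, one finds, exactly as in the string case, that every block away from the one attached to the last letter $\gamma(z_r)$ is a string-type $\{0,1\}$-block tensored with $I_n$, while the block attached to $\gamma(z_r)$ is the corresponding $\{0,1\}$-block tensored with $C$ — which is precisely what the third through sixth clauses of the formula defining $B_{w,J}$ assemble. Hence $G(M_{(\kappa_b(z),J)})\simeq B_{\gamma_b(z),J}$, compatibly with the rotation and inversion equivalences, and the statement descends to $\overline{GBa(A)}$. The only genuine content of all this is the bookkeeping behind the word ``exactly'': aligning the labelling of the copies of each $P_j$ with the basis of $V_{\gamma(y)}$, and matching the direct/inverse dichotomy of the pieces with the index-raising/index-lowering dichotomy through the transpose in the definition of $G$ — the same verification as in \cite{BM}; the band case adds only the standard observation that a cyclic matrix is conjugate to the companion matrix of its characteristic polynomial.
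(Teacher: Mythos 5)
Your proof is correct and follows essentially the same route as the paper's: decompose the homotopy string into letters, identify the summands $P_{v_i}$ of $M_{\kappa(y)}$ with the basis vectors $v_i$ in the definition of $B_{\gamma(y)}$ via the target vertices, and check block by block that the contribution of each letter under $G$ (after the transpose built into $G$) matches the arrow of $Q(\Ycat)$ prescribed by $\gamma$. You are more explicit than the paper on two points it glosses over: you note that $\operatorname{Im}(\varphi)\subset\operatorname{rad}(P)$ so that $G$ is applicable to this representative without further normalization, and in the band case you conjugate $J$ to the companion matrix $C$ of its characteristic polynomial before comparing with $B_{\gamma_b(z),J}$; the paper's proof simply asserts the multiplicity of $f_r$ is ``the companion matrix of $J$'' whereas Definition 2.3.3 places $J$ itself there, and the identification is only up to precisely the conjugation you supply. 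These are useful clarifications of the same bookkeeping argument, not a genuinely different approach.
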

	
	\begin{proof}
		Let $y = y_1 \dots y_r$ be in $\overline{GSt(A)}$, define $v_i = t(y_i)$ and $v_0 = s(y_1)$. Then $M_{\kappa(y)} = (\bigoplus_{i=0}^r P_{v_i}, \varphi)$ with $\varphi = \sum_{i=1}^r f_i$ where $f_i$ is the map induced by the path (or antipath) $y_i$. The multiplicity of these morphisms is always one.
		
		We have $\gamma(y_i) = (\hat{y_i},\hat{y_i} y_i)$ if $y_i$ is a path and $\gamma(y_i) = (\hat{y_i^{-1}} y_i^{-1},\hat{y_i^{-1}})$ if it is an antipath. The contribution of $\gamma(y_i)$ in $B_{\gamma(y)}$ appears in the block $(B_{\gamma(y)})_{\hat{y_i}}^{\hat{y_i}y_i}$ or $(B_{\gamma(y)})_{\hat{y_i^{-1}}y_i^{-1}}^{\hat{y_i^{-1}}}$ depending on if it is a path or an antipath. This block is the one corresponding to the path $y_i$ (if it is a path) or $y_i^{-1}$ (if $y_i$ is an antipath).
		
		This contribution is exactly the one of the $f_i$ in $G(M_{\kappa(y)})$, meaning that we do have $G(M_{\kappa(y)}) = B_{\gamma(y)}$.
		
		If $y = y_1 \dots y_r$ is in $\overline{GBa(A)}$, we can use the same argument, with the following difference : the multiplicities of the $f_i$ are not all one. If $i \neq r$, then the multiplicity is the $d\times d$ identity matrix and for $f_r$, the multiplicity is given by companion matrix of $J$. These multiplicites are exactly what is described in the description of $B_{\gamma(y),F}$. 
		
	\end{proof}
	
	\begin{lemma}
		If $\gamma$ and $\gamma_b$ are as in lemma 3.3.7 then we have
		\[\Im(\gamma) = \{w \in \overline{St(\Ycat,\sigma)} | B_w \in \Im(G)\}\]
		and 
		\[\Im(\gamma_b) = \{w \in \overline{Ba(\Ycat,\sigma)} | B_{w,1} \in \Im(G)\}\]		
	\end{lemma}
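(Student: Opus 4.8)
The plan is to prove each equality by a double inclusion. The inclusion $\Im(\gamma)\subseteq\{w:B_w\in\Im(G)\}$ (and its band analogue) will be immediate from the compatibility lemma just proved, while the reverse inclusion will be extracted from the very restrictive shape of the matrices in the image of $G$. For the first inclusion: if $w=\gamma(y)$ with $y\in\overline{GSt(A)}$, the preceding lemma gives $B_w=B_{\gamma(y)}\simeq G(M_{\kappa(y)})$; the differential of $M_{\kappa(y)}$ is, by construction, a sum of the morphisms attached to arcs of $\Delta$ in minimal position, each of which is a radical morphism between indecomposable projectives, so its image lies in the radical and $M_{\kappa(y)}\in\CMrad(A^\ltimes)$, whence $B_w\in\Im(G)$. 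Running the same argument with $\gamma_b$, $\kappa_b$ and the module $(k,1)$ yields $\Im(\gamma_b)\subseteq\{w:B_{w,1}\in\Im(G)\}$.

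For the reverse inclusion, let $w=w_1\dots w_n$ be a $\Ycat$-string with $B_w\in\Im(G)$; by Proposition 3.3.5 (and the functor $H$ constructed there) we may assume $B_w=G(M)$ for some $M=(P,\varphi)\in\CMrad(A^\ltimes)$. By the definition of $G$ on objects, a block $G(M)_u^v$ can be non-zero only when $v=u\alpha$ for a non-trivial path $\alpha$ of $A$, and then necessarily $u=\hat\alpha$ (in the notation $\tilde\alpha=\hat\alpha\,\alpha\,\bar\alpha$ for the maximal path through $\alpha$). On the other hand, for each $i$ the edge $w_i=(a_i,b_i)$ forces $(B_w)_{a_i}^{b_i}\neq 0$, since this block records the assignment $v_{i-1}\mapsto v_i$ in the definition of the maps $V_w(x,y)$. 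Comparing, $b_i=a_i\alpha_i$ with $\alpha_i$ a non-trivial path and $a_i=\hat{\alpha_i}$; thus every edge of $w$ has the form $(\hat\alpha,\hat\alpha\alpha)$ or its reverse. Setting $y_i:=\alpha_i$ when $w_i=(\hat{\alpha_i},\hat{\alpha_i}\alpha_i)$ and $y_i:=\alpha_i^{-1}$ otherwise, one then checks — by exactly the four-case analysis (direct--direct, direct--inverse, inverse--direct, inverse--inverse) used in the proof of the lemma defining $\gamma$ — that the condition $t(w_i)=s(w_{i+1})$ says precisely that $y=y_1\dots y_n$ is a walk, and that $p_2(w_i)\neq p_1(w_{i+1})$ says precisely that $y$ satisfies the defining conditions of a reduced finite homotopy string (consecutive same-type blocks composing inside $I$, no backtracking at the mixed junctures). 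Hence $y\in\overline{GSt(A)}$ and $\gamma(y)=w$, i.e. $w\in\Im(\gamma)$.

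The band case is identical: for the module $(k,1)$ the block of $B_{w,1}$ attached to the wrap-around edge $w_n$ is the $1\times1$ companion matrix of $t-1$, still non-zero, so the same edge-by-edge comparison applies and every edge of the $\Ycat$-band $w$ is again of the form $(\hat\alpha,\hat\alpha\alpha)$ or its reverse; the reconstructed walk closes up and is not a proper power because $w$ is not, hence is a reduced homotopy band carried onto $w$ by $\gamma_b$. I expect the only genuinely delicate point to be the combinatorial dictionary invoked at the end of the second paragraph: one must verify, case by case, that $p_2(w_i)=p_1(w_{i+1})$ occurs exactly when the two consecutive arc-paths $\alpha_i,\alpha_{i+1}$ (or their inverses) would concatenate to a path lying outside $I$, which is where the bound of two on the number of paths of $\Ycat$ over a fixed vertex is used — precisely as in the proof of the lemma defining $\gamma$. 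The block-theoretic part is by contrast just careful bookkeeping with the conventions of Section~3.1 and the explicit description of $G$.
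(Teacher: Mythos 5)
Your proof follows the same strategy as the paper's: the easy inclusion is read off from the compatibility lemma $G(M_{\kappa(y)}) \simeq B_{\gamma(y)}$, and for the reverse inclusion you use the rigid block structure of matrices in $\Im(G)$ to force every edge of $w$ into the form $(\hat\alpha,\hat\alpha\alpha)$ or its reverse, then reconstruct $y$ and verify it is a reduced homotopy string (or band) by the same four-case analysis, invoking Lemma 3.3.1 (at most two elements of $\Ycat$ over a fixed target) as the paper does. The only difference is stylistic: you cite the earlier four-case analysis rather than rewriting its mirror image, whereas the paper repeats the computation.
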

	
	\begin{proof}
		In this section we will first show :
		\[\Im(\gamma) = \{w \in St(\Ycat,\sigma) | B_w \in \Im(G)\}\]
		
		First, we show the inclusion from right to left.
		\\
		Let $w  \in St(\Ycat,\sigma)$ and $w = w_1 \dots w_n$ a decomposition of the string, such that $B_w \in \Im(G)$.
		\\
		If $B_w$ is in the image of $G$ then all non-zero blocks are in position $(B_w)_u^v$ with $v = u\alpha$. Therefore, by definition of $B_w$, for all $i \in \{1,\dots,n\}$, there exists a non-trivial path $x_i \in \Pcat(A)$ such that $w_i = (\hat{x_i},\hat{x_i}x_i)$ (1) or $w_i = (\hat{x_i}x_i,\hat{x_i})$ (2). We then define :
		\[y_i = \begin{cases}
			x_i &\; \textup{in the case (1)}\\
			x_i^{-1} &\; \textup{in the case (2)}\\
		\end{cases}\]
		
		We now have $w_i = (\hat{y_i},\hat{y_i}y_i)$ or $w_i = (\hat{y_i^{-1}}y_i^{-1},\hat{y_i^{-1}})$. Define $y = y_1\dots y_n$, we will now show that $y \in GSt(A)$.
		
		Let $i \in \{1,\dots,n-1\}$, it is sufficient to show that $y_i y_{i+1} \in GSt(A)$. There are four cases :
		\begin{itemize}
			\item If $y_i$ and $y_{i+1}$ are both paths, we have to show that they compose and that $y_i y_{i+1} \in I$. In this case, we have $w_i = (\hat{y_i},\hat{y_i}y_i)$ and $w_{i+1} = (\hat{y_{i+1}},\hat{y_{i+1}}y_{i+1})$.
			
			As $t(w_i) = s(w_{i+1})$, we have $[\hat{y_i}y_i]_\sigma = [\hat{y_{i+1}}]_\sigma$ i.e. $t(y_i) = t(\hat{y_{i+1}}) = s(y_i)$.
			
			As $p_2(w_i) \neq p_1(w_{i+1})$ we have $\hat{y_i}y_i \neq \hat{y_{i+1}}$. By definition $\hat{y_{i+1}}y_{i+1} \neq 0$, we  have $\hat{y_i}y_i y_{i+1} = 0$ because $A$ is gentle. Thus $y_i y_{i+1} = 0$ by definition of $\hat{y_i}$. 
			\item If $y_i$ and $y_{i+1}$ are antipaths, we use analog arguments to the previous case.
			\item If $y_i$ is a path and, $y_{i+1}$ is an antipath, then $w_i = (\hat{y_i},\hat{y_i}y_i)$ and $w_{i+1} =  (\hat{y_{i+1}^{-1}}y_{i+1}^{-1}, \hat{y_{i+1}^{-1}})$. 
			
			In this case, $[\hat{y_i}y_i]_\sigma = t(w_i) = s(w_{i+1}) = [\hat{y_{i+1}^{-1}}y_{i+1}^{-1}]_\sigma$. This means that $t(y_i) = t(y_{i+1}^{-1}) = s(y_{i+1})$. 
			
			As $p_2(w_i) \neq p_1(w_{i+1})$ we have $\hat{y_i}y_i \neq \hat{y_{i+1}^{-1}} y_{i+1}^{-1}$. Because they have the same target, by uniqueness of maximal paths, $y_i$ and $y_{i+1}^{-1}$ do not share any arrow and thus $y_i y_{i+1} \in \overline{GSt(A)}$.
			\item If $y_i$ is an antipath and $y_{i+1}$ is a path, we use the same arguments as in the previous case.
		\end{itemize}
		Thus we have found $y \in GSt(A)$ such that $w = \gamma(y)$. This gives the inclusion :
		\[  \{w \in \overline{St(\Ycat,\sigma)} | B_w \in \Im(G)\} \subset \Im(\gamma) \]
		To show the other inclusion, it is sufficient to see that if $y \in \overline{GSt(A)}$, we have
		\[ G(M_{\kappa(y)}) \simeq B_{\gamma(y)} \]
		
		We now show the following characterization for the image of $\gamma_b$ :
		\[\Im(\gamma_b) = \{w \in \overline{Ba(\Ycat,\sigma)} | B_{w,1} \in \Im(G)\}\]
		The arguments for the inclusion form right to left are exactly the same than for $\Im(\gamma)$ and the argument for the other inclusion is that if $y \in \overline{GBa(A)}$ then :
		\[G(M_{\kappa_b(y),1}) \simeq B_{\gamma_b(y),1}\]
	\end{proof}
	
	We now have the necessary tools to prove theorem 3.3.6.
	
	\begin{proof}{\textit{of Theorem 3.3.6}}
		
		Let $M$ be an indecomposable object in $\CMrad(A)$, then $G(M)$ is an indecomposable object in $\Scat(\Ycat,\sigma,k)$. There are two possibilities by Theorem 3.1.11. :
		\begin{itemize}
			\item There exists $w \in \overline{St(\Ycat,\sigma)}$ such that $G(M) \simeq B_w$. Then by Lemma 3.2.10., there exists $y \in \overline{GSt(A)}$ such that $w = \gamma(y)$. Thus by Lemma 3.2.9., $G(M) \simeq G(M_{\kappa(y)})$. However, by Proposition 3.2.5. this isomorphism can be seen as an isomorphism in $\Im(G)$, meaning that $M \simeq M_{\kappa(y)}$.
			\item There exists $w \in \overline{Ba(\Ycat,\sigma)}$ and $(k^n,J)$ an indecomposable $k[x,x^{-1}]$-module such that $G(M) \simeq B_{w,J}$. Thus by Lemma 3.2.10. there exists $y \in \overline{GBa(A)}$ such that $w = \gamma_b(y)$ and $G(M) \simeq B_{\gamma_b(y), f} \simeq G(M_{\kappa_b(y),f})$. By the same argument as above, we have $M \simeq M_{\kappa_b(y),f}$.
		\end{itemize}
		
	\end{proof}
	
	\bibliographystyle{alpha}
	\bibliography{biblio}
	
	\begin{center}
		\textbf{Joseph Winspeare}
		\\ Address : Institut Fourier, 100 rue des maths, 38610 Gières, Université Grenoble Alpes
		\\ email : joseph.winspeare@univ-grenoble-alpes.fr
	\end{center}
	
	\end{document}